 \newtheorem{thm}{Theorem}[section]
 \newtheorem{lem}[thm]{Lemma}
 \newtheorem*{thm*}{Theorem}
 \newtheorem{cor}[thm]{Corollary}
 \newtheorem{prop}[thm]{Proposition}
 \theoremstyle{definition}
  \newtheorem{defin}{Definition}[section]
 \newtheorem*{exp*}{Example}
 \newtheorem{ex}{Example}[section]
 \newtheorem{rem}{Remark}[section]
  \newtheorem*{rem*}{Remark}
\def\ggr{\textrm{\emph{gr}}}
\def\C{{\mathbb C}}
\def\N{{\mathbb N}}
\def\R{{\mathbb R}}
\def\Z{{\mathbb Z}}
\def\K{{\mathbb K}}
\def\E{{\mathbb E}}
\def\V{{\mathbb V}}
\def\F{{\mathbb F}}
\def\g{{\mathfrak g}}
\def\p{{\mathfrak p}}
\def\h{{\mathfrak h}}
\def\b{{\mathfrak b}}
\def\n{{\mathfrak n}}
\def\z{{\mathfrak z}}
\def\s{{\mathfrak s}}
\def\G{{\mathcal G}}
\def\U{{\mathcal U}}
\def\T{{\mathcal T}}
\def\I{{\mathcal I}}
\def\J{{\mathcal J}}
\def\<{\langle}
\def\gr{{\textrm{gr}}}
\def\im{{\textrm{im}}}
\title{Prolongation on regular infinitesimal flag manifolds}
\author{Katharina Neusser}
\address{Katharina Neusser, Mathematical Sciences Institute, Australian National University, ACT 0200, Australia}
\email{katharina.neusser@anu.edu.au}
\subjclass[2000]{Primary: 35N10, 58J60 ,58A20, 58A30; Secondary: 53D10, 53A40, 22E46;}
\keywords{Overdetermined systems, prolongation, filtered manifolds, contact manifolds, weighted jet bundles, regular infinitesimal flag structures, parabolic geometries}
\begin{document}
\maketitle

\begin{abstract}
Many interesting geometric structures can be described as regular infinitesimal flag structures, which occur as the underlying structures of parabolic geometries. Among these structures we have for instance conformal structures, contact structures, certain types of generic distributions and partially integrable almost CR-structures of hypersurface type. The aim of this article is to develop for a large class of (semi-) linear overdetermined systems of partial differential equations on regular infinitesimal flag manifolds $M$ a conceptual method to rewrite these systems as systems of the form $\tilde\nabla(\Sigma)+C(\Sigma)=0$, where $\tilde\nabla$ is a linear connection on some vector bundle $V$ over $M$ and $C: V\rightarrow T^*M\otimes V$ is a (vector) bundle map. In particular, if the overdetermined system is linear, $\tilde\nabla+C$ will be a linear connection on $V$ and hence the dimension of its solution space is bounded by the rank of $V$. We will see that the rank of $V$ can be easily computed using representation theory.
\end{abstract}

\section{Introduction}
Given some overdetermined system of linear partial differential equations on a manifold $M$, one can ask the question whether this system can be rewritten as a first order system in closed form, meaning that all first order partial derivatives of the dependent variables are expressed in the dependent variables themselves. To prolong an overdetermined system in this way actually demands to introduce new variables for certain unknown higher partial derivatives until all first order partial derivatives of all the variables can be obtained as differential consequences of the original system of equations. This can be rephrased as the need to construct a vector bundle $V$ over $M$ and a linear connection on $V$ such that its parallel sections correspond bijectively to solutions of the original system of equations, see \cite{BCEG}. 
Having rewritten a system of linear differential equations in this way gives considerable information about the system. Namely, it implies that the dimension of the solution space is bounded by the rank of $V$ and by looking at the curvature of the linear connection and its covariant derivatives one may derive obstructions to the existence of solutions. In \cite{Spencer} Spencer studies a class of systems of linear differential equations, namely systems of so called finite type.
For a system of differential equations of finite type it can be shown that a solution is already determined by a finite jet in a single point. Hence this is a class of systems of differential equations, for which one can expect such a rewriting procedure to exist. However, even for simple differential equations, where one maybe is also able to check easily that they are of finite type, to rewrite them as a first order closed systems can become quite involved, see \cite{BCEG} and \cite{E}. 
Also, there is in general no conceptual method telling one how to proceed. 
\\In \cite{BCEG} Branson, \v Cap, Eastwood and Gover developed such a method for a huge class of overdetermined systems of finite type on manifolds endowed with an almost hermitian symmetric structure. Our aim is to generalise this prolongation procedure to a broader class of geometric structures.
\\Suppose that $M$ is a manifold endowed with the geometric structure of a filtered manifold, meaning that its tangent bundle $TM$ is filtered by vector subbundles $TM=T^{-k}M\supset ...\supset T^{-1}M$ such that the tangential filtration is compatible with the Lie bracket of vector fields. Then the Lie bracket of vector fields induces a tensorial bracket on the associated graded vector bundle $\gr(TM)=\bigoplus_{i=1}^kT^{-i}M/T^{-i+1}M$, making each fiber $\gr(T_xM)$ over a point $x\in M$ into a nilpotent graded Lie algebra, which should be seen as the first order approximation to the filtered manifold at $x\in M$, replacing the role of the tangent space for ordinary manifolds. 
\\One of the best studied examples of a filtered manifold is the case, where $M$ is a contact manifold $T^{-1}M\subset TM$, i.e. $\gr(T_xM)$ is a Heisenberg Lie algebra. 
Studying analytic properties of differential operators on a contact manifold $M$, it was already observed in the 70's and 80's of the last century that the notion of order for differential operators on $M$ should be better changed and adapted to the contact structure by considering a derivative in direction of a vector field transversal to the contact subbundle $T^{-1}M$ as a differential operator of order two rather than one. Adjusting the notion of order in this way leads then to a notion of weighted symbol for differential operators on $M$, which naturally fits together with the contact structure and can be viewed, in contrast to the usual principal symbol, as the 'principal part' of a differential operator on $M$, see \cite{BG} and \cite{Tay}. 
\\Independently of these developments in contact geometry, Morimoto  started in the 90's to study differential equations on general filtered manifolds and developed a formal theory, see \cite{Morimoto1}, \cite{Morimoto2} and  \cite{Morimoto3}.  By adjusting the notion of order of differentiation to the filtration of a filtered manifold, he introduced a concept of weighted jet bundles and suggested it as a convenient framework to investigate differential operators between sections of vector bundles over a filtered manifold.
\\Studying the problem of prolongation of differential equations on a filtered manifold $M$, it turns out that there exists a lot of examples of linear differential equations, for which a solution is already determined by finitely many partial derivatives in a single point, but which are not of finite type in the classical sense of Spencer. This indicates that prolongation of differential equations on filtered manifolds should be studied within the framework of weighted jet bundles and the notion of finite type should be adjusted to the weighted setting, see also \cite{N} and \cite{thesis}. 
\\For a semisimple Lie group $G$ and a parabolic subgroup $P$ a regular infinitesimal flag manifold of type $(G,P)$ is a certain type of filtered manifold $M$ together with some reduction of the structure group of the frame bundle of $\gr(TM)$ to the Levi subgroup $G_0$ of $P$. These geometric structures occur as underlying structures of parabolic geometries, which have been intensively studied in the last decades, see \cite{CSbook}. In this article we shall study a broad class of semi-linear overdetermined systems on regular infinitesimal flag manifolds by working within the setting of weighted jet bundles and we will establish an explicit prolongation procedure for them. If the tangential filtration of the regular infinitesimal flag structure is trivial, it is an almost hermitian symmetric structure and our method will recover the one in \cite{BCEG}.
\\Having introduced the concept of weighted jet bundles and the necessary background on regular infinitesimal flag structures, we will first consider semi-linear systems on regular infinitesimal flag manifolds $M$, where $G_0$ has one dimensional center.  We will show how for a huge class of such systems, which will turn out to be of weighted finite type, one can construct a linear connection $\tilde\nabla$ on some vector bundle $V$ and a bundle map $C: V\rightarrow T^*M\otimes V $ such that solutions of $\tilde\nabla(\Sigma)+C(\Sigma)=0$ correspond to solutions of the studied system. If the system is linear, $C$ will be a vector bundle map and we will obtain a correspondence of solutions of the system in question and parallel sections of the linear connection $\tilde\nabla+C$.
Further, we will apply our results to the case of contact manifolds, which will lead to an alternative prolongation method as the one developed by Eastwood and Gover in \cite{EG}.
Finally, we will discuss in the last part of this article the case of general regular infinitesimal flag manifolds and the changes with respect to the results in the case of $G_0$ having one dimensional center.
\\\\\textbf{Acknowledgments}
I would like to thank Andreas \v Cap for several helpful discussions and suggestions. Also I am grateful to Michael Eastwood and Tohru Morimoto for valuable conversations.  
This work was supported by the Initiativkolleg IK-1008 of the University of Vienna and by the project P 19500-N13 of the "Fonds zur F\"orderung der wissenschaftlichen Forschung" (FWF).

\section{Filtered manifolds and weighted jet bundles}\label{sectionweighted}

In this section we introduce the notion of a filtered manifold and discuss the concept of weighted jet bundles over filtered manifolds as it was introduced by Morimoto, see especially \cite{Morimoto3}.

\subsection{Filtered manifolds}\label{filteredmanifold}

We start by collecting some basic facts about filtered manifolds.

\begin{defin} 
A \textit{filtered manifold} is a smooth manifold $M$ together with a filtration of its tangent bundle $TM$
by vector subbundles $\{T^iM\}_{i\in\Z}$ such that:
\begin{itemize}
\item $T^iM\supseteq T^{i+1}M$
\item $T^0M=0$ and there exists $\ell\in\N$ with $T^{-\ell}M=TM$
\item for sections $\xi\in\Gamma(T^iM)$ and $\eta\in\Gamma(T^jM)$ the Lie bracket $[\xi,\eta]$ is a section of $T^{i+j}M$
\end{itemize}
\end{defin}

By the first two properties, the tangential filtration can be written as
$$TM=T^{-k}M\supsetneq T^{-k+1}M\supsetneq...\supsetneq T^{-1}M$$ for some
$k\in\N$ with $T^iM=TM$ for $i\leq -k$ and $T^iM=0$ for $i\geq 0$.
The number $k\in \N$ is called the \textit{depth} of the filtered manifold. 
In the sequel we will always suppose that a filtered manifold is given in this form.
\\Given a filtered manifold $M$ of depth $k$, one can form the \textit{associated graded vector bundle} $\textrm{gr}(TM)$ 
to the filtered vector bundle $TM$, which is defined as
$$\textrm{gr}(TM)=\bigoplus_{i\in\Z}\textrm{gr}_{i}(TM)=\bigoplus_{i={-k}}^{-1}\textrm{gr}_{i}(TM),$$
where $\textrm{gr}_{i}(TM)=T^{i}M/T^{i+1}M$. 
\\Now consider the operator $\Gamma(T^iM)\times\Gamma(T^jM)\rightarrow \Gamma(\textrm{gr}_{i+j}(TM))$
given by
$(\xi,\eta)\mapsto q_{i+j}([\xi,\eta])$, where $q_{i+j}: T^{i+j}M\rightarrow\textrm{gr}_{i+j}(TM)$ is the natural projection. One verifies directly that this operator is bilinear over smooth
functions and therefore it is induced by a bilinear bundle map $T^iM\times
T^jM\rightarrow \textrm{gr}_{i+j}(TM)$. Moreover, it obviously
factorises to a bundle map
$\textrm{gr}_i(TM)\times\textrm{gr}_j(TM)\rightarrow\textrm{gr}_{i+j}(TM),$
since for $\xi\in\Gamma(T^{i+1}M)$ or $\eta \in\Gamma(T^{j+1}M)$  we have
$[\xi,\eta]\in\Gamma(T^{i+j+1}M)$. Hence we obtain a bilinear vector bundle map on the associated graded bundle
$$\mathcal L :\textrm{gr}(TM)\times\textrm{gr}(TM)\rightarrow\textrm{gr}(TM),$$
which makes the fiber
$\textrm{gr}(T_xM)$ over $x\in M$ into a nilpotent graded Lie algebra:
$$\mathcal L_x(\gr_i(T_xM),\gr_j(T_xM))\subset \gr_{i+j}(T_xM).$$ 

\begin{defin}
Let $M$ be a filtered manifold.
\begin{enumerate}
\item The tensorial bracket $\mathcal L :\textrm{gr}(TM)\times\textrm{gr}(TM)\rightarrow\textrm{gr}(TM)$ induced from the Lie bracket of vector fields on
$\gr(TM)$ is called the \textit{Levi bracket}.
\item The nilpotent graded Lie algebra $(\gr(T_xM),\mathcal L_x)$ is called the \textit{symbol algebra} of the
filtered manifold $M$ at the point $x\in M$.
\end{enumerate}
\end{defin}

Suppose $M$ and $N$ are filtered manifold and let $f:M\rightarrow N$ be a \textit{local isomorphism of filtered manifolds}, 
i.e. a local diffeomorphism, whose tangent map $Tf$ satisfies $Tf(T^iM)=T^iN$ for all $i\in \Z$.
Then for each point $x\in M$ the tangent map $T_xf$ at $x$ induces an
isomorphism of graded vector spaces between $\textrm{gr}(T_xM)$ and
$\textrm{gr}(T_{f(x)}N)$ and the compatibility of the pullback of vector fields with the Lie
bracket easily implies that this actually is an isomorphism of
graded Lie algebras. Therefore the symbol algebras are basic invariants one can associate to a filtered manifold.
\\The symbol algebra of a filtered manifold may change from point to point and so $\gr(TM)$ needs not to be locally trivial as a vector bundle of Lie algebras.
Note that, if $\n=\n_{-k}\oplus...\oplus \n_{-1}$ is a nilpotent graded Lie algebra and $M$ a filtered manifold such that the symbol algebra in each point is isomorphic to $\n$,
one has a natural notion of a \textit{frame bundle of the associated graded bundle}
$\mathcal P(\textrm{gr}(TM))$. Denoting by $\mathcal{ P}_x(\textrm{gr}(TM))$ the
space of all graded Lie algebra isomorphisms $\mathfrak
n\rightarrow \textrm{gr}(T_xM)$, the frame bundle is defined by the
disjoint union 
\begin{equation}\label{frame}
\mathcal P(\textrm{gr}(TM)):=\sqcup_{x\in
M}\mathcal{ P}_x(\textrm{gr}(TM)).
\end{equation}  
The bundle $\mathcal
P(\textrm{gr}(TM))$ is a principal bundle with structure group
$\textrm{Aut}_{\textrm{gr}}(\mathfrak n)$, the group of all Lie
algebra automorphisms of $\mathfrak n$ that preserve
the grading. 

\begin{rem}
Any ordinary smooth manifold can be seen as a trivial filtered manifold $TM=T^{-1}M$. 
The symbol algebra $\gr(T_xM)$ at $x\in M$ is then just the tangent space $T_xM$ viewed as an abelian Lie algebra.
\end{rem}

One of the best studied examples of a non-trivial filtered manifold is a contact manifold: 

\begin{ex}\label{Contactex1}
A \textit{contact manifold} is a manifold $M$ of dimension $2n+1$ together with a vector subbundle $H\subset TM$ of rank $2n$ such that in each point $x\in M$ the Levi bracket
$\mathcal L_x: H_x\times H_x\rightarrow T_xM/H_x$ is non-degenerate. Hence a contact manifold $(M,H)$ of dimension $2n+1$ is a filtered manifold $H=:T^{-1}M\subset T^{-2}M=TM$, whose symbol algebra in each point is isomorphic to the Heisenberg Lie algebra of dimension $2n+1$. 
\end{ex}

In Section \ref{RegInfFlag} we will see several other interesting examples of filtered manifolds.

\subsection{The weighted order of a differential operator}\label{WeightedOrder}

As indicated in the introduction we now adapt the notion of order for differential operators on filtered manifolds  with respect to the filtration of the tangent bundle.

\begin{defin} \label{defweighted}
Let $M$ be a filtered manifold.

\begin{enumerate}

\item {A local vector field $\xi$ of $M$ is of weighted order $\leq r$, if $\xi$ is a local section of $T^{-r}M$.
The smallest number $r\in\N_0$ such that this holds is called the \textit{weighted order} $\textrm{ord}(\xi)$ of $\xi$.}

\item {A linear differential operator $D:C^{\infty}(M,\mathbb C)\rightarrow C^{\infty}(M,\mathbb C)$ on $M$
is of weighted order $\leq r$, if for each point $x\in M$ there exists a local frame $\{X_1,...,X_n\}$ of $TM$ defined on an open neighbourhood $U\subset M$ of $x$  such that
$$D|_U=\sum_{\alpha\in\N_0^n} a_\alpha X_1^{\alpha_1}...X_n^{\alpha_n}\quad\textrm{ with }\quad a_\alpha\in C^\infty(U,\C)$$
where for all non zero terms in this sum $\sum_{i=1}^n \alpha_i\textrm{ord}(X_i)\leq r$.
The smallest number $r\in\N_0$  such that this holds is called the \textit{weighted order} of $D$. }

\end{enumerate}

\end{defin}

Suppose that $M$ is a filtered manifold of depth $k$. Choosing an open subset of $M$, over which all subbundles $T^{-\ell}M$ of the tangent bundle trivialise, one can always construct  an \textit{adapted local frame} of $TM$, i.e. a local frame 

\begin{equation}\label{adaptedframe}
\{X_{1,1},..., X_{1, i(1)},...,X_{k,1},...,X_{k, i(k)}\}
\end{equation}
such that  $\{X_{1,1},..., X_{1, i(1)},...,X_{\ell,1},...,X_{\ell, i(\ell)}\}$ is a local frame of $T^{-\ell}M$ for all $\ell \leq k$. Using that the filtration of the tangent bundle is compatible with the Lie bracket of vector fields and that vector fields act as derivations on the algebra of smooth functions, one shows directly that the following proposition holds, for a proof see \cite{thesis}.

\begin{prop}
 A linear differential operator $D$ on $M$ is of weighted order $r$ if and only if the following two conditions hold:
\begin{enumerate}
\item for each point $x\in M$ there exists an adapted local frame of $TM$ defined on some open neighbourhood $U_x$ of $x$ such that
 \begin{equation}\label{D}
 D|_{U_x}=\sum_{|\alpha|\leq r} a_{\alpha} X_{1,1}^{\alpha_{1,1}}...X_{k, i(k)}^{\alpha_{k,i(k)}} \quad\textrm{ with }\quad a_\alpha\in C^\infty(U_x,\C),
 \end{equation}
where $\alpha=(\alpha_{1,1},...,\alpha_{k, i(k)})\in\N_0^n$ is a multi-index with
$|\alpha|:=\sum_{j=1}^k \sum_{\ell=1}^{i(j)}j\alpha_{j,\ell}$ 
\item there exists at least one point $x_0\in M$ such that the local description (\ref{D}) of $D|_{U_{x_0}}$ satisfies that $a_\alpha(x_0)\neq 0$ for some $\alpha$ with $|\alpha|=r$. 
\end{enumerate}
If $D$ is a linear differential operator of weighted order $r$, then $D$ is for any choice of local adapted frames of $TM$ of this form.
\end{prop}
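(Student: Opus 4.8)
The plan is to reduce the statement to two bookkeeping lemmas about monomials in local frame vector fields — one on reordering the factors and one on changing the frame — and then to combine these with the elementary fact that the coefficients of a linear differential operator with respect to a fixed ordered local frame are uniquely determined. Together these will show that the weighted order of $D$ equals $\sup_{x\in M}\max\{|\alpha|:a_\alpha(x)\neq 0\}$, the maximum being read off from the expansion of $D$ in any adapted local frame \eqref{adaptedframe} around $x$; conditions (1) and (2) as well as the frame-independence then follow at once.

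First I would record the reordering lemma. If $\xi\in\Gamma(T^{-a}M)$ and $\eta\in\Gamma(T^{-b}M)$ then, by compatibility of the filtration with the Lie bracket, $[\xi,\eta]\in\Gamma(T^{-a-b}M)$; applied to an adapted frame this says that $[X_{a,\ell},X_{b,m}]$ is a $C^\infty$-combination of frame fields $X_{j,\cdot}$ with $j\le a+b$. Hence every elementary swap $X_{a,\ell}X_{b,m}=X_{b,m}X_{a,\ell}+[X_{a,\ell},X_{b,m}]$ inside a monomial turns that monomial into a sum of monomials of the same or smaller weighted length, so the property that $D$ is locally a sum of monomials of weighted length $\le r$ in a given frame is independent of the order in which the frame fields are written. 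This will be used to bring the monomials produced in the next step into the standard order of \eqref{adaptedframe}.

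The main step is the change-of-frame lemma. Let $\{X_i\}$ be an arbitrary local frame near $x$ as in Definition \ref{defweighted}, so that $D=\sum a_\alpha X_1^{\alpha_1}\cdots X_n^{\alpha_n}$ with $\sum_i\alpha_i\,\textrm{ord}(X_i)\le r$, and let $\{X_{j,\ell}\}$ be an adapted frame on a possibly smaller common neighbourhood $U_x$. Writing $X_i=\sum_{j,\ell}f_i^{j,\ell}X_{j,\ell}$, the inclusion $X_i\in\Gamma(T^{-\textrm{ord}(X_i)}M)$ and the fact that the $X_{j,\ell}$ with $j\le\textrm{ord}(X_i)$ form a frame of $T^{-\textrm{ord}(X_i)}M$ force $f_i^{j,\ell}=0$ whenever $j>\textrm{ord}(X_i)$. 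Substituting these into a monomial, written as $X_{i_1}\cdots X_{i_N}$ with $N=\sum_i\alpha_i$, then expanding and normal-ordering — pushing all coefficient functions to the left by repeated use of $X_{j,\ell}\circ g=(X_{j,\ell}g)+g\circ X_{j,\ell}$ — expresses it as a $C^\infty$-combination of monomials in the $X_{j,\ell}$ in which every factor comes from the substitution of some $X_{i_p}$, and hence is an $X_{j,\ell}$ with $j\le\textrm{ord}(X_{i_p})$, while the non-surviving factors have been used to differentiate coefficients. Consequently each such monomial has weighted length at most $\sum_{p=1}^N\textrm{ord}(X_{i_p})=\sum_i\alpha_i\,\textrm{ord}(X_i)\le r$; reordering into standard form by the previous step preserves this bound, giving $D|_{U_x}=\sum_{|\alpha|\le r}a_\alpha X_{1,1}^{\alpha_{1,1}}\cdots X_{k,i(k)}^{\alpha_{k,i(k)}}$, which is (1). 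An adapted frame is in particular such an arbitrary frame, with $\textrm{ord}(X_{j,\ell})=j$ and hence $\sum_{j,\ell}\alpha_{j,\ell}\,\textrm{ord}(X_{j,\ell})=|\alpha|$, so running the argument with the two frames interchanged shows that a weighted-length-$\le r$ expansion in one — equivalently, by this symmetry, in any — adapted frame is itself a witness for Definition \ref{defweighted}. Combined with uniqueness of the expansion in a fixed adapted frame, this yields the equivalence: $D$ has weighted order $\le r$ if and only if, in every adapted frame around every point, the expansion of $D$ has $a_\alpha\equiv 0$ for all $|\alpha|>r$.

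Finally I would assemble the statement. The equivalence shows that the weighted order of $D$ is $\sup_x\max\{|\alpha|:a_\alpha(x)\neq 0\}$, computed in any adapted frame near $x$. If this equals $r$, then ``$\le r$'' gives (1) for every adapted frame, while ``not $\le r-1$'' yields a point $x_0$ and an adapted frame near it whose expansion has some $a_\alpha$ with $|\alpha|=r$ not identically zero; restricting to a neighbourhood $U_{x_0}$ of a point where $a_\alpha\neq 0$ and renaming it $x_0$ gives (2), and the independence of the adapted frame is the last assertion. Conversely, if (1) and (2) hold for some $r$, then (1) gives ``weighted order $\le r$'', and were it $\le r-1$ the equivalence would force the (by uniqueness, same) expansion in (2) near $x_0$ to have all coefficients at weighted length $r$ vanishing, contradicting (2); hence the weighted order is exactly $r$. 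I expect the change-of-frame lemma — in particular the weight accounting in the normal-ordering step, i.e. seeing that differentiating a coefficient along $X_{j,\ell}$ genuinely deletes that factor rather than merely lowering its order — to be the only real difficulty; the rest is formal manipulation together with the standard uniqueness of coefficients in a fixed frame.
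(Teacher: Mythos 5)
Your proposal is correct, and it follows exactly the route the paper indicates for this proposition (the paper itself only sketches it, citing compatibility of the filtration with the Lie bracket and the derivation property of vector fields, and defers the details to the author's thesis): the reordering and change-of-frame lemmas are precisely the "direct" verification alluded to, and the weight accounting in the normal-ordering step -- a differentiated coefficient deletes the corresponding frame factor, strictly lowering the weighted length -- is the one point that needs care, which you handle. Nothing is missing.
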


\subsection{The universal enveloping algebra of a nilpotent graded Lie algebra}\label{Univalg}

Suppose that $\n=\n_{-k}\oplus...\oplus\n_{-1}$ is a graded nilpotent Lie algebra and denote by $[$ , $]$ its Lie bracket. Recall that the universal enveloping algebra $\U(\n)$ of the Lie algebra $\n$ is defined as the following quotient $$\U(\n):=\T(\n)/\I,$$
where $\T(\n)$ is the tensor algebra of $\n$ and $\I$ the two-sided ideal generated by elements of the form $X\otimes Y-Y\otimes X-\mathcal [X, Y]$ for $X,Y\in \n$.
The grading on $\n$ induces an algebra grading on the tensor algebra given by

$$
\T(\n)=\bigoplus_{i=0}^\infty\T_{-i}(\n)
$$

$$\T_{-i}(\n)=\{\sum_j X_{j_1}\otimes...\otimes X_{j_{s(j)}}: X_{j_\ell}\in\n_{j_\ell}\textrm{ and } \sum_{\ell=1}^{s(j)} j_\ell=-i \quad\forall j\}.$$

Since $\n$ is a graded Lie algebra, the ideal $\I$ is homogeneous and hence the grading passes to an algebra grading on the universal enveloping algebra

\begin{equation}\label{weightedgrad2}
\mathcal{U}(\n)=\bigoplus_{i=0}^\infty \mathcal{U}_{-i}(\n).
\end{equation}

Denote by $\mathcal S(\n)$ the symmetric algebra of $\n$ and by $\mathfrak S_p$ the symmetric group on $p$ letters. Consider the linear map $\Psi:\mathcal S(\n)\rightarrow\U(\n)$ given by

\begin{equation}\label{symmetrisation1}
\Psi(X_1...X_p)=\frac{1}{p!}\sum_{\theta\in\mathfrak S_p}X_{\theta(1)}...X_{\theta(p)} \quad\quad X_1,...,X_p\in\n,
\end{equation}
where the product on the left side is taken in $\mathcal S(\n)$ and on the right side in $\U(\n)$. From the Poincar\'e-Birkhoff-Witt Theorem one deduces that this is a linear isomorphism, called the \textit{symmetrisation}, see e.g. \cite{Dixmier}. 
Denoting by $\Psi_{-j}$ the restriction of $\Psi$ to the symmetric algebra $\mathcal S(\n_{-j})$, one shows easily that also the map 

\begin{equation}\label{symmetrisation2}
\mathcal S(\n_{-1})\otimes...\otimes\mathcal S(\n_{-k})\rightarrow \U(\n) 
\end{equation}

\begin{equation*}
x_{-1}\otimes...\otimes x_{-k}\mapsto \Psi_{-1}(x_{-1})...\Psi_{-k}(x_{-k})
\end{equation*}

is a linear isomorphism. Note that this map restricts to a linear isomorphism 

\begin{equation}\label{symmetrisation3}
\mathcal S_{-i}(\n):=\bigoplus_{1i_1+...+ki_k=i}S^{i_1}\n_{-1}\otimes...\otimes S^{i_k}\n_{-k}\cong\U_{-i}(\n),
\end{equation}
where $S^{i_j}\n_{-j}$ denotes the symmetric tensors of degree $i_j$ of $\n_{-j}$.
\\In particular, for a filtered manifold $M$ the vector space given by the $-i$-th grading component $\U_{-i}(\gr(T_xM))$ of the universal enveloping algebra of the symbol algebra $(\gr(T_xM),\mathcal L_x)$ at $x\in M$ is by (\ref{symmetrisation3}) always isomorphic to $\mathcal S_{-i}(\gr(T_xM))$ and we therefore conclude that $$\U_{-i}(\gr(TM)):=\bigsqcup_{x\in M}\U_{-i}(\gr(T_xM))$$ can be natural endowed with the structure of a vector bundle over $M$.

\subsection{Weighted jet bundles}\label{WeightedJet}

Suppose that $\pi: E\rightarrow M$ is a complex or real vector bundle over $M$ and let $\Gamma_x(E)$ be the space of germs of smooth sections of $E$ at the point $x\in M$. 
For $r\in\N_0$ two sections $s,s'\in\Gamma_x(E)$ are called $r$-\textit{equivalent} $\sim_r$,
if  $$D(\langle\lambda,s-s'\rangle)(x)=0$$
for all linear differential operators $D$ on $M$ of weighted order $\leq r$ and all sections $\lambda$ of the dual bundle $E^*$, where
$\langle$  ,  $\rangle: \Gamma(E^*)\times\Gamma(E)\rightarrow C^\infty(M,\mathbb C)$ is the evaluation.

\begin{defin}
The quotient of $\Gamma_x(E)$ by the equivalence relation $\sim_r$ 
$$\mathcal J_x^r(E):=\Gamma_x(E)/\sim_r$$ 
is called \textit {the space of jets of weighted order} $r$ \textit{with source} $x\in M$. 
For $s\in\Gamma_x(E)$ we denote by $j^r_xs$ the class of $s$ in $\mathcal J^r_x(E)$.
\end{defin}

Since for $s<r$ the relation $\sim_s$ is coarser than the relation $\sim_r$, we have linear projections
$\pi^r_s: \J^r_x(E)\rightarrow\J^s_x(E)$  for  $s<r$. Moreover, we have the following proposition, see also \cite{Morimoto3}.

\begin{prop}\label{jetexact}
For $x\in M$ and $r\in\N$ we have an exact sequence of vector spaces
\begin{equation*}
\xymatrix{0\ar[r] & \mathcal{U}_{-r}(\ggr(T_xM))^*\otimes E_x\ar[r]^{\quad\quad\iota}& \mathcal J^r_x(E)\ar[r]^{\pi^r_{r-1}}& \mathcal J^{r-1}_x(E)\ar[r]& 0  \\ }
\end{equation*}
\end{prop}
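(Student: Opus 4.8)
The plan is to construct the three maps in the sequence explicitly and verify exactness at each of the three spots, the essential content being the identification of the kernel of $\pi^r_{r-1}$ with $\mathcal{U}_{-r}(\gr(T_xM))^*\otimes E_x$. First I would fix a local frame $\{e_1,\dots,e_m\}$ of $E$ near $x$ and an adapted local frame $\{X_{1,1},\dots,X_{k,i(k)}\}$ of $TM$ near $x$ as in \eqref{adaptedframe}, together with dual coframes. By the Proposition on weighted order, a germ $s\in\Gamma_x(E)$ is $r$-equivalent to zero precisely when, writing $s=\sum_a s^a e_a$, all the "weighted Taylor coefficients" $(X_{1,1}^{\alpha_{1,1}}\cdots X_{k,i(k)}^{\alpha_{k,i(k)}} s^a)(x)$ with $|\alpha|\le r$ vanish; this is just the definition of $\sim_r$ unwound via the local description \eqref{D} of differential operators of weighted order $\le r$, using that the iterated frame-vector-field operators span all such operators modulo lower weighted order.

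Next I would define $\pi^r_{r-1}$ as the already-available projection induced from $\sim_{r-1}$ being coarser than $\sim_r$; its surjectivity is immediate since every germ has a well-defined class in both quotients. For the injection $\iota$, given $\phi\in\mathcal{U}_{-r}(\gr(T_xM))^*$ and $v\in E_x$, I would use the symmetrisation isomorphism \eqref{symmetrisation3}, $\mathcal{U}_{-r}(\gr(T_xM))\cong\mathcal{S}_{-r}(\gr(T_xM))=\bigoplus_{1i_1+\dots+ki_k=r}S^{i_1}\n_{-1}\otimes\cdots\otimes S^{i_k}\n_{-k}$, to view $\phi$ as a functional on homogeneous weight-$r$ symmetric tensors built from the graded pieces; pairing $\phi\otimes v$ with the weighted $r$-jet of a section produces, via the adapted coframe, the element of $\mathcal{J}^r_x(E)$ whose only nonvanishing weighted Taylor coefficients are the top-weight ones prescribed by $\phi$. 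Concretely, $\iota(\phi\otimes v)$ is the class of a section whose components are polynomials in suitable weighted coordinates homogeneous of weight $r$. One checks $\iota$ is well-defined (independent of the choices, because changing the adapted frame changes the top-weight symbol by the $\gr$-preserving transition, under which \eqref{symmetrisation3} is natural) and injective (if all top-weight coefficients vanish then $\phi=0$).

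Then exactness in the middle: $\pi^r_{r-1}\circ\iota=0$ because an $r$-jet in the image of $\iota$ has vanishing weighted Taylor coefficients of weight $\le r-1$, hence vanishes under $\sim_{r-1}$; conversely, if $j^r_x s$ lies in $\ker\pi^r_{r-1}$, then all coefficients $(X_{1,1}^{\alpha_{1,1}}\cdots s^a)(x)$ with $|\alpha|\le r-1$ vanish, so $j^r_x s$ is determined by the coefficients with $|\alpha|=r$, which assemble — via \eqref{symmetrisation3} applied fibrewise, i.e. reading the degree-$(i_1,\dots,i_k)$ part in the respective $\n_{-j}$ directions — into a well-defined element of $\mathcal{U}_{-r}(\gr(T_xM))^*\otimes E_x$ mapping to $j^r_x s$ under $\iota$. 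The point where one must be careful — and I expect this to be the main obstacle — is verifying that the weight-$r$ coefficients genuinely transform as a tensor in $\mathcal{U}_{-r}(\gr(T_xM))^*$ rather than merely in $S_{-r}$ of the naive coordinate directions: the subtlety is that iterated derivatives $X_{j,\ell}X_{j',\ell'}$ and $X_{j',\ell'}X_{j,\ell}$ differ by $[X_{j,\ell},X_{j',\ell'}]$, which by the filtration compatibility lies in $T^{-(j+j')}M$ and hence is \emph{lower} weighted order, so on the weight-$r$ top symbol the ordering ambiguity is killed exactly by the relations defining $\mathcal{U}(\gr(T_xM))$ — this is precisely what makes the universal enveloping algebra of the \emph{symbol algebra}, and not the symmetric algebra, the correct object, and it is the identity one must check cleanly, invoking the symmetrisation \eqref{symmetrisation1}–\eqref{symmetrisation3} to pass between the ordered monomials appearing in \eqref{D} and the intrinsic object $\mathcal{U}_{-r}(\gr(T_xM))$.
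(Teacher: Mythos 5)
Your proposal is correct and follows essentially the same route as the paper: identify $\ker(\pi^r_{r-1})$ with $\mathcal{U}_{-r}(\gr(T_xM))^*\otimes E_x$ by evaluating iterated weighted derivatives at $x$ on germs with vanishing weighted $(r-1)$-jet, use the identity $\xi_1\xi_2\cdots s-\xi_2\xi_1\cdots s=[\xi_1,\xi_2]\cdots s$ together with $q_{-(\mathrm{ord}(\xi_1)+\mathrm{ord}(\xi_2))}([\xi_1,\xi_2](x))=\mathcal L_x(\xi_1(x),\xi_2(x))$ to see that this descends to the universal enveloping algebra of the symbol algebra, and split the sequence by sections that are weight-$r$ homogeneous polynomials in functions adapted to the frame (the paper's $f_{1,1}^{\alpha_{1,1}}\cdots f_{k,i(k)}^{\alpha_{k,i(k)}}s$). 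One caveat: your intermediate claim that $[X_{j,\ell},X_{j',\ell'}]$ is of ``lower weighted order'' is not right --- it is a section of $T^{-(j+j')}M$ and hence of the \emph{same} total weight as the composition $X_{j,\ell}X_{j',\ell'}$, which is precisely why the top-weight part does not vanish but equals the Levi bracket and why the symbol lives in $\mathcal{U}_{-r}$ of the symbol algebra rather than in the symmetric algebra; your eventual conclusion is the correct one, but that intermediate claim, if true, would yield the symmetric algebra instead.
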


\begin{proof}
Suppose that $s$ is a local section defined around $x$ with $j_x^{r-1}s=0$. Choosing some local trivialisation of $E$ over an open neighbourhood $U$ of $x$, we can view $s$ as a smooth function $(s_1,...,s_m): U\subseteq M\rightarrow \R^m$. For vector fields  $\xi_1,..,\xi_\ell\in\Gamma(TM)$ with $\sum_i \textrm{ord}(\xi_i)=r$ the value
$(\xi_1\cdot...\cdot\xi_\ell\cdot s)(x)\in \R^m$ depends only on the values of the vector fields at the point $x$, since $j^{r-1}_xs=0$.
By the same reason, it actually just depends on the elements $q_{-ord(\xi_i)}(\xi_i(x))\in\gr_{-ord(\xi_i)}(T_xM)$. Therefore we obtain a well defined linear map
\begin{equation}\label{T} 
\mathcal T_{-r}(\textrm{gr}(T_xM))\rightarrow \R^m.
\end{equation}
Additionally we have the symmetries of differentiation, like for example $$\xi_1\cdot\xi_2\cdot...\cdot\xi_\ell\cdot s-\xi_2\cdot\xi_1\cdot...\cdot\xi_\ell\cdot s=[\xi_1,\xi_2]\cdot...\cdot\xi_\ell\cdot s$$ and since $q_{-(ord(\xi_1)+ ord(\xi_2))}([\xi_1,\xi_2](x))=\mathcal L_x(\xi_1(x),\xi_2(x)),$ the map (\ref{T}) factorises to a linear map $$\mathcal{U}_{-r}(\textrm{gr}(T_xM))\rightarrow \R^m.$$ Hence any element in the kernel of the projection $\J^r_x(E)\rightarrow \J^{r-1}_x(E)$ defines an element in
$\mathcal{U}_{-r}(\textrm{gr}(T_xM))^*\otimes E_x$ and so we have a linear map $\tau: \ker(\pi_{r-1}^r)\rightarrow\mathcal{U}_{-r}(\textrm{gr}(T_xM))^*\otimes E_x$, which obviously is injective. 
\\To see that $\tau$ is also surjective we construct an inverse map. Let $U$ be an open neighbourhood of $x$ over which all filtration components of the tangent bundle and $E$ trivialise. Choose an adapted local frame $\{X_{1,1},...,X_{k,i(k)}\}$ of $TM$ defined on $U$. Note that such an adapted local frame defines an isomorphism $T_yM\cong\gr(T_yM)$ for all
$y\in U$, where the vector space spanned by $\{X_{j,1}(y),...,X_{j,i(j)}(y)\}$ is mapped onto $\gr_{-j}(T_yM)$.
Now suppose $\{f_{1,1},...,f_{k,i(k)}\}$ are smooth functions which vanish at $x\in M$ and satisfy that $(X_{j,p}\cdot f_{\ell,q})(x)=\delta_{\ell j}\delta_{qp}$.
\\By (\ref{symmetrisation3}) the monomials $X_{1,1}^{\alpha_{1,1}}(x)...X_{k,i(k)}^{\alpha_{k,i(k)}}(x)$ with $|\alpha|=r$ form a basis of $\U_{-r}(\gr(T_xM))$.
For each multi-index $\alpha$ with $|\alpha|=r$ define $\phi_\alpha\in\U_{-r}(\gr(T_xM))^*$ as the linear functional given by
\begin{align*}
\phi_\alpha(X_{1,1}^{\alpha_{1,1}}(x)...X_{k,i(k)}^{\alpha_{k,i(k)}}(x))= & X_{1,1}^{\alpha_{1,1}}\cdot...\cdot X_{k,i(k)}^{\alpha_{k,i(k)}}(f_{1,1}^{\alpha_{1,1}}...f_{k,i(k)}^{\alpha_{k,i(k)}})(x)\\
\phi_\alpha((X_{1,1}^{\beta_{1,1}}(x)...X_{k,i(k)}^{\beta_{k,i(k)}}(x)))= & 0 \quad\textrm{ for } \beta\neq\alpha.
\end{align*}
By its construction the functionals $\{\phi_\alpha: |\alpha|=r\}$ form a basis of $\U_{-r}(\gr(T_xM))^*$ and we define a linear map 
\begin{equation*}
\iota: \U_{-r}(\gr(T_xM))^*\otimes E_x\rightarrow \J^r_x(E)
\end{equation*}
\begin{equation*}
\phi_\alpha\otimes e \mapsto j^r_x(f_{1,1}^{\alpha_{1,1}}...f_{k,i(k)}^{\alpha_{k,i(k)}}s), 
\end{equation*}
where $s$ is some section of $E$ with $s(x)=e\in E_x$.
It is easy to see that $\iota$ is a well defined injection with values in $\ker(\pi^r_{r-1})$, which is inverse to $\tau$. 
\end{proof}

\begin{defin}
For $r\in\N_0$ the disjoint union over all $x$ of $\J^r_x(E)$  $$\J^r(E):=\bigsqcup_{x\in M}\J_x^r(E)$$
is called the \textit{space of jets of weighted order} $r$. We denote by $\pi^r: \J^r(E)\rightarrow M$ the natural projection.
\end{defin}

Using Proposition \ref{jetexact} and an adapted local frame for $\gr(TM)$, it can be easily shown that any vector bundle chart of $E$ gives rise to a local trivialisation of $\J^r(E)$ and one can endow $\J^r(E)$ with the unique structure of a smooth manifold such that these trivialisations are smooth and $\pi^r: \J^r(E)\rightarrow M$ is a vector bundle. Finally, one obtains the following theorem, see \cite{Morimoto3} and for a proof \cite{thesis}.

\begin{thm}\label{Jetvec}
Let $\pi:E\rightarrow M$ be a vector bundle over a filtered manifold $M$.
\begin{enumerate}
\item{For $r\in\N_0$ the natural projection $\pi^r: \J^r(E)\rightarrow M$ is a vector bundle with fiber $\J^r_x(E)$ isomorphic to $\bigoplus_{i=0}^r \mathcal U_{-i}(\textrm{\emph{gr}}(T_xM))^*\otimes E_x$.}
\item{For  $r>s$  the projections $$\pi^r_s: \J^r(E)\rightarrow \J^s(E)$$ are vector bundle homomorphisms and for $r\in\N$ we have an exact sequence of vector bundles
\begin{equation*}
\xymatrix{0\ar[r] & \mathcal{U}_{-r}(\ggr(TM))^*\otimes E\ar[r]^{\quad\quad\iota}& \mathcal J^r(E)\ar[r]^{\pi^r_{r-1}}& \mathcal J^{r-1}(E)\ar[r]& 0  \\ }
\end{equation*}
}
\end{enumerate}
\end{thm}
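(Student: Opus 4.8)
The plan is to manufacture the vector bundle structure on $\J^r(E)$ from vector bundle charts of $E$ together with adapted local frames of $TM$, reading off the fibre from the exact sequence of Proposition \ref{jetexact}; the only point that requires genuine work is the smoothness of the transition functions. As a preliminary remark, iterating the exact sequence of Proposition \ref{jetexact} gives for every $x\in M$ an isomorphism $\J^r_x(E)\cong\bigoplus_{i=0}^r\U_{-i}(\gr(T_xM))^*\otimes E_x$, so $\dim\J^r_x(E)=m\sum_{i=0}^rN_i$, where $m$ is the rank of $E$ and $N_i:=\dim\U_{-i}(\gr(T_xM))$. By the isomorphism \eqref{symmetrisation3}, $N_i=\dim\mathcal S_{-i}(\gr(T_xM))$ depends only on the ranks of the subquotients $T^{-j}M/T^{-j+1}M$ and is therefore locally constant in $x$; moreover $N_i$ equals the number of multi-indices $\alpha$ with $|\alpha|=i$ relative to an adapted local frame.

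Next I would build the charts. Over an open set $U$ on which $E$ and all $T^{-\ell}M$ trivialise, choose a frame $\{e_1,\dots,e_m\}$ of $E|_U$ and an adapted local frame $\{X_{1,1},\dots,X_{k,i(k)}\}$ of $TM|_U$ as in \eqref{adaptedframe}, write $X^\alpha:=X_{1,1}^{\alpha_{1,1}}\cdots X_{k,i(k)}^{\alpha_{k,i(k)}}$, and define
\[
\Phi_U\colon \J^r(E)|_U\longrightarrow U\times\R^{\,m\sum_{i\le r}N_i},\qquad \Phi_U(j^r_xs)=\Big(x,\big((X^\alpha s_\ell)(x)\big)_{|\alpha|\le r,\ 1\le\ell\le m}\Big),
\]
for $s=\sum_\ell s_\ell e_\ell$. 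Well-definedness of $\Phi_U$ on $\sim_r$-classes is immediate from the definition of $\sim_r$ (take $D=X^\alpha$, which is of weighted order $\le r$, and $\lambda=e^*_\ell$). Fibrewise injectivity follows from the local description \eqref{D}: any operator of weighted order $\le r$ is a $C^\infty(U)$-combination of the $X^\alpha$ with $|\alpha|\le r$, so the vanishing of all $(X^\alpha s_\ell)(x)$ forces $D(\langle\lambda,s\rangle)(x)=0$ for every such $D$ and every $\lambda$, after applying the Leibniz rule. Since source and target fibres have the common dimension $m\sum_{i\le r}N_i$, $\Phi_U$ restricts to a linear isomorphism on each fibre. (Surjectivity can also be seen directly through the right inverse $\iota$ of Proposition \ref{jetexact}, using monomials in functions $f_{j,p}$ that vanish at $x$ with $(X_{j,p}\cdot f_{j',p'})(x)=\delta_{jj'}\delta_{pp'}$.)

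The substantial step is to show that for two such data $(U,\{e_\ell\},\{X_{j,p}\})$ and $(U',\{e'_\ell\},\{X'_{j,p}\})$ the map $\Phi_{U'}\circ\Phi_U^{-1}$ on $(U\cap U')\times\R^{\,m\sum_{i\le r}N_i}$ is smooth. On the overlap one has $e_\ell=\sum_{\ell'}g_\ell^{\ell'}e'_{\ell'}$, hence $s'_{\ell'}=\sum_\ell g_\ell^{\ell'}s_\ell$, and since $X'_{j,p}$ is a section of $T^{-j}M$ one has $X'_{j,p}=\sum_{j'\le j,\ p'}h_{j,p}^{j',p'}X_{j',p'}$ with $g,h$ smooth and the second sum triangular with respect to the weight filtration. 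Expanding $X'^\beta(g_\ell^{\ell'}s_\ell)$ by the Leibniz rule — the weights of the two factors adding up to $|\beta|$ — and re-expressing, via \eqref{D}, each occurring $X'$-monomial of weighted order $\le|\beta|$ as a $C^\infty$-linear combination of $X$-monomials of weighted order $\le|\beta|$, one finds that $(X'^\beta s'_{\ell'})(x)$ is a linear combination, with coefficients smooth in $x\in U\cap U'$ (built from $g$, $h$ and their derivatives), of the quantities $(X^\alpha s_\ell)(x)$ with $|\alpha|\le|\beta|$. This is exactly the statement that $\Phi_{U'}\circ\Phi_U^{-1}$ is fibrewise linear and smooth (and block lower-triangular with respect to the weight grading; invertibility of the transition follows since $\Phi_U\circ\Phi_{U'}^{-1}$ is smooth by the same argument). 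I expect this Leibniz-and-change-of-frame bookkeeping to be the only real obstacle of the proof; its crucial input is the local normal form \eqref{D}, which is precisely what makes the notion of weighted order behave well under a change of adapted frame.

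The charts $\{\Phi_U\}$ then equip $\J^r(E)$ with a unique smooth structure for which they are diffeomorphisms onto their images, making $\pi^r\colon\J^r(E)\to M$ a vector bundle with fibre $\bigoplus_{i=0}^r\U_{-i}(\gr(T_xM))^*\otimes E_x$; this is (1). For (2), first note that (1) applied to $r-1$ makes $\J^{r-1}(E)$ a vector bundle. In the charts $\Phi_U$ built from the same $U$, $\{e_\ell\}$ and $\{X_{j,p}\}$, the projection $\pi^r_s$ is the coordinate projection forgetting the components with $|\alpha|>s$, hence a vector bundle homomorphism; in particular $\ker\pi^r_{r-1}$ is a subbundle. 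Finally $\iota$, defined fibrewise as in Proposition \ref{jetexact}, is fibrewise linear with image $\ker\pi^r_{r-1}$, and in the chart $\Phi_U$ together with the obvious trivialisation of $\U_{-r}(\gr(TM))^*\otimes E|_U$ (the dual monomial basis tensored with $\{e_\ell\}$) it is given by a locally constant matrix — the inclusion of the top-weight summand composed with a fixed invertible diagonal rescaling — hence a smooth vector bundle monomorphism. Thus the pointwise exact sequences of Proposition \ref{jetexact} assemble into the claimed exact sequence of vector bundles.
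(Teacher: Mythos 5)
Your proposal is correct and follows essentially the same route the paper takes: the paper itself only sketches this argument (building local trivialisations of $\J^r(E)$ from vector bundle charts of $E$ and adapted local frames, via Proposition \ref{jetexact}) and defers the details to \cite{Morimoto3} and \cite{thesis}. Your write-up fills in exactly those details — the chart $j^r_xs\mapsto((X^\alpha s_\ell)(x))_{|\alpha|\le r}$, the dimension count from iterating the exact sequence, and the Leibniz/reordering argument for the transition functions — so no divergence from the paper's approach and no gap to report.
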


\subsection{The weighted symbol of a differential operator}

Suppose that $E$ and $F$ are vector bundles over a filtered manifold $M$.

\begin{defin}
A differential operator $D:\Gamma(E)\rightarrow\Gamma(F)$ is of \textit{weighted order} $\leq r$, if for any point $x\in M$ and any two sections $s,t\in\Gamma(E)$ the equation
$j^r_xs=j^r_xt$ implies that $D(s)(x)=D(t)(x)$. The smallest number $r\in\N_0$ such that this holds, is called the \textit{weighted order} of $D$.
\end{defin}

Given a differential operator $D:\Gamma(E)\rightarrow \Gamma(F)$ of weighted order $r$, we obtain a bundle map $\phi: \J^r(E)\rightarrow F$ defined by $\phi(j^r_xs)=D(s)(x)$.
Conversely, if $\phi: \J^r(E)\rightarrow F$ is a bundle map, then $D=\phi\circ j^r$ defines a differential operator of weighted order at most $r$, where $j^r:\Gamma(E)\rightarrow\Gamma(\J^r(E))$ is the \textit{universal differential operator of weighted order} $r$ given by
$s\mapsto(x\mapsto j^r_xs)$. Therefore we can view a differential operator $D:\Gamma(E)\rightarrow\Gamma(F)$ of weighted order $r$  equivalently as a bundle map $\J^r(E)\rightarrow F$. Note that a differential operator $D:\Gamma(E)\rightarrow\Gamma(F)$ of weighted order $r$ is linear if and only if the associated bundle map $\phi: \J^r(E)\rightarrow F$ is a vector bundle map.

\begin{defin} 
Let $D: \Gamma(E)\rightarrow\Gamma(F)$ be a differential operator of weighted order $r$ with associated bundle map $\phi: \J^r(E)\rightarrow F$.
The \textit{weighted symbol} $\sigma_r(\phi)$ of $D$ is the composition of $\phi$ with the canonical inclusion $\iota: \mathcal{U}_{-r}(\textrm{gr}(TM))^*\otimes E\hookrightarrow \J^r(E).$ 

\begin{equation}
\xymatrix{
0\ar[r]& \mathcal{U}_{-r}(\textrm{gr}(TM))^*\otimes E \ar[r]^{\quad\quad\iota}\ar[dr]^{\sigma_r(\phi)}& \J^r(E) \ar[r]^{\pi^r_{r-1}}\ar[d]^{\phi}& \J^{r-1}(E)\ar[r]& 0\\ & & F& &\\}
\end{equation}

Sometimes we will also just write $\sigma(\phi)$ for the weighted symbol.

\end{defin}

\begin{rem}
If $M$ is a trivial filtered manifold $TM=T^{-1}M$, then the weighted jet bundle $\J^r(E)$ of a vector bundle $E$ coincides with the usual vector bundle $J^r(E)$ of jets of order $r$.
The symbol algebra of $M$ at some point $x\in M$ is just the tangent space $T_xM$ viewed as abelian Lie algebra and the weighted symbol $\sigma_r(\phi)$ of a differential operator $\phi: \J^r(E)\rightarrow F$ is the usual principal symbol $\sigma_r(\phi): S^r(T_xM)^*\otimes E\rightarrow F$.
\end{rem}

\section{Regular infinitesimal flag structures}\label{SectionRegInfFlag}

In this section we explain briefly the notion of a regular infinitesimal flag structure and give some examples, for a detailed discussion of these structures see \cite{CSbook}.

\subsection{Parabolic subalgebras of semisimple Lie algebras}

Suppose that $\mathfrak g$ is a complex semisimple Lie algebra. 

\begin{defin}
A $\textit{Borel subalgebra}$ of $\g$ is a maximal solvable subalgebra of $\g$. A subalgebra
$\mathfrak p$ of $\g$ is called a $\textit{parabolic}$ subalgebra, if $\mathfrak p$ contains a Borel subalgebra.
\end{defin}

Let $\mathfrak h$ be a Cartan subalgebra of $\mathfrak g$. Then we denote by $\Delta$ the set of roots associated to $\mathfrak h$ and for $\alpha\in\Delta$ we write $\g_\alpha$ for the corresponding root space. Further, choose a simple subsystem of roots $\Delta^0\subset\Delta$ and denote by $\Delta^+$ the corresponding system of positive roots.
The subalgebra of $\g$ defined by $$\mathfrak b=\mathfrak h\oplus\bigoplus_{\alpha\in\Delta^+}\mathfrak g_{\alpha}$$ is a maximal solvable subalgebra of $\g$, called the $\textit{standard Borel subalgebra}$ associated to $\mathfrak h$ and $\Delta^0$. A $\textit{standard parabolic subalgebra}$ is a subalgebra which contains $\mathfrak b$.
It is well known that standard parabolic subalgebras can be classified by subsets $\Sigma\subset\Delta^0$ of simple roots.
In fact, the map $$\mathfrak p \mapsto \Sigma_{\mathfrak p}=\{\alpha\in\Delta^0:\mathfrak g_{-\alpha}\nsubseteq \mathfrak p \}$$ defines a bijection between standard parabolic subalgebras and subsets of simple roots, where the inverse is given by assigning to a subset $\Sigma$ of simple roots the algebra $\p_{\Sigma}$, which is the direct sum of $\b$ and all root spaces corresponding to negative roots, which can be written as a linear combination of elements of $\Delta^0\setminus\Sigma$.
\\The fact that Cartan subalgebras and the choice of a simple subsystem of roots are unique up to conjugation implies that every parabolic subalgebra is conjugate by an inner automorphism of $\g$ to a standard one. Hence up to conjugation a parabolic subalgebra can be uniquely described by a subset of simple roots. 
\\There is an alternative description of parabolic subalgebras as subalgebras in semisimple Lie algebras that determine $|k|$-gradings on semisimple Lie algebras.

\begin{defin}
Let $\mathfrak g$ be a complex or real semisimple Lie algebra and $k>1$ an integer.
A \textit{$|k|$-grading} on $\mathfrak g$ is a vector space decomposition
$$\mathfrak g=\mathfrak g_{-k}\oplus...\oplus\mathfrak g_0\oplus...\oplus\mathfrak g_k$$
such that
\begin{itemize}
\item{ $[\mathfrak g_i,\mathfrak g_j]\subseteq\mathfrak g_{i+j}$, where we set $\mathfrak g_i=\{0\}$ for $|i|>k$}
\item{the subalgebra $\mathfrak g_{-}:=\mathfrak g_{-k}\oplus...\oplus\mathfrak g_{-1}$ is generated  as Lie algebra by $\mathfrak g_{-1}$}
\item{$\mathfrak g_{\pm k}\neq \{0\}$}
\end{itemize}
\end{defin}

By the grading property each $\g_i$ is a $\g_0$-module and it is easy to see that the Killing form of $\g$ induces a duality between the $\g_0$-modules $\g_i$ and $\g_{-i}$.
\\Let $\mathfrak p$ be a standard  parabolic subalgebra of a complex semisimple Lie algebra $\g$ and denote by $ht_{\Sigma_p}(\alpha)$ the $\Sigma_{\mathfrak p}$-height of $\alpha$, i.e.  the sum of all coefficients of elements in $\Sigma_{\mathfrak p}$ in the representation of $\alpha$ as linear combination of simple roots.
Then $\mathfrak p$ determines a $|k|$-grading on $\mathfrak g$ as follows
$$\mathfrak g_0:=\mathfrak h\oplus\bigoplus_{ht_{\Sigma_{\mathfrak p}}(\alpha)=0}\mathfrak g_{\alpha}
\quad\quad\textrm{ and }\quad\quad\mathfrak g_i:=\bigoplus_{ht_{\Sigma_{\mathfrak p}}(\alpha)=i}\mathfrak g_{\alpha}$$
where $\mathfrak g^0:=\g_0\oplus \g_1\oplus...\oplus \g_k$ equals $\mathfrak p$.
\\Conversely, given a $|k|$-grading on a complex semisimple Lie algebra $\mathfrak g=\mathfrak g_{-k}\oplus...\oplus\mathfrak g_k$, one can show that there is a choice of $\mathfrak h $ and $\Delta^0$ such that $\mathfrak g^0$ contains the standard Borel subalgebra and hence it is parabolic. Moreover, the grading is then given by
$\Sigma_{\mathfrak g^0}$-height. Therefore one obtains a bijection between conjugation classes of parabolic subalgebras and isomorphism classes of $|k|$-gradings, for details we refer to \cite{CSbook}.
Moreover, using the description of $|k|$-gradings in terms of weights, one shows, see e.g. \cite{CSbook}:

\begin{prop} \label{g_0}
Suppose that $\g=\g_{-k}\oplus...\oplus\g_k$ is a complex semisimple $|k|$-graded Lie algebra. Then the following holds:
\begin{enumerate}
\item{The subalgebra $\g_0$ is reductive. It is called the \textit{Levi subalgebra} of the parabolic subalgebra $\p:=\g^0$.}
\item{Let $\h$ be a Cartan subalgebra of $\g$ and $\Delta^0$ a simple subsystem of roots such that $\p$ is a standard parabolic subalgebra. Denote by
$\{H_\alpha\}_{\alpha\in\Delta^0}$ the basis of $\h$, where $H_{\alpha}\in\h$ corresponds under the isomorphism $\h\rightarrow \h^*$ induced by the Killing form to $\alpha\in\Delta^0$. Then we have $$\g_0=\z(\g_0)\oplus\h_0\oplus\bigoplus_{ht_{\Sigma_{\mathfrak p}}(\alpha)=0}\mathfrak g_{\alpha},$$
where $\z(\g_0)$ is the center of $\g_0$ and $\h_0$ is the linear span of all the $H_\alpha$ with $\alpha\in\Delta^0\backslash \Sigma_\p$. Hence the dimension of $\z(\g_0)$ equals the number of elements in $\Sigma_\p$. Moreover, the subalgebra $\h_0$ is a Cartan subalgebra of the semisimple part $\g_0^{ss}$ of $\g_0$ whose corresponding root decomposition is exactly $$\g_0^{ss}=\h_0\oplus\bigoplus_{ht_{\Sigma_{\mathfrak p}}(\alpha)=0}\mathfrak g_{\alpha}.$$
 }
\end{enumerate}
\end{prop}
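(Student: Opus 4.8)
The plan is to translate everything into the root decomposition of $\g$ relative to $\h$ and to exploit that the grading is prescribed by $\Sigma_\p$-height. Write $\Delta_0:=\{\alpha\in\Delta:ht_{\Sigma_\p}(\alpha)=0\}$, so that by the formula defining the $|k|$-grading one has $\g_0=\h\oplus\bigoplus_{\alpha\in\Delta_0}\g_\alpha$. For the reductivity in (1), I would introduce the \emph{grading element}: since the $\Sigma_\p$-height is the restriction to $\Delta$ of a linear functional on $\h^*$ and the Killing form identifies $\h^*$ with $\h$, there is a unique $E\in\h$ with $\alpha(E)=ht_{\Sigma_\p}(\alpha)$ for all $\alpha\in\Delta$. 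Then $\mathrm{ad}(E)$ is diagonalisable and acts on $\g_i$ by the scalar $i$, so $\g_0=\ker(\mathrm{ad}(E))$ is exactly the centraliser in $\g$ of the semisimple element $E$; centralisers of semisimple elements in a semisimple Lie algebra are reductive, which gives (1).

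For (2), the first step is to locate $\z(\g_0)$ inside $\h$: because $\h$ is a Cartan subalgebra of $\g$ it is self-centralising, hence self-centralising in $\g_0$, so $\z(\g_0)\subseteq\h$. An element $H\in\h$ lies in $\z(\g_0)$ if and only if $\alpha(H)=0$ for every $\alpha\in\Delta_0$. Here I would record the elementary fact that, since the coefficients of a root in the basis $\Delta^0$ are all of the same sign, a root has vanishing $\Sigma_\p$-height precisely when its expansion involves only roots from $\Delta^0\setminus\Sigma_\p$; consequently $\Delta_0$ spans the same subspace of $\h^*$ as $\Delta^0\setminus\Sigma_\p$, and the above condition is equivalent to $\alpha(H)=0$ for all $\alpha\in\Delta^0\setminus\Sigma_\p$. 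Thus $\z(\g_0)$ is the common kernel of $\{\alpha:\alpha\in\Delta^0\setminus\Sigma_\p\}$; these being part of the basis $\Delta^0$ of $\h^*$, this kernel has dimension $|\Delta^0|-|\Delta^0\setminus\Sigma_\p|=|\Sigma_\p|$.

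Next I would prove the splitting $\h=\z(\g_0)\oplus\h_0$. Since $\{H_\alpha\}_{\alpha\in\Delta^0}$ is a basis of $\h$, the span $\h_0$ has dimension $|\Delta^0\setminus\Sigma_\p|=\dim\h-\dim\z(\g_0)$, so it suffices to check $\z(\g_0)\cap\h_0=0$. If $H=\sum_{\alpha\in\Delta^0\setminus\Sigma_\p}c_\alpha H_\alpha$ lies in $\z(\g_0)$, then for every $\beta\in\Delta^0\setminus\Sigma_\p$ one has $0=\beta(H)=\sum_\alpha c_\alpha\,\langle\beta,\alpha\rangle$, with $\langle\,,\,\rangle$ the form induced on $\h^*$ by the Killing form; the Gram matrix of the linearly independent set $\Delta^0\setminus\Sigma_\p$ is positive definite, hence invertible, so all $c_\alpha$ vanish. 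Combining $\h=\z(\g_0)\oplus\h_0$ with $\g_0=\h\oplus\bigoplus_{\alpha\in\Delta_0}\g_\alpha$ yields the stated decomposition $\g_0=\z(\g_0)\oplus\h_0\oplus\bigoplus_{ht_{\Sigma_\p}(\alpha)=0}\g_\alpha$.

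Finally, for the semisimple part I would argue that $\Delta_0$ is a closed, symmetric root subsystem with simple roots $\Delta^0\setminus\Sigma_\p$, so that $\s:=\h_0\oplus\bigoplus_{\alpha\in\Delta_0}\g_\alpha$ is the semisimple subalgebra attached to this subsystem: it is generated by the $\mathfrak{sl}_2$-triples of the roots in $\Delta^0\setminus\Sigma_\p$, it satisfies $[\s,\s]=\s$, and $\h_0$ is a Cartan subalgebra of it whose associated root decomposition is exactly $\s=\h_0\oplus\bigoplus_{\alpha\in\Delta_0}\g_\alpha$. Since in $\g_0=\z(\g_0)\oplus\s$ the first summand is central and $\s$ has trivial centre, $\s$ must equal $\g_0^{ss}$, which finishes the proof. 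The step I expect to require the most care is this last one — verifying that the height-zero roots form a root subsystem generated by $\Delta^0\setminus\Sigma_\p$ and that the associated subalgebra is semisimple with Cartan $\h_0$ (rather than all of $\h$) and the asserted root decomposition; this is where one genuinely invokes the structure theory of root subsystems, everything else being linear algebra with the Cartan matrix.
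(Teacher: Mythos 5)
Your proof is correct. The paper does not actually prove this proposition itself --- it only cites \cite{CSbook} --- and your argument (the grading element realising $\g_0$ as the centraliser of a semisimple element for (1), the observation that $\z(\g_0)\subseteq\h$ is the common kernel of $\Delta^0\setminus\Sigma_\p$ together with the Gram-matrix computation for the splitting $\h=\z(\g_0)\oplus\h_0$, and the standard fact that the height-zero roots form a closed symmetric subsystem with simple system $\Delta^0\setminus\Sigma_\p$ whose associated subalgebra is semisimple with Cartan subalgebra $\h_0$) is precisely the standard route taken in that reference.
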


\begin{rem}[The real case]
A subalgebra $\p$ of a real semisimple Lie algebra $\g$ is called \textit{parabolic}, if its complexification $\p^\C$ is a parabolic subalgebra in the complexification $\g^\C$ of $\g$. From the discussion in the complex case it follows therefore immediately that the subalgebra $\g^0$ of a real $|k|$-graded semisimple Lie algebra $\g$ is a parabolic subalgebra.
It is well known that parabolic subalgebras of real semisimple Lie algebras can be described up to conjugation by subsets of simple restricted roots respectively by subsets $\Sigma$ of non-compact simple roots of $\g^\C$ satisfying that if in the Satake diagram of $\g$ two roots are connected by an arrow they are either both in $\Sigma$ or none of them is.
This description can then be used analogously as in the complex case to establish a correspondence between parabolic subalgebras and $|k|$-gradings on real semisimple Lie algebras, details can for instance be found in \cite{CSbook}.
\end{rem}

\subsection{Regular infinitesimal flag manifolds}\label{RegInfFlag}

Let $\g=\g_{-k}\oplus...\oplus \g_0\oplus...\oplus\g_k$ be a $|k|$-graded semisimple Lie algebra and set 
$$\g_-:=\g_{-k}\oplus...\oplus \g_{-1}\quad\textrm{ and }\quad\p_+:=\g_1\oplus...\oplus \g_k.$$
Suppose that $G$ is a
Lie group with Lie algebra $\g$.  A closed subgroup $P\subseteq G$ is a \textit{parabolic subgroup} corresponding to the given $|k|$-grading, if its Lie algebra equals the parabolic subalgebra $\p:=\g_0\oplus\p_+$.
Having fixed a parabolic subgroup $P$ corresponding to the grading, the \textit{Levi subgroup} $G_0$ of $P$ is the closed subgroup of $P$ given by 
$$G_0:=\{g\in P: Ad(g)(\mathfrak g_{i})\subset\mathfrak g_{i} \textrm{ for } i=-k,...,k\},$$
where $Ad: G\rightarrow GL(\g)$ denotes the adjoint representation of $G$. Its Lie algebra is the reductive Lie algebra $\g_0$.
Moreover, the definition of $G_0$ shows that the adjoint action induces a group homomorphism 
\begin{equation}\label{grouphom}
Ad: G_0 \rightarrow Aut_{\gr}(\g_-), 
\end{equation}
where $Aut_{\gr}(\g_-)$ denotes as in Section \ref{filteredmanifold} the group of grading preserving Lie algebra automorphisms of $\g_-$. \\For a semisimple Lie group $G$, whose Lie algebra is endowed with a $|k|$-grading and a parabolic subgroup $P$ with Lie algebra $\p$ a \textit{regular infinitesimal flag structure of type} $(G,P)$ on a manifold $M$ consists of the following data:
\begin{itemize}
\item{a filtration of the tangent bundle $TM=T^{-k}M\supset...\supset T^{-1}M,$ which makes $M$ into a filtered manifold whose symbol algebra in each point is isomorphic to the Lie algebra $\g_-$.  }
\item{a reduction $\mathcal G_0\rightarrow M$ of the structure group of the frame bundle $\mathcal P(\gr(TM))$ of $\gr(TM)$ to the group $G_0$ with respect to the homomorphism (\ref{grouphom}).}
\end{itemize}

Let us give some overview of examples, for an extensive discussion of these examples and many others see \cite{CSbook}:

\begin{ex}
Suppose that $\g=\g_{-1}\oplus\g_0\oplus\g_1$ is a $|1|$-graded semisimple Lie algebra, $G$ a Lie group with Lie algebra $\g$ and $P$ a parabolic subgroup corresponding to the given grading.  A regular infinitesimal flag structure of type $(G,P)$ is just a reduction of the structure group of the frame bundle of $M$ to the Levi subgroup $G_0$ via $Ad:G_0\rightarrow GL(\g_{-1})$. So it is just a first order $G_0$-structure. Among these geometric structures, which are called \textit{almost hermitian symmetric structures}, we have conformal structures, almost quaternionic structures and almost Grassmannian structures.
\end{ex}

\begin{ex}
A regular infinitesimal flag structure consists of a filtration of the tangent bundle and a reduction of the structure group corresponding to the homomorphism (\ref{grouphom}). If this homomorphism is an isomorphism, the regular infinitesimal flag structure is just a filtered manifold with symbol algebra $\g_-$. 
The most prominent examples of this type are generic rank $2$ distributions on five dimensional manifolds, studied by Cartan in \cite{Cartan}, generic rank $3$ distributions on six dimensional manifolds, investigated by Bryant in \cite{Bryant} as well as quaternionic contact structures introduced by Biquard in \cite{Biquard}.
\end{ex}

\begin{ex}\label{excontact}
A \textit{contact grading} on $\g$ is a $|2|$-grading $\g=\g_{-2}\oplus\g_{-1}\oplus\g_0\oplus\g_1\oplus\g_2$ such that $\g_-$ is a Heisenberg Lie algebra. It turns out that such gradings exists only on simple Lie algebras $\g$ and are unique up to isomorphism, for a complete classification see \cite{Yam}. Given a real contact grading on a simple Lie algebra $\g$ and corresponding groups $P\subset G$, a regular infinitesimal flag structure of type $(G,P)$ can be interpreted as a contact structure $TM=T^{-2}M\supset T^{-1}M$  together with a reduction of the structure group of the frame bundle $\mathcal P(T^{-1}M)$ of the contact subbundle to $G_0$, since $Aut_{\gr}(\g_-)$ can be seen as a subgroup of $GL(\g_{-1})$. 
Geometric structures of this form are for instance oriented contact structures, partially integrable almost CR-structures of hypersurface type and Lagrangean contact structures.
\end{ex}

In the sequel we will always assume that we are dealing with a manifold endowed with a regular infinitesimal flag structure of some type $(G,P)$, where we have not only a reduction of the structure group to $G_0$ via (\ref{grouphom}), but also a further reduction $\G_0^{ss}\rightarrow M$ to the semisimple part $G_0^{ss}$ of $G_0$. In the case of conformal structures, this additional reduction means to choose a metric from the conformal class or in the case of contact structures it can be interpreted as the choice of a contact from. This assumption is not necessary, but, since we are not interested in questions of invariance here, it is even something natural to do and it will allow us to formulate the results of this work in a more approachable way.

\subsection{Natural vector bundles and representations of $G_0^{ss}$}\label{naturalvec}

Suppose that $M$ is a manifold endowed with a geometric structure $(\G_0^{ss}, \{T^iM\})$ as in Section \ref{RegInfFlag}. For a representation $\E$ of $G_0^{ss}$, we will denote by $$E:=\G_0^{ss}\times_{G_0^{ss}}\E$$ the vector bundle associated to the principal bundle $\G_0^{ss}$ with standard fiber $\E$. Note that any $G_0^{ss}$-equivariant map between two $G_0^{ss}$-representations $\E$ and $\F$ induces a vector bundle homomorphism between the corresponding associated vector bundles $E$ and $F$.
\\The reduction of the structure group of $\mathcal P(\gr(TM))$ to $G_0^{ss}$ obviously induces isomorphisms of vector bundles 
$$\G_0^{ss}\times_{G_0^{ss}}\g_{-i}\cong\gr_{-i}(TM)\quad\textrm{ and }\quad\mathcal G_0^{ss}\times_{G_0^{ss}}\g_-\cong  \gr(TM).$$
Since the Killing form induces a duality of $G_0$-modules between $\g_{i}$ and $\g_{-i}$, we have also the following isomorphisms of vector bundles
$$\G_0^{ss}\times_{G_0^{ss}}\g_i\cong\gr_{-i}(TM)^*\quad\textrm{ and }\quad \G_0^{ss}\times_{G_0^{ss}}\p_+\cong \gr(TM)^*.$$ 
Later we will need the decomposition of the $G_0^{ss}$-representation $\g_{-1}$ into irreducibles. Using the description of the grading on $\g$ in terms of roots and Proposition \ref{g_0} one verifies directly that the following holds: 

\begin{lem}\label{g_1}
Suppose that $\g=\g_{-k}\oplus...\oplus\g_k$ is complex semisimple $|k|$-graded Lie algebra.
Let $\h\subset\g$ be a Cartan subalgebra and $\Delta^0=\{\alpha_1,...\alpha_n\}$ a simple subsystem of roots  such that the parabolic subalgebra $\p$ is standard. We define $J$ as the subset of $I:=\{1,...,n\}$ consisting of those elements $i\in I$ with $\alpha_i\in\Sigma_\p$. 
Then the $\g_0^{ss}$-module $\g_{-1}$ decomposes into irreducibles as follows $$\g_{-1}=\bigoplus_{j\in J}\g_{-1,j},$$ where $\g_{-1,j}$ is the unique irreducible representation with highest weight $-\alpha_{j}|_{\h_0}$.
\end{lem}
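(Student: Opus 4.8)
The plan is to argue entirely inside the root space decomposition of $\g$, using Proposition \ref{g_0}. Fix $\h$, $\Delta^0=\{\alpha_1,\dots,\alpha_n\}$ and $J\subseteq I$ as in the statement. By Proposition \ref{g_0}, $\h_0=\mathrm{span}\{H_{\alpha_i}:i\in I\setminus J\}$ is a Cartan subalgebra of $\g_0^{ss}$ whose root spaces are exactly the $\g_\alpha$ with $ht_{\Sigma_\p}(\alpha)=0$, and the positive system on $\g_0^{ss}$ induced by $\Delta^+$ has as simple roots the restrictions $\alpha_i|_{\h_0}$, $i\in I\setminus J$; in particular these restrictions form a basis of $\h_0^*$. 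Since $\g_{-1}=\bigoplus_{ht_{\Sigma_\p}(\alpha)=-1}\g_\alpha$ and, by definition of $ht_{\Sigma_\p}$, every negative root $\alpha$ of $\Sigma_\p$-height $-1$ is uniquely of the form $-\alpha=\alpha_j+\gamma$ with $j\in J$ and $\gamma$ a $\Z_{\geq0}$-combination of the $\alpha_i$, $i\in I\setminus J$, grouping the root spaces by this index $j$ yields a vector space decomposition $\g_{-1}=\bigoplus_{j\in J}\g_{-1,j}$. That each $\g_{-1,j}$ is a $\g_0^{ss}$-submodule is immediate: $[\h_0,\g_\alpha]\subseteq\g_\alpha$, and a root $\delta$ of $\g_0^{ss}$ has $ht_{\Sigma_\p}(\delta)=0$, so (being a root, with simple-root coefficients all of one sign) it is supported on $\{\alpha_i:i\in I\setminus J\}$ and $[\g_\delta,\g_\alpha]\subseteq\g_{\alpha+\delta}\subseteq\g_{-1,j}$ whenever $\g_\alpha\subseteq\g_{-1,j}$, the coefficient of $\alpha_j$ being unchanged. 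By complete reducibility of finite-dimensional $\g_0^{ss}$-modules, each $\g_{-1,j}$ is a direct sum of irreducibles.

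Next I would analyse the $\h_0$-weights of $\g_{-1,j}$. Distinct roots occurring in $\g_{-1,j}$ differ by a nonzero combination of the $\alpha_i$, $i\in I\setminus J$, hence restrict to distinct elements of $\h_0^*$, so each $\h_0$-weight space of $\g_{-1,j}$ is a single one-dimensional root space. As $-\alpha_j$ occurs in $\g_{-1,j}$ with weight $-\alpha_j|_{\h_0}$, and every weight of $\g_{-1,j}$ is $-\alpha_j|_{\h_0}$ minus a $\Z_{\geq0}$-combination of the simple roots $\alpha_i|_{\h_0}$, the weight $-\alpha_j|_{\h_0}$ is the highest one and $\g_{-\alpha_j}$ is the corresponding highest weight line, hence annihilated by the sum $\n_0^+:=\bigoplus_{\alpha\in\Delta^+,\ ht_{\Sigma_\p}(\alpha)=0}\g_\alpha$ of positive root spaces of $\g_0^{ss}$. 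The number of irreducible summands of $\g_{-1,j}$ equals $\dim(\g_{-1,j})^{\n_0^+}$, and since all weight spaces are root spaces, $(\g_{-1,j})^{\n_0^+}$ is spanned by those $\g_\alpha\subseteq\g_{-1,j}$ with $\alpha+\delta\notin\Delta$ for every positive root $\delta$ of $\g_0^{ss}$ (using $[\g_\delta,\g_\alpha]=\g_{\alpha+\delta}$ when $\alpha+\delta\in\Delta$). Thus it suffices to show that $\g_{-\alpha_j}$ is the only such line; then $\g_{-1,j}$ is irreducible, and its highest weight being the weight of $(\g_{-1,j})^{\n_0^+}$, it is the irreducible $\g_0^{ss}$-module with highest weight $-\alpha_j|_{\h_0}$, as claimed.

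The heart of the proof is therefore the combinatorial claim: \emph{for every positive root $\theta=\alpha_j+\gamma$ with $\gamma\neq0$ a $\Z_{\geq0}$-combination of the $\alpha_i$, $i\in I\setminus J$, there is a positive root $\delta$ of $\g_0^{ss}$ with $\theta-\delta\in\Delta^+$} --- equivalently, the line $\g_{-\theta}$ in $\g_{-1,j}$ is not annihilated by $\n_0^+$. To prove it, assume $\theta-\alpha_i\notin\Delta$ for every $i\in I\setminus J$ (otherwise choose $\delta=\alpha_i$ for some $i$ with $\alpha_i$ in the support of $\theta$ and $\theta-\alpha_i\in\Delta$). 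Then for each $i\in I\setminus J$ in the support of $\theta$ the root string property forces $\langle\theta,\alpha_i^\vee\rangle\leq0$, whence, writing $(\cdot,\cdot)$ for the inner product on $\h^*$ induced by the Killing form, $(\theta,\gamma)\leq0$ and therefore $(\theta,\alpha_j)=(\theta,\theta)-(\theta,\gamma)\geq(\theta,\theta)>0$. Thus $\theta-\alpha_j\in\Delta^+$, and since $\theta$ had coefficient $1$ at $\alpha_j$ and no other simple root of $\Sigma_\p$ in its support, $\delta:=\theta-\alpha_j$ is a positive root of $\g_0^{ss}$ with $\theta-\delta=\alpha_j\in\Delta^+$. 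This proves the claim, and with it the lemma. I expect this combinatorial claim to be the only nonroutine point; everything else is bookkeeping with the root space decomposition together with Proposition \ref{g_0}.
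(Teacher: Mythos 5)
Your proof is correct: the reduction to counting $\n_0^+$-invariant root spaces in each $\g_{-1,j}$ and the root-string/inner-product argument showing that only $\g_{-\alpha_j}$ survives are sound. The paper offers no written proof of Lemma \ref{g_1} (it merely asserts that "one verifies directly" using the root description of the grading and Proposition \ref{g_0}), and your argument is precisely that direct verification, carried out in full.
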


We finish this section by fixing some notation.
Suppose that $\E$ and $\F$ are complex irreducible representations of $\g_0^{ss}$ with highest weights $\lambda$ and $\mu$ respectively. It is well known that there exists an irreducible component $\E\circledcirc\F$ of multiplicity one in $\E\otimes\F$, which has highest weight $\lambda+\mu$. It is called the \textit{Cartan product} of $\E$ and $\F$ and up to multiplication by a scalar there is a unique projection $\E\otimes\F\rightarrow\E\circledcirc\F$.
\\If $\g$ is real and $\E$ is a real irreducible representation of $\g_0^{ss}$ having no complex invariant structure, then the complexification $\E^{\C}$ is a complex irreducible representation of $\g_0^{ss}$ and we will mean by the highest weight of $\E$ the highest weight of $\E^\C$. Given two such real irreducible representation $\E$ and $\F$ of $\g_0^{ss}$ we denote by $\E\circledcirc\F$ the unique irreducible component in $\E\otimes\F$, whose complexification equals $\E^\C\circledcirc\F^\C$ in $\E^\C\otimes\F^\C$. 

\begin{rem}\label{remweight}
Let $\g$ be a complex semisimple $|k|$-graded Lie algebra. For a simple root $\alpha_i$ of $\g$ denote by $\omega_{\alpha_i}\in\h^*$ the corresponding fundamental weight, which is characterised by $2\frac{<\omega_{\alpha_i},\alpha_\ell>}{<\alpha_\ell,\alpha_\ell>}=\delta_{i\ell}$ where $< , >$ is the Killing form of $\g$. Recall that the highest weight of an irreducible representations of $\g$ can be uniquely written as a linear combination $\sum_{i\in I}a_i\omega_{\alpha_i}$ with $a_i\in\N_0$.
Note that by (2) of Proposition \ref{g_0} the highest weights of irreducible representations of $\g_0^{ss}$ can be uniquely written as restrictions to $\h_0$ of linear combinations $\sum_{i\in I\backslash J}a_i\omega_{\alpha_i}$ with $a_i\in \N_0$. 
\end{rem}

\section{Lie algebra cohomology}\label{LieAlgCoh}

One of the crucial ingredients, we will use in the sequel, is Lie algebra cohomology and especially Kostant's version of the Bott-Borel-Weil Theorem (\cite{Kost}). In this section we therefore review  Kostant's results as far as we need them and fix some notation related to them.   
\\Suppose that $\g=\g_-\oplus\g_0\oplus\p_+$ is a $|k|$-graded semisimple Lie algebra, $G$ a Lie group with Lie algebra $\g$ and $P$ a parabolic subgroup corresponding to the given $|k|$-grading. Moreover, let $\V$ be a representation of $G$. 
The Lie algebra cohomology, in which we will be interested, is the cohomology $H^*(\g_-,\V)$ of $\g_-$ with values in $\V$, where $\V$ is viewed as representation of $\g_-$. Let us recall how the cochain complex for computing this Lie algebra cohomology looks like. 
The $n$-th cochain space is given by the space $\Lambda^n\g_-^*\otimes\V$ of $n$-linear alternating maps from $\g_-$ to $\V$ and the differential $\partial: \Lambda^n\g_-^*\otimes\V\rightarrow \Lambda^{n+1}\g_-^*\otimes\V$ is defined by
$$\partial(f)(X_0,...,X_n):=\sum_{i=0}^n (-1)^iX_if(X_0,...,\hat{X^i},...,X_n)$$
$$+\sum_{i<j}(-1)^{i+j}f([X_i,X_j],X_0,...,\hat{X^i},...,\hat{X^j},...,X^n)$$
for $X_0,...,X_n\in\mathfrak g_-$, where the hat over an argument denotes omission.
\\Note that the Levi subgroup $G_0$ acts (by its definition) on $\g_-$ via the adjoint action as well as on $\V$ by restriction. Hence we also have an induced action of $G_0$ on the cochain spaces $\Lambda^n\g_-^*\otimes\V$ and it can be directly verified that the differentials $\partial$ are $G_0$-equivariant. Therefore the cohomology spaces $H^n(\g_-,\V)$ of this complex are naturally $G_0$-modules. 
\\Dualising the differential of the cochain complex for computing the cohomology of $\p_+$ with values in the dual representation $\V^*$ leads to a $P$-equivariant map
$\partial^*:\Lambda^{n+1}\p_+\otimes\V\rightarrow\Lambda^n\p_+\otimes\V$ which satisfies $\partial^*\circ\partial^*=0$. 
Explicitly, 
$\partial^*$ is given by
$$\partial^*(Z_0\wedge...\wedge Z_n\otimes v)=\sum_{i=0}^n (-1)^{i+1} Z_0\wedge....\wedge\hat{Z_i}\wedge...\wedge Z_n\otimes Z_n v$$
$$+\sum_{i<j}(-1)^{(i+j)} [Z_i,Z_j]\wedge Z_0\wedge...\wedge\hat{Z_i}\wedge...\wedge\hat{Z_j}\wedge...\wedge Z_n\otimes v.$$

Since the Killing form induces an isomorphism $\g_-^*\cong\p_+$ of $G_0$-modules, we may identify the $G_0$-modules $\Lambda^n\mathfrak g_-^*\otimes \mathbb V$ and $\Lambda^n\mathfrak p_+\otimes \mathbb V$ and view the codifferential $\partial^*$ as a map 
$$\partial^*: \Lambda^n\mathfrak g_-^*\otimes \mathbb V\rightarrow \Lambda^{n-1}\mathfrak g_-^*\otimes \mathbb V.$$
In \cite{Kost} Kostant showed that the operators $\partial$ and $\partial^*$ are adjoint for some inner product (hermitian in the complex case) on the spaces
$\Lambda^n\mathfrak g_-^*\otimes \mathbb V$ and that one has an \textit{algebraic Hodge decomposition}: 
\begin{equation}\label{Hodge}
\Lambda^n\mathfrak g_-^*\otimes \mathbb V=\textrm{im}(\partial^*)\oplus\ker(\square)\oplus\textrm{im}(\partial),
\end{equation}
where $\square$ is the $G_0$-equivariant map $\partial\partial^*+\partial^*\partial$ on $\Lambda^n\mathfrak g_-^*\otimes \mathbb V$.Moreover, one has $\ker(\partial^*)=\textrm{im}(\partial^*)\oplus\ker(\square)$ and $\ker(\partial)=\textrm{im}(\partial)\oplus\ker(\square).$
This gives rise to $G_0$-module isomorphisms 
\begin{equation}\label{isocohom}
H^n(\g_-,\V)\cong\ker(\partial^*)/\im(\partial^*)\quad\textrm{ and }\quad H^n(\mathfrak g_-,\mathbb V)\cong \ker(\square).
\end{equation}
Note that via the second isomorphism $H^n(\mathfrak g_-,\mathbb V)$ can be naturally viewed as $G_0$-submodule in $\Lambda^n\mathfrak g_-^*\otimes \mathbb V$.
Since $\partial^*$ is obtained by dualising the differential for computing the cohomology $H^*(\p_+,\V^*)$, we see that $H^n(\g_-,\V)\cong H^n(\p_+,\V^*)^*$ as $G_0$-modules.
Using the Hodge decomposition (\ref{Hodge}), Kostant described $H^n(\g_-,\V)$ respectively $H^n(\p_+,\V^*)$ for an irreducible representation $\V$ of $G$ explicitly as $G_0$-module.
We will be interested only in the cohomology in degree zero and one. There Kostant's result \cite{Kost} reads as follows:

\begin{thm}\label{Kostant}
Let $\g=\g_-\oplus\g_0\oplus\p_+$ be a complex $|k|$-graded semisimple Lie algebra.
Suppose that $\V$ is a complex irreducible representation of $\g$ with highest weight $\lambda$. For a root $\alpha\in\Delta$ denote the corresponding root reflection by $s_{\alpha}$ and let $\rho$ be the lowest form of $\g$ given by the sum of the fundamental weights of $\g$. Then we have:
\begin{enumerate}
\item{$H^0(\p_+,\V)=\V^{\p_+}:=\{v\in\V:Xv=0 \quad\forall X\in\p_+ \}$. As $\g_0^{ss}$-module $\V^{\p_+}$ is the $\g_0^{ss}$-representation with highest weight $\lambda|_{\h_{0}}$.}
\item{As $\g_0^{ss}$-module the first cohomology space is isomorphic to the following direct sum 
$$H^1(\p_+,\V)\cong\bigoplus_{\alpha\in\Sigma_\p}\F_{\lambda_{s_\alpha}},$$
where $\F_{\lambda_{s_\alpha}}$ denotes the irreducible representation of $\g_0^{ss}$ with highest weight $s_\alpha(\lambda+\rho)-\rho$ restricted to $\h_0$. Moreover, viewing $H^1(\p_+,\V)$ as submodule in $\p_+^*\otimes\V\cong\g_-\otimes\V$, each irreducible component $\F_{\lambda_{s_\alpha}}$ has multiplicity one in $\g_-\otimes\V$ and a highest weight vector of $\F_{\lambda_{s_\alpha}}$ is given by the tensor product of a nonzero element of the root space $\g_{-\alpha}$ and a nonzero weight vector of weight $s_\alpha(\lambda)$.}
\end{enumerate}
\end{thm}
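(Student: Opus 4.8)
The plan is to treat the two statements separately: (1) is an essentially direct computation, while (2) I would obtain from the algebraic Hodge decomposition $(\ref{Hodge})$--$(\ref{isocohom})$ together with Kostant's Casimir description of the Laplacian $\square$.

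\emph{Degree zero.} Since $\partial^*$ vanishes on $\Lambda^0\g_-^*\otimes\V=\V$, the formula for $\partial$ gives immediately $H^0(\p_+,\V)=\ker\big(\partial\colon\V\to\p_+^*\otimes\V\big)=\V^{\p_+}$, which is the first clause of (1). Fixing $\h$ and a positive system making $\p$ standard, the highest weight vector $v_\lambda$ lies in $\V^{\p_+}$, because it is annihilated by every positive root space and $\p_+$ consists of positive root spaces; hence $\V^{\p_+}$ is a nonzero $\g_0$-submodule of $\V$. If $w\in\V^{\p_+}$ is a $\g_0$-highest weight vector, then $w$ is killed by the nilradical of a Borel of $\g_0$ and by $\p_+$, hence by the nilradical of a Borel of $\g$, so $w\in\C v_\lambda$. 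Thus $\V^{\p_+}$ has a one-dimensional space of $\g_0$-highest weight vectors, is therefore $\g_0$-irreducible, and has highest $\h$-weight $\lambda$; by Proposition~\ref{g_0} its highest weight as a $\g_0^{ss}$-module is $\lambda|_{\h_0}$.

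\emph{Degree one.} By $(\ref{isocohom})$ it suffices to identify $\ker(\square)$ inside $\Lambda^1\g_-^*\otimes\V\cong\g_-\otimes\V$. The essential input is Kostant's Laplacian identity: after identifying $\Lambda^n\g_-^*$ with $\Lambda^n\p_+$ via the Killing form, one computes from the explicit formulas for $\partial$ and $\partial^*$ that $2\square$ acts on a $\g_0$-isotypic component of $\Lambda^n\g_-^*\otimes\V$ of highest $\h$-weight $\nu$ by the scalar $\|\lambda+\rho\|^2-\|\nu+\rho\|^2$. Hence $\ker(\square)$ is the sum of the components with $\|\nu+\rho\|^2=\|\lambda+\rho\|^2$; since $\lambda+\rho$ is regular dominant this forces $\nu+\rho=w(\lambda+\rho)$ for a unique $w$ in the Weyl group of $\g$, and Kostant's analysis shows that a component with this weight occurs in $\Lambda^n$ exactly when $w$ has length $n$ and lies in the set $W^{\p}$ of minimal-length coset representatives for the Weyl group of the Levi factor. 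For $n=0$ this gives $W^{\p}$ of length $0$ equal to $\{e\}$, recovering (1); for $n=1$ the length-one elements are simple reflections $s_\alpha$, and $s_\alpha\in W^{\p}$ iff $\alpha$ is not a root of the Levi, i.e.\ iff $\alpha\in\Sigma_\p$ (one checks directly that for $\beta\in\Delta_\l^+$ one has $\langle\beta,\alpha^\vee\rangle\le0$, so $s_\alpha$ keeps all such $\beta$ positive, while $s_\alpha\alpha<0$ if $\alpha\in\Delta_\l$). For $\alpha\in\Sigma_\p$ the resulting weight is $\nu=s_\alpha(\lambda+\rho)-\rho$, and restricting to $\h_0$ yields $H^1(\p_+,\V)\cong\bigoplus_{\alpha\in\Sigma_\p}\F_{\lambda_{s_\alpha}}$.

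\emph{Explicit harmonic vectors and multiplicity one.} For the remaining assertions of (2) I would exhibit the harmonic vector. Pick $0\neq X_{-\alpha}\in\g_{-\alpha}$; since $ht_{\Sigma_\p}(\alpha)=1$ we have $X_{-\alpha}\in\g_{-1}\subset\g_-$. Pick a nonzero weight vector $v\in\V$ of weight $s_\alpha\lambda=\lambda-\langle\lambda,\alpha^\vee\rangle\alpha$, which is unique up to scalar because $s_\alpha\lambda$ is $W$-conjugate to the highest weight $\lambda$. Then $X_{-\alpha}\otimes v\in\g_-\otimes\V\cong\p_+^*\otimes\V$ has $\h$-weight $-\alpha+s_\alpha\lambda=s_\alpha(\lambda+\rho)-\rho$, and a short computation with the formulas for $\partial$ and $\partial^*$ shows it is annihilated by both, hence harmonic; being of extremal weight it is a $\g_0$-highest weight vector of the component $\F_{\lambda_{s_\alpha}}$. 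Finally, the weight $s_\alpha(\lambda+\rho)-\rho$ occurs in $\g_-\otimes\V$ only through tensors $Y\otimes v'$ with $\mathrm{wt}(Y)+\mathrm{wt}(v')$ equal to it, and since $\alpha$ is simple and $s_\alpha\lambda$ is extremal, $X_{-\alpha}\otimes v$ is, up to scalar, the only such vector, so $\F_{\lambda_{s_\alpha}}$ occurs with multiplicity one in $\g_-\otimes\V$. I expect the Laplacian identity of the second step to be the main obstacle: it is the one place where a genuine structural computation with the bracket relations of the $|k|$-grading and the Killing-form duality $\g_-^*\cong\p_+$ is unavoidable; granting it, or citing it from \cite{Kost}, the Weyl-group combinatorics in length one and the weight bookkeeping are routine.
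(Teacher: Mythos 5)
The paper contains no proof of this statement: Theorem \ref{Kostant} is quoted from \cite{Kost}, with only the surrounding Hodge-theoretic machinery (\ref{Hodge})--(\ref{isocohom}) recalled, so there is no argument in the text to measure yours against. What you have written is a reconstruction of Kostant's own proof, and as a sketch it is essentially correct. The degree-zero part is complete and needs no Hodge theory at all: $H^0(\p_+,\V)=\V^{\p_+}$ by definition of the complex, and your observation that any $\g_0$-highest weight vector in $\V^{\p_+}$ is annihilated by the whole nilradical of a Borel of $\g$, hence is a multiple of $v_\lambda$, correctly gives irreducibility and the highest weight $\lambda|_{\h_0}$. The explicit harmonic vector and the multiplicity-one claim are also handled correctly: the key point, which you identify, is that a weight $\mu$ of $\V$ with $\mu-\beta=s_\alpha(\lambda+\rho)-\rho$ for some $\beta\in\Delta(\p_+)$ forces $\beta=\alpha$ (because $\alpha$ is simple) and $\mu=s_\alpha\lambda$ extremal, so the relevant weight space of $\g_-\otimes\V$ is one-dimensional and the component occurs with multiplicity one. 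The only genuinely nontrivial input is the Casimir eigenvalue formula for $\square$ together with Kostant's combinatorial lemma that the $\g_0$-dominant weights $\nu$ occurring in degree $n$ with $\|\nu+\rho\|=\|\lambda+\rho\|$ are exactly the $w(\lambda+\rho)-\rho$ with $w\in W^{\p}$ of length $n$, each occurring once; you state this rather than prove it, but since the paper cites the entire theorem from the same source this costs nothing relative to the text. One notational caution: the complex for $H^*(\p_+,\V)$ has cochains $\Lambda^n\p_+^*\otimes\V\cong\Lambda^n\g_-\otimes\V$, whereas $\Lambda^n\g_-^*\otimes\V$ is the complex for $H^*(\g_-,\V)$; the two are related by $H^n(\g_-,\V)\cong H^n(\p_+,\V^*)^*$, and your intermediate identifications briefly conflate them, although the objects you end up computing with ($X_{-\alpha}\otimes v\in\g_-\otimes\V$) are the correct ones for the statement as given.
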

 
From the Hodge decomposition (\ref{Hodge}) one also deduces that by restricting $\partial$ to $\im(\partial^*)$ respectively $\partial^*$ to $\im(\partial)$ one obtains isomorphisms
$$\partial: \im(\partial^*)\cong\im(\partial) \quad\textrm{ and }\quad \partial^*: \im(\partial)\cong\im(\partial^*).$$
In general, these two maps are not inverse to each other. However, we may define for later purposes the map $$\delta^*:\Lambda^n\g_-^*\otimes \V\rightarrow\Lambda^{n-1}\g_-^*\otimes \V,$$
which is the inverse of $\partial$ on $\im(\partial)$ and zero on $\ker(\partial^*)$. Obviously, we have again $\delta^*\circ\delta^*=0$ and so $\delta^*$ is a differential. Since, by construction, $\delta^*$ differs from
$\partial^*$ on $\im(\partial)$ just by a $G_0$-equivariant isomorphism of $\im(\partial)$, we conclude that $\delta^*$ is as well $G_0$-equivariant. Moreover, it defines the same Hodge decomposition.

\section{Prolongation of overdetermined systems on regular infinitesimal flag manifolds}

In this section we shall now study a large class of semi-linear differential operators between natural vector bundles over regular infinitesimal flag manifolds, which give rise to overdetermined systems. Given such a semi-linear differential operator $D$, we will establish a conceptual method to rewrite the semi-linear system $Ds=0$ as a system of partial differential equations of the form $\tilde\nabla\Sigma+C(\Sigma)=0$, where $\tilde\nabla$ is a linear connection on some vector bundle $V$ over the regular infinitesimal flag manifold $M$ and $C: V\rightarrow T^*M\otimes V$ is a bundle map. 

\subsection{Semi-linear systems on regular infinitesimal flag manifolds corresponding to $|k|$-gradings such that $\z(\g_0)$ is one dimensional}\label{onedim}

Suppose that $\g=\g_-\oplus\g_0\oplus\p_+$ is a $|k|$-graded semisimple Lie algebra where $\z(\g_0)$ is one dimensional. Let $G$ be a simply connected Lie group with Lie algebra $\g$, $P$ a parabolic subgroup with Lie algebra $\p$. Moreover, suppose that $M$ is a manifold endowed with a geometric structure $(\{T^iM\}, \G_0^{ss})$ of type $(G,P)$ as in Section \ref{RegInfFlag}.

\begin{rem}~
\begin{enumerate}
\item{We assume $G$ to be simply connected only to ensure that a representation of $\g$ integrates to a representation of $G$. This will allow us to formulate the result of this section in a uniform way. The condition can be dropped, when one is dealing with some particular representation and group, where one knows that this is the case.}
\item{Concerning the examples mentioned in Section \ref{RegInfFlag}, the condition on the center just excludes the case of partially integrable almost CR-structures of hypersurface type and its real analogue the Lagrangean contact structures  }
\end{enumerate}
\end{rem}

Let now $\E$ be an irreducible representation of $G_0^{ss}$ and $r>0$ some integer. From Proposition \ref{g_0} and Lemma \ref{g_1} we deduce that $\g_{-1}$ is an irreducible representation of $\g_0^{ss}$ and we set $\F:=\circledcirc^r\g_{-1}^*\circledcirc\E$.
For $ g\in G_0$ the graded Lie algebra automorphsim $Ad(g):\g_-\rightarrow\g_-$ lifts by the universal property of the universal enveloping algebra to a graded algebra automorphism $\U(\g_-)\rightarrow\U(\g_-)$. In particular, $\U_{-r}(\g_-)$ can be given the structure of a $G_0$-module and one shows directly that the isomorphism (\ref{symmetrisation3}) between $\mathcal S_{-r}(\g_-)$ and $\U_{-r}(\g_-)$ is $G_0$-equivariant. In particular, we have a $G_0^{ss}$-equivariant
 linear projection $\U_{-r}(\g_-)^*\otimes \E\rightarrow S^r\g_{-1}^*\otimes \E.$ Composing this projection with the projection
$S^r\g_{-1}^*\otimes \E\rightarrow\circledcirc^r \g_{-1}^*\circledcirc\E$, we obtain a $G_0^{ss}$-equivariant linear projection 
$\U_{-r}(\g_-)^*\otimes \E\rightarrow\circledcirc^r \g_{-1}^*\circledcirc\E$ and hence also a corresponding surjective vector bundle map
\begin{equation}\label{naturalprojection}
\U_{-r}(\gr(TM))^*\otimes E\rightarrow F=\circledcirc^r \gr_{-1}(TM)^*\circledcirc E.
\end{equation}
For semi-linear differential operators $D: \Gamma(E)\rightarrow\Gamma(F)$ of weighted order $r$ with weighted symbol given by (\ref{naturalprojection}), i.e. $D$ is of the form $D=D_1+D_2$, where $D_1$ is a linear differential operator of weighted order $r$ with weighted symbol given by (\ref{naturalprojection}) and $D_2$ is a differential operator of weighted order at most $r-1$, we will prove the following theorem:

\begin{thm} \label{main1}
Suppose that $\E$ is a irreducible representation of $G_0^{ss}$ and let $r>0$ be some integer. Set $\F=\circledcirc^r \g_{-1}^*\circledcirc\E$. Then there exists:
\\(1) a natural graded vector bundle
$$V=V_0\oplus...\oplus V_N$$  over $M$ with $V_0=E$
\\(2) for any choice of a principal $G_0^{ss}$-connection $\nabla$ on $\mathcal G_0^{ss}\rightarrow M$ and for any choice of a splitting of the filtration of the tangent
bundle (i.e. an isomorphism $TM\cong\ggr(TM)$ that restricts to a map $T^iM\rightarrow \bigoplus_{j\geq i}\ggr_j(TM)$ and the component in $\ggr_i(TM)$ equals the image of the projection $T^{i}M\rightarrow T^iM/T^{i+1}M$):
\begin{enumerate}
\item[$\bullet$] a linear connection $\widetilde{\nabla}$ on $V$ 
\item[$\bullet$] a linear differential operator
$L: \Gamma(V_0)\rightarrow \Gamma(V)$ of weighted order $N$ satisfying that $p_0(L(s))=s$, where $p_0$ denotes the projection $V\rightarrow V_0=E$,
\end{enumerate}
with the following property:
\\For every semi-linear differential operator $D:\Gamma(E)\rightarrow\Gamma(F)$ of weighted order $r$ with symbol given by the projection (\ref{naturalprojection})
$$\sigma(D):\mathcal U_{-r}(\ggr(TM))^*\otimes E\rightarrow F=\circledcirc^r \ggr_{-1}(TM)^*\circledcirc E.$$
the linear differential operator $L$ induces a bijection between the following sets
$$\{s\in\Gamma(E): D(s)=0\}\leftrightarrow\{\Sigma\in\Gamma(V):(\widetilde{\nabla}+C)(\Sigma)=0\}$$
for some bundle map $C:V\rightarrow T^*M\otimes V$. The inverse is induced by the projection $p_0: V\rightarrow V_0=E$.
\end{thm}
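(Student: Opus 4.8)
The plan is to follow the philosophy of \cite{BCEG} but systematically in the weighted (filtered) setting, using weighted jet bundles as the replacement for ordinary jet bundles. First I would consider the weighted jet bundle sequence from Theorem \ref{Jetvec} and note that the semi-linear operator $D$, being of weighted order $r$ with symbol the projection (\ref{naturalprojection}), has a kernel of the symbol that is a natural subbundle of $\J^r(E)$. Concretely, I would want to identify the relevant prolongation object: the subbundle of $\J^r(E)$ consisting of weighted $r$-jets annihilated by $\sigma(D)$. Using the $G_0^{ss}$-equivariant identifications from Section \ref{naturalvec} together with the symmetrisation isomorphism (\ref{symmetrisation3}), the fibre of $\J^r(E)$ decomposes as $\bigoplus_{i=0}^r \U_{-i}(\g_-)^*\otimes\E \cong \bigoplus_{i=0}^r \mathcal S_{-i}(\g_-)^*\otimes\E$, and the top piece $\mathcal S_{-r}(\g_-)^*\otimes\E$ maps onto $\F = \circledcirc^r\g_{-1}^*\circledcirc\E$. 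The kernel of this projection, taken together with the lower jet pieces, is the natural candidate for (a first approximation of) the prolonged bundle.

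\textbf{Key steps.}
I would organise the argument into the following steps. (i) \emph{Algebraic prolongation.} Working purely representation-theoretically with the $\g_0^{ss}$-modules, define $V_i$ inductively: $V_0 = \E$, and $V_{i+1}$ as the kernel of the appropriate symbol-type map built from the $\g_{-1}$-action and the Cartan projections, so that the prolongation stabilises after finitely many steps $N$ (finiteness is where weighted finite type enters, and this should follow from Kostant's theorem, Theorem \ref{Kostant}, bounding the relevant cohomology). Here one must be careful: because $\g_-$ is filtered rather than graded as an ``acting'' object, the prolongation must be organised by \emph{weighted} degree, using $\U_{-i}(\g_-)$ rather than $S^i\g_{-1}$, and the identification (\ref{symmetrisation3}) is what makes the bookkeeping work. (ii) \emph{The differential operator $L$.} Given a principal $G_0^{ss}$-connection $\nabla$ and a splitting $TM\cong\gr(TM)$, one gets a way of covariantly differentiating sections in directions of $\gr_{-1}(TM)$ and, by iteration, of building from any $s\in\Gamma(E)$ a section of $\bigoplus_i \mathcal S_{-i}(\gr(TM))^*\otimes E$; composing with the projections to $V_i$ produces $L:\Gamma(V_0)\to\Gamma(V)$ of weighted order $N$ with $p_0\circ L = \mathrm{id}$. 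One checks that when $D(s)=0$ the image $L(s)$ lands in a subbundle on which a modified connection can be defined. (iii) \emph{Constructing $\tilde\nabla$ and $C$.} The naive connection induced by $\nabla$ and the splitting does not preserve $V$; the correction is to add an algebraic term built from $\partial^*$ (or rather the $G_0$-equivariant $\delta^*$ from Section \ref{LieAlgCoh}) together with the curvature of $\nabla$, the torsion of the splitting, and the lower-order part $D_2$ of the operator. The Hodge decomposition (\ref{Hodge}) and the isomorphism $\partial:\im(\partial^*)\to\im(\partial)$ are exactly what guarantee that the obstruction to $L(s)$ being $\tilde\nabla$-parallel can be absorbed into the bundle map $C$; this is the standard ``curved Casimir / $\partial^*$-gauge'' trick of \cite{BCEG} adapted to weighted jets. (iv) \emph{The bijection.} Finally one shows $p_0$ and $L$ are mutually inverse between $\{D(s)=0\}$ and $\{(\tilde\nabla+C)\Sigma=0\}$: a parallel $\Sigma$ has $p_0(\Sigma)$ solving $Ds=0$ because the $V_1$-component of the parallelity equation reproduces $Ds=0$ up to the lower-order terms already encoded in $C$, and $L(p_0(\Sigma))=\Sigma$ by uniqueness of the prolongation.

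\textbf{Main obstacle.}
The hard part will be step (iii): showing that the correction term $C$ can be chosen as an honest \emph{bundle map} (tensorial, weighted order zero) rather than a differential operator, and that the resulting $\tilde\nabla+C$ genuinely has the claimed parallel sections. This requires a careful analysis of how the curvature of $\nabla$, the ``torsion'' of the chosen splitting $TM\cong\gr(TM)$, and the Levi bracket $\mathcal L$ interact with the prolongation spaces $V_i$ under the weighted grading — in particular one must verify that all the terms that arise in iterating $\tilde\nabla$ on $L(s)$ and could a priori raise weighted order are exactly the ones killed by the algebraic prolongation conditions defining the $V_i$, so that what remains is tensorial. A secondary subtlety is bounding $N$ and proving the prolongation terminates: one needs that $\circledcirc^r\g_{-1}^*\circledcirc\E$ being a single Cartan-product component forces, via Kostant (Theorem \ref{Kostant}) applied to the module generated by $\E$, that the higher prolongations $V_i$ vanish for $i$ large, which is the representation-theoretic incarnation of ``weighted finite type.'' The role of $\z(\g_0)$ being one-dimensional is to make $\circledcirc^r\g_{-1}^*\circledcirc\E$ well-defined and irreducible (so that $\g_{-1}$ itself is $\g_0^{ss}$-irreducible by Lemma \ref{g_1}), which keeps the symbol and hence the whole construction manageable.
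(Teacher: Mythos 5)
Your overall architecture (splitting operator $L$, modified connection $\tilde\nabla=\nabla+\partial$, absorption of the error term via $\delta^*$ and the Hodge decomposition, then conversion of the resulting differential operator into a bundle map $C$) matches the paper's Steps 2 and 3, and you correctly identify the conversion of $B$ into a tensorial $C$ as the delicate point. However, there is a genuine gap in your Step (i), and it is the central idea of the paper's proof that is missing: the bundle $V$ is \emph{not} constructed by inductive Spencer-type prolongation. Instead (Proposition \ref{V}) one uses Kostant's theorem to produce a single finite-dimensional irreducible $\g$-representation $\V=\V[\E,r]$ characterised by $H^0(\g_-,\V)\cong\E$ and $H^1(\g_-,\V)\cong\circledcirc^r\g_{-1}^*\circledcirc\E$ (explicitly, the dual of the irreducible with highest weight $\lambda+(r-1)\omega_{\alpha_j}$), and the $\V_i$ are the eigenspaces of the grading element $e\in\z(\g_0)$. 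Only \emph{a posteriori} (Proposition \ref{U_iV_0}) are these identified with the weighted prolongation spaces $(\U_{-i}(\g_-)^*\otimes\E)\cap(\U_{-(i-r)}(\g_-)^*\otimes\K)$.

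This is not a cosmetic difference, because everything in your Steps (ii)--(iv) presupposes structure that only exists once $V$ is the associated bundle of a $\g$-module: the Lie algebra cohomology differential $\partial$ and the codifferential $\delta^*$ are maps on $\Lambda^n\g_-^*\otimes\V$ for a $\g_-$-representation $\V$, and the Hodge decomposition (\ref{Hodge}) lives there too. If the $V_i$ are defined only as abstract kernels of symbol maps, you have neither $\partial$ nor $\delta^*$ on $V$, so $\tilde\nabla=\nabla+\partial$ and the operator $\delta^*\tilde\nabla$ (whose invertibility on $\im(\delta^*)$ is what defines $L$ via a von Neumann series) are not available. Moreover, your termination argument --- that Kostant's theorem ``bounds the relevant cohomology'' and hence the prolongation stabilises --- is not a proof: Kostant computes $H^0$ and $H^1$ of a \emph{given} $\g$-representation, and to extract finiteness of the prolongation tower from it you must first identify that tower with the grading components of such a representation, which is exactly the content of Propositions \ref{V} and \ref{U_iV_0}. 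In the paper, finiteness of $N$ is automatic because $\V$ is finite dimensional. So the missing ingredient you would need to supply is precisely the existence and grading-element decomposition of $\V[\E,r]$; once that is in place, the rest of your outline tracks the paper's argument.
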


To prove this theorem we proceed in three steps:

\subsection*{1. Step - The construction of the graded vector bundle $V$}

From Theorem \ref{Kostant} one immediately deduces the following proposition:

\begin{prop}\label{V}
Suppose that $\g$ is a $|k|$-graded semisimple Lie algebra such that the center of $\g_0$ is one dimensional. Let $\E$ be an irreducible representation of $\g_0^{ss}$ and $r>0$ an integer. Then there exists an irreducible representation $\V=\V[\E,r]$ of $\g$ such that as $\g_0^{ss}$-modules we have $$H^0(\g_-,\V)\cong\E\quad\textrm{ and }\quad H^1(\g_-,\V)\cong\circledcirc^r \g_{-1}^*\circledcirc\E.$$ 
\end{prop}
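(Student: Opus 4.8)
The plan is to read $\V=\V[\E,r]$ off Kostant's theorem (Theorem~\ref{Kostant}); the decisive extra input is that $\dim\z(\g_0)=1$ forces $\Sigma_\p$ to consist of a single simple root $\alpha$ (Proposition~\ref{g_0}(2)), so that $H^1$ comes out \emph{irreducible} rather than a genuine direct sum and can therefore match $\circledcirc^r\g_{-1}^*\circledcirc\E$ on the nose. Throughout $\g$ may be taken complex, the real case following by complexification. Under this hypothesis $\g_{-1}$ is, by Lemma~\ref{g_1}, the irreducible $\g_0^{ss}$-module of highest weight $-\alpha|_{\h_0}$, one has $\omega_\alpha|_{\h_0}=0$, and the remaining fundamental weights of $\g$ restrict to $\h_0$ as the fundamental weights of $\g_0^{ss}$ (Remark~\ref{remweight}). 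Since the target cohomology is prescribed for $\g_-$, I would pass through the isomorphism $H^n(\g_-,\V)\cong H^n(\p_+,\V^*)^*$ and construct $\W:=\V^*$ instead: using that Cartan products commute with dualisation and $(\g_{-1}^*)^*=\g_{-1}$, the two isomorphisms wanted for $\V$ are equivalent to $H^0(\p_+,\W)\cong\E^*$ and $H^1(\p_+,\W)\cong\circledcirc^r\g_{-1}\circledcirc\E^*$.

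Next I would determine the highest weight $\lambda$ of $\W$. By Theorem~\ref{Kostant}(1) the $\g_0^{ss}$-module $H^0(\p_+,\W)$ has highest weight $\lambda|_{\h_0}$, so the condition $H^0(\p_+,\W)\cong\E^*$ fixes every coefficient of $\lambda$ at a fundamental weight other than $\omega_\alpha$ — namely as the corresponding coefficient of the highest weight of $\E^*$ — and, because $\omega_\alpha|_{\h_0}=0$, leaves the coefficient $c$ of $\omega_\alpha$ entirely free. Since $\Sigma_\p=\{\alpha\}$, Theorem~\ref{Kostant}(2) gives that $H^1(\p_+,\W)$ is the single irreducible $\g_0^{ss}$-module $\F_{\lambda_{s_\alpha}}$, of highest weight $(s_\alpha(\lambda+\rho)-\rho)|_{\h_0}$; as $\rho$ is the sum of the fundamental weights one has $s_\alpha(\lambda+\rho)-\rho=\lambda-(c+1)\alpha$, so this highest weight equals the highest weight of $\E^*$ plus $(c+1)$ times the highest weight of $\g_{-1}$ (here using Lemma~\ref{g_1}, i.e.\ that $-\alpha|_{\h_0}$ is the highest weight of $\g_{-1}$). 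The highest weight of $\circledcirc^r\g_{-1}\circledcirc\E^*$ is the highest weight of $\E^*$ plus $r$ times the highest weight of $\g_{-1}$; since a Cartan product is the unique irreducible summand with its highest weight, the $H^1$-condition therefore holds exactly when $c=r-1$. As $r>0$ this $c$ is a non-negative integer, so $\lambda$ is $\g$-dominant integral, the irreducible $\g$-module $\W$ exists, and $\V[\E,r]:=\W^*$ has the two stated cohomologies after dualising back.

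I expect the second step to be the only real work: one must keep straight the passages between $\g_-$- and $\p_+$-cohomology, and thus between $\V,\E$ and $\V^*,\E^*$, so that it is genuinely $\circledcirc^r\g_{-1}^*\circledcirc\E$ — and not $\circledcirc^r\g_{-1}\circledcirc\E^*$ — that appears at the end; and one must verify that the coefficient the $H^1$-condition forces is a \emph{non-negative} integer. This last point is precisely where the hypothesis $r>0$ enters, since it is what makes $\lambda$ genuinely $\g$-dominant, so that $\W$, hence $\V[\E,r]$, actually exists.
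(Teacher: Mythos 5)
Your proposal is correct and follows essentially the same route as the paper: pass to $\V^*$ and $H^*(\p_+,\V^*)$, use Kostant's theorem together with Lemma~\ref{g_1} and the fact that $\omega_\alpha|_{\h_0}=0$, and land on the highest weight $\lambda+(r-1)\omega_\alpha$ for $\V^*$. The only difference is presentational — you solve for the free coefficient $c$ and find $c=r-1$, while the paper posits this weight outright and verifies the two cohomologies — and your explicit check that $c=r-1\geq 0$ guarantees dominance is a point the paper leaves implicit.
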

  
\begin{proof}
If the restriction of $\lambda=\sum_{i\in I\backslash J}a_i \omega_{\alpha_i}$ to $\h_0$  is the highest weight of the dual representation $\E^*$ (see remark \ref{remweight} for the notation), then define $\V=\V[\E,r]$ to be the irreducible representation of $\g$, whose dual representation has highest weight $\mu:=\lambda+(r-1)\omega_{\alpha_j}\in\h^*$, where $\omega_{\alpha_j}$ is the fundamental weight corresponding to the simple root  $\alpha_j$ in $\Sigma_\p$. From Theorem \ref{Kostant} we deduce that $H^0(\p_+,\V^*)\cong\E^*$ and that $H^1(\p_+,\V^*)$ is the irreducible representation of $\g_0^{ss}$ with highest weight $s_{\alpha_{j}}(\mu+\rho)-\rho$ restricted to $\h_0$, which equals the restriction of $\lambda-r\alpha_j$ to $\h_0$. From Lemma \ref{g_1} we therefore conclude that $H^1(\p_+,\V^*)\cong\circledcirc^r \g_{-1}\circledcirc\E^*$. Since $H^*(\g_-,\V)\cong H^*(\p_+.\V^*)^*$ as $\g_0^{ss}$-modules, the claim follows.
\end{proof}

Suppose that we have fixed $\E$ and $r>1$ and let $\V=\V[\E,r]$ be the irreducible representation of $\g$ from Proposition \ref{V}. There always exists a unique element $e\in\z(\g_0)\subset\g$, whose adjoint action represents the grading on $\g$, i.e.
$[e,X]=jX \textrm{ for } X\in\g_j,$ see \cite{Yam} or \cite{CSbook}.
In particular, it acts diagonalisably on $\g$ and therefore on any finite dimensional representation of $\g$.  So we can decompose $\V$ into eigenspaces for the action of the grading element $e$ on $\V$. Observe that for an eigenvector $v$ with eigenvalue $c$ and $X\in\g_j$ the vector $X\cdot v$ is eigenvector with eigenvalue $c+j$, since $e\cdot X\cdot v=X\cdot e\cdot v+[e,X]\cdot v$. Therefore, denoting by $c$ the eigenvalue with smallest real part, it follows from the irreducibility of $\V$ that the set of eigenvalues is given by $\{c,c+1,...,c+N-1\}$ for some $N\geq 1$. For $0\leq i\leq N$ let $\V_i$ be the eigenspace to the eigenvalue $c+i$ and set $\V_i=0$ for $i<0$ or $i>N$. Then we obtain a decomposition of $\V$ as follows:

\begin{equation}\label{decomV1}
\V=\V_0\oplus...\oplus\V_N \quad\textrm{ such that }\quad \g_i\cdot\V_j\subseteq\V_{i+j} \quad\textrm{ for all } i, j\in\Z
\end{equation}

In particular, each subspace $\V_i$ is invariant under the action of $\g_0$ respectively under the action of $G_0$.
\\Moreover, the gradings on $\g_-$ and $\V$ induce a grading on the space $\Lambda^n\g_-^*\otimes\V$, where the $i$-th grading component is given by
\begin{equation}\label{gradLambda}
(\Lambda^n\g_-^*\otimes\V)_i=\bigoplus_{t=n}^{nk}(\Lambda^n_{-t}\g_-)^*\otimes\V_{i-t},
\end{equation}
with $$\Lambda^n_{-t}\g_-=\bigoplus_{i_1+...+i_n=-t}\g_{i_1}\wedge...\wedge\g_{i_n}.$$
It follows immediately from (\ref{decomV1}) that the Lie algebra  differential $\partial$ is grading preserving. 
We will denote the restriction of $\partial$ to the $i$-th grading component by $\partial_i$.
\\By (\ref{decomV1}) the $G_0^{ss}$-invariant subspace $\V_0\subset\V$ is contained in $\ker(\partial)$. Since $\ker(\partial)=H^0(\g_-,\V)$ is an irreducible representation of $G_0^{ss}$ we conclude that as $G_0^{ss}$-modules
\begin{equation}\label{decomV3}
\V_0=\ker(\partial)=H^0(\g_-,\V)\cong\E
\end{equation}

In particular, we see that $\partial_i: \V_i\rightarrow  \bigoplus_{t=1}^k\g_{-t}^*\otimes\V_{i-t}$ is injective for $i>0$.
\\Now consider the Hodge decomposition (\ref{Hodge}) of Section \ref{LieAlgCoh}:
$$\g_-^*\otimes\V=\im(\partial)\oplus\ker(\square)\oplus\im(\delta^*)=\ker(\partial)\oplus\im(\delta^*).$$ Since $\delta^*$ is also obviously compatible with the gradings on the cochain spaces, the same holds for $\square$.
Therefore we obtain that $$(\g_-^*\otimes\V)_i=\im(\partial_i)\oplus\ker(\square_i)\oplus\im(\delta^*_i)=\ker(\partial_i)\oplus\im(\delta^*_i).$$
Since $H^1(\g_-,\V)\cong\ker(\square)$, the first cohomolgy $H^1(\g_-,\V)$ may be viewed as a $G_0^{ss}$-submodule of  $\g_-^*\otimes\V$.  
We know from Theorem \ref{Kostant} that $\circledcirc^{r}\g_{-1}^*\circledcirc\mathbb E\cong H^1(\g_-,\V)$ has multiplicity one in $\g_-^*\otimes\V$. Using Theorem \ref{Kostant} we can even determine the grading component, in which $\circledcirc^{r}\g_{-1}^*\circledcirc\mathbb E$ is lying.
In fact, by Theorem \ref{Kostant} a highest weight vector of the irreducible representation $\circledcirc^{r}\g_{-1}\circledcirc\mathbb E^*\cong H^1(\p_+,\V^*)$ viewed as a submodule in $\p_+^*\otimes\V^*\cong \g_-\otimes\V^*$ is of the form $X\otimes v$, where $X\in\g_{-\alpha_j}$ and $v\in\V^*$ is a weight vector of weight $s_{\alpha_j}(\mu)=\mu-(r-1)\alpha_j$, where $\mu$ is the highest weight of $\V^*$. It can be easily seen that $\V^*_{\ell}$ consists of all those weight spaces of $\V^*$ corresponding to weights of the form $\mu-\ell \alpha_j-\sum_{i\in I\setminus \{j\}} n_i\alpha_i$, where $n_i\in\N_0$ and $I$ is the index set of the simple roots of $\g$ as in Section \ref{naturalvec}. This implies that the irreducible component $\circledcirc^{r}\g_{-1}\circledcirc\mathbb E^*$ lies in $\g_{-1}\otimes \V_{r-1}^*$. Since $H^1(\p_+,\V^*)^*$ is isomorphic to $H^1(\g_-,\V)$, we obtain that
$$\circledcirc^{r}\g_{-1}^*\circledcirc\mathbb E\cong\ker(\square)=\ker(\square_r)\subset \g_{-1}^*\otimes\V_{r-1}.$$
In particular, for $0<i< r$ we therefore have the following exact sequence
\begin{equation}\label{H1V}
\xymatrix{
0\ar[r] & \V_i\ar[r]^{\partial_i\quad\quad\quad} &  \bigoplus_{s=1}^k\g_{-s}^*\otimes\V_{i-s}\ar[r]^{\partial_i\quad}& \bigoplus_{t=2}^{2k}(\Lambda^2_{-t}\g_-)^*\otimes\V_{i-t}.\\}
\end{equation}

\begin{prop}\label{U_iV_0}
For $0\leq i\leq N$ there exist $G_0$-equivariant inclusions $$\phi_i: \V_i\hookrightarrow\U_{-i}(\g_-)^*\otimes\V_0.$$
For $i<r$ these inclusions are even isomorphisms $\phi_i: \V_i\cong\U_{-i}(\g_-)^*\otimes\V_0.$
\end{prop}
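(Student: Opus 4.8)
The plan is to produce $\phi_i$ directly from the $\U(\g_-)$-module structure of $\V$, to deduce injectivity by a duality argument, and to deduce bijectivity for $i<r$ by an induction that compares the Chevalley--Eilenberg complex of $\V$ with a Koszul resolution, via the exact sequence (\ref{H1V}). For the definition: by (\ref{decomV1}) the $\g_-$-action satisfies $\g_{-s}\cdot\V_j\subseteq\V_{j-s}$, hence $\U_{-i}(\g_-)\cdot\V_i\subseteq\V_0$, and we set $\phi_i(v)(u):=u\cdot v$ for $v\in\V_i$, $u\in\U_{-i}(\g_-)$, so that $\phi_i\colon\V_i\to\U_{-i}(\g_-)^*\otimes\V_0$. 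Since the $\g_-$-action on $\V$ is the restriction of the $\g$-action and $G_0\subseteq G$, this map is $G_0$-equivariant.

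For injectivity I would pass to the transpose $\phi_i^*\colon\U_{-i}(\g_-)\otimes\V_0^*\to\V_i^*$, given by $\phi_i^*(u\otimes\lambda)(v)=\lambda(u\cdot v)$. Composing with the principal anti-automorphism of $\U(\g_-)$ (which preserves $\U_{-i}(\g_-)$) turns $\phi_i^*$ into the $e$-degree $i$ component of the natural map $\U(\g_-)\otimes\V_0^*\to\V^*$, $u\otimes\lambda\mapsto u\cdot\lambda$. Now $\V_0^*$ is a $G_0$-submodule of $\V^*$ (the $e$-eigenspace decomposition of $\V^*$ is $G_0$-stable), and by (\ref{decomV3}) together with Theorem \ref{Kostant}(1) it contains a highest weight vector $v^*$ of $\V^*$. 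Since $\V^*$ is generated over $\g$ by $v^*$ and $\U(\g)=\U(\g_-)\U(\g_0)\U(\p_+)$ with $\U(\p_+)v^*=\C v^*$ and $\U(\g_0)v^*\subseteq\V_0^*$, we obtain $\V^*=\U(\g_-)\cdot\V_0^*$, and comparing $e$-eigenspaces this forces $\V_i^*=\U_{-i}(\g_-)\cdot\V_0^*$ for every $i$. Hence the natural map, and therefore $\phi_i^*$, is surjective and $\phi_i$ is injective, which proves the first assertion for all $0\le i\le N$.

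For the isomorphism statement I would induct on $i$, the case $i=0$ being the canonical identification $\V_0\cong\K\otimes\V_0$. Assume $\phi_j$ is an isomorphism for all $j<i\le r-1$. The family $\{\phi_j\}$ assembles into the $\g_-$-module homomorphism $\Phi\colon\V\to\U(\g_-)^\vee\otimes\V_0$ ($\U(\g_-)^\vee$ the graded dual) corresponding, under Frobenius reciprocity for co-induction from the trivial subalgebra, to the projection $p_0\colon\V\to\V_0$; in particular $\mathrm{id}\otimes\Phi$ is a morphism of Chevalley--Eilenberg complexes $\Lambda^\bullet\g_-^*\otimes\V\to\Lambda^\bullet\g_-^*\otimes(\U(\g_-)^\vee\otimes\V_0)$. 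In $\z(\g_0)$-degree $i$ the cochain spaces in cochain degrees $1$ and $2$ involve only the summands $\V_{i-s}$ with $s\ge1$, so by the inductive hypothesis $\mathrm{id}\otimes\Phi$ is an isomorphism on them. On the one hand, (\ref{H1V}) identifies $\V_i$ with the $e$-degree $i$ part of $\ker(\partial\colon\g_-^*\otimes\V\to\Lambda^2\g_-^*\otimes\V)$; on the other hand, the Chevalley--Eilenberg complex of $\U(\g_-)^\vee\otimes\V_0$ is $\mathrm{Hom}_\K(P_\bullet,\V_0)$ for the Koszul resolution $P_\bullet\to\K$ of the trivial $\U(\g_-)$-module, so its cohomology is $\V_0$ concentrated in cochain degree $0$, whence in positive $\z(\g_0)$-degree $i$ the analogous kernel is $(\U(\g_-)^\vee\otimes\V_0)_i=\U_{-i}(\g_-)^*\otimes\V_0$. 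Transporting through $\mathrm{id}\otimes\Phi$ gives $\dim\V_i=\dim\U_{-i}(\g_-)^*\otimes\V_0$, and the injective map $\phi_i$ is then an isomorphism.

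I expect the last step to need the most care: one must check that $\Phi$ is genuinely $\g_-$-linear for the correct module structure on $\U(\g_-)^\vee\otimes\V_0$ (so that $\mathrm{id}\otimes\Phi$ really is a chain map), and one must consistently keep the cohomological degree and the $\z(\g_0)$-weight apart throughout. Once this bookkeeping is in place the required vanishing of the higher cohomology of $\U(\g_-)^\vee\otimes\V_0$ is nothing but exactness of the Koszul complex, and the only inputs from Kostant's theorem are $\V_0\cong H^0(\g_-,\V)$ and the exact sequence (\ref{H1V}).
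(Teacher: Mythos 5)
Your proposal is correct, and it splits into two halves that relate differently to the paper's proof. For the isomorphism statement ($i<r$) you are doing essentially what the paper does: the paper's commutative diagram with the maps $\tilde\partial_i$ on $\bigoplus_s(\g_{-s}\otimes\U_{-(i-s)}(\g_-))^*\otimes\V_0$ is precisely the $e$-degree-$i$ part of the Chevalley--Eilenberg complex of the co-induced module $\mathrm{Hom}(\U(\g_-),\V_0)$ in cochain degrees $0,1,2$, and the identification $\ker(\tilde\partial_i)=\U_{-i}(\g_-)^*\otimes\V_0$ via the presentation (\ref{induktiv}) is exactly your Koszul-exactness input; both arguments then combine the exact sequence (\ref{H1V}) with the inductive hypothesis that the vertical maps in cochain degrees $1$ and $2$ are isomorphisms. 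Where you genuinely diverge is injectivity: the paper extracts it from the same inductive diagram (injectivity of $\partial_i$ on $\V_i$ from (\ref{decomV3}) together with injectivity of $\imath$ from the inductive hypothesis), whereas you dualise and observe that $\V^*=\U(\g_-)\cdot\V_0^*$ because the highest weight vector of $\V^*$ lies in $\V_0^*=(\V^*)^{\p_+}$ and PBW gives $\U(\g)=\U(\g_-)\U(\g_0)\U(\p_+)$; comparing $e$-eigenspaces then yields surjectivity of $\phi_i^*$ for every $i$ at once. Your route buys a non-inductive, purely representation-theoretic proof of the first assertion, making transparent why it holds for all $i\le N$ and not just $i<r$, at the cost of invoking highest-weight generation of $\V^*$; the paper's route is more self-contained, using only the already established facts $\V_0=\ker(\partial)$ and (\ref{H1V}). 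The bookkeeping you flag does check out: with your convention $\phi_i(v)(u)=u\cdot v$ the map $\Phi$ is $\g_-$-linear for the co-induced action $(X\cdot f)(u)=f(uX)$, so $\mathrm{id}\otimes\Phi$ is a chain map. Note that the paper instead takes $\phi_i(v)(u)=-u^{\top}v$, which differs from your map by the $G_0$-equivariant automorphism $u\mapsto -u^{\top}$ of $\U_{-i}(\g_-)$ and hence changes nothing about injectivity or bijectivity.
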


\begin{proof}
By means of restriction we can view $\V$ as a representation of $\g_-$ or equivalently as a $\U(\g_-)$-module. From (\ref{decomV1}) we conclude that $$\U_{-i}(\g_-)\V_i\subseteq\V_0 \quad\quad\textrm{ for all } 0\leq i\leq N.$$ Now we define $\phi_i$ by 
$$\phi_i: \V_i\rightarrow \U_{-i}(\g_-)^*\otimes\V_0$$ 
$$v\mapsto(u\mapsto-u^{\top}v),$$
where $u\mapsto u^{\top}$ is the unique anti-automorphism of $\U(\g_-)$ such that $X^{\top}$=-X for $X\in\g_-$, see e.g. \cite{Dixmier}. It satisfies $(X_1X_2...X_n)^{\top}=(-1)^nX_nX_{n-1}...X_1$ for $X_1,...,X_n\in\g_-$. 
Observing that
\begin{equation}\label{induktiv}
\U_{-i}(\g_-) = \bigoplus_{j=1}^k\g_{-j}\otimes\U_{-(i-j)}(\g_-)/\mathcal J_i
\end{equation}
\begin{equation*}
\mathcal J_i = <X\otimes Yu-Y\otimes Xu-[X,Y]\otimes u: X\in\g_{-p},Y\in\g_{-q}, u\in\U_{-(i-p-q)}(\g_-)>
\end{equation*}
we can prove by induction on $i$ that all $\phi_i$ are injective. 
\\For $i=0$ the result holds, since $\phi_0=-id$.
\\The map $\phi_1: \V_1\rightarrowÊ \g_{-1}^*\otimes \V_0$ equals $\partial_{1}: \V_1\rightarrow \g_{-1}^*\otimes \V_0$, which is injective by (\ref{decomV3}) and so the result holds also for $i=1$ .
\\Now suppose that $\phi_j$ is injective for all $j<i$ and consider the following commutative diagram:
\begin{equation*}
\xymatrix{ \V_i \ar[r]^{\partial_i\quad\quad}\ar[d]_{id} & \bigoplus_{s=1}^k \g_{-s}^*\otimes\V_{i-s}\ar[r]^{\partial_i}\ar[d]_{\imath} &
\bigoplus_{t=2}^{2k} (\Lambda^2_{-t}\g_-)^*\otimes\V_{i-t}\ar[d]_{\jmath}\\
Ê\V_i\ar[r]^{\tilde\partial_i\quad\quad\quad\quad\quad\quad} & \bigoplus_{s=1}^k (\g_{-s}\otimes\U_{-(i-s)}(\g_-))^*\otimes \V_0 \ar[r]^{\tilde\partial_i\quad} & \bigoplus_{t=2}^{2k} (\Lambda^2_{-t}\g_-\otimes\U_{-(i-t)}(\g_-))^*\otimes\V_0\\ }
\end{equation*} 
where
\begin{align}
&\imath(f)(X\otimes u)=u^{\top}f(X)=-\phi_{i-s}(f(X))(u)\nonumber\\
&\qquad \textrm{ for } X\otimes u\in \g_{-s}\otimes\U_{-(i-s)}(\g_-)\nonumber\\
&\jmath(g)(X\wedge Y\otimes u)=u^{\top}g(X\wedge Y)=-\phi_{i-t}(g(X\wedge Y))(u)\nonumber\\
&\qquad\textrm{ for } X\wedge Y\otimes u\in \Lambda^2_{-t}\g_-\otimes\U_{-(i-t)}(\g_-)\nonumber\\
&\partial_i(v)(X)=Xv\quad \textrm{ for } X\in\mathfrak g_{-s}\nonumber\\
&\partial_i(f)(X\wedge Y)=Xf(Y)-Yf(X)-f([X,Y])\nonumber\\
&\qquad \textrm{ for } X\wedge Y\in \Lambda^2_{-t}\g_-\nonumber\\
&\tilde\partial_i(h)(X\wedge Y\otimes u)=h(X\otimes Yu)-h(Y\otimes Xu)-h([X,Y]\otimes u) \nonumber\\
&\qquad\textrm{ for } X\wedge Y\otimes u\in \Lambda^2_{-t}\g_-\otimes\U_{-(i-t)}(\g_-).\nonumber
\end{align}

Since $\partial\circ\partial=0$, the commutativity of the diagram implies that the composition $\imath\circ\partial_i$ has values in the kernel of $\tilde\partial_i$. By (\ref{induktiv}) the kernel $\ker(\tilde\partial_i)$ coincides with $\U_{-i}(\g_-)^*\otimes\V_0\subset\bigoplus_{s=1}^k(\g_{-s}\otimes\U_{-(i-s)}(\g_-))^*\otimes \V_0$ and so we have 
$$\tilde\partial_i=\imath\circ\partial_i: \V_i\rightarrow \U_{-i}(\g_-)^*\otimes\V_0.$$
Moreover, since $(\imath\circ\partial_i)(v)(X\otimes u)=u^{\top}(Xv)=-(Xu)^{\top}v$, wee see that $\imath\circ\partial_i=\phi_i.$
We know by (\ref{decomV3}) that $\partial_i: \V_i\rightarrowÊ \bigoplus_{s=1}^k\g_{-s}^*\otimes\V_{i-s}$ is injective and by induction hypothesis also $\imath$ is injective. Therefore we have that
\begin{equation*}
\phi_i: \V_i\stackrel{\partial_i}{\cong}\im(\partial_i)\stackrel{\imath}{\hookrightarrow}\U_{-i}(\g_-)^*\otimes\V_0
\end{equation*}
is injective and so the first assertion holds.
\\Since for $0< i<r$ we have by (\ref{H1V}) an exact sequence
\begin{equation*}
\xymatrix{
0\ar[r] & \V_i\ar[r]^{\partial_i\quad\quad\quad} &Ê \bigoplus_{s=1}^k\g_{-s}^*\otimes\V_{i-s}\ar[r]^{\partial_i\quad}&
\bigoplus_{t=2}^{2k}(\Lambda^2_{-t}\g_-)^*\otimes\V_{i-t},\\}
\end{equation*}
it follows by induction from the commutative diagram above that
\begin{equation*}
\phi_i: \V_i\stackrel{\partial_i}{\cong}\ker(\partial_i)\stackrel{\imath}{\cong} \U_{-i}(\g_-)^*\otimes \V_0
\end{equation*}
is an isomorphism for $0< i<r$.
\end{proof}

Since $\phi_i$ is an isomorphism for $i<r$, one deduces from the commutative diagram of the proof of Proposition \ref{U_iV_0} for $i=r$ that
$$\V_r\stackrel{\partial_r}{\cong}\im(\partial_r)\subset \ker(\partial_r)=\im(\partial_r)\oplus\ker(\square_r)\stackrel{\imath}{\cong}\ker(\tilde\partial_r)=\U_{-r}(\g_-)^*\otimes\V_0.$$
Note that the map $\imath$ viewed as a map $$\imath :\bigoplus_{s=1}^k\g_{-s}^*\otimes\V_{r-s}\rightarrow\bigoplus_{s=1}^k\g_{-s}^*\otimes\U_{-(r-s)}(\g_-)^*\otimes\V_0$$
can be written $$\imath=\sum_{s=1}^k-id\otimes\phi_{r-s}$$ and therefore the isomorphism induced by $\imath$ between $\ker(\square_r)=\ker(\square)$ and $\circledcirc^{r}\g_{-1}^*\circledcirc\mathbb E$
is given by

\begin{equation}\label{Isocohom}
\ker(\square)\hookrightarrow\g_{-1}^*\otimes\V_{r-1}\stackrel{-id\otimes\phi_{r-1}}{\cong}\g_{-1}^*\otimes\U_{r-1}(\g_-)^*\otimes\E\rightarrow \g_{-1}^*\otimes S^{r-1}\g_{-1}^*\otimes\E\rightarrow \circledcirc^{r}\g_{-1}^*\circledcirc\mathbb E.
\end{equation}

We conclude that we obtain a $G_0^{ss}$-equivariant isomorphism
$$\phi_r: \V_r \cong(\U_{-r}(\g_-)^*\otimes\E)\cap \K,$$
where  $\K\subset\U_{-r}(\g_-)^*\otimes\E $ denotes the kernel of the $G_0^{ss}$-equivariant projection
$\U_{-r}(\g_-)^*\otimes\E\rightarrow \circledcirc^{r}\g_{-1}^*\circledcirc\mathbb E$.
Since $\ker(\square)=\ker(\square_r)$, it follows by induction as in the proof of the Proposition \ref{U_iV_0} that  we have $G_0^{ss}$-equivariant isomorphisms
$$\phi_i: \V_i \cong(\U_{-i}(\g_-)^*\otimes\E)\cap (\U_{-(i-r)}(\g_-)^*\otimes \K)\quad\textrm{ for } \quad i\geq r.$$
Now we define $V$ as the graded vector bundle associated to $\V$:
$$V=V_0\oplus...\oplus V_N=\mathcal G_0^{ss}\times_{G_0^{ss}}\V_{0}\oplus...\oplus\V_N,$$
where $V_0=E$. Moreover, we define $K:=\G_0\times_{G_0^{ss}}\K$ as the natural vector bundle corresponding to $\K$.
Since the isomorphisms $\phi_i$ are $G_0^{ss}$-equivariant, they induce vector bundle isomorphisms between the corresponding vector bundles
\begin{align}
&\phi_i: V_i \cong\mathcal U_{-i}(\gr(TM))^*\otimes E&\textrm{ for all } i<r\nonumber\\
&\phi_i: V_i \cong(\mathcal U_{-i}(\gr(TM))^*\otimes E)\cap (\mathcal U_{-(i-r)}(\gr(TM))^*\otimes K)&\textrm{ for all } i\geq r.\nonumber
\end{align}

\begin{rem}
Since for $i\geq r$ we have $\mathcal U_{-i}(\gr(TM))^*\otimes E\cap \mathcal U_{-(i-r)}(\gr(TM))^*\otimes K\cong V_i$, we see that a linear differential operator $D:\Gamma(E)\rightarrow\Gamma(F)$ of weighted order $r$ with weighted symbol given by (\ref{naturalprojection}) is of weighted finite type in the sense of \cite{Mor5} and \cite{N}.
\end{rem}

\subsection*{2. Step - The construction of the connection $\tilde{\nabla}$ and the differential operator $L$}

Since the maps $\partial$ and $\delta^*$ are $G_0^{ss}$-equivariant and compatible with the grading on $\Lambda^n\g_-^*\otimes\V$, they give rise to grading preserving vector bundle maps 
$$\partial:\Lambda^n\gr(TM)^*\otimes V\rightarrow \Lambda^{n+1}\gr(TM)^*\otimes V$$
$$\delta^*:\Lambda^{n}\gr(TM)^*\otimes V\rightarrow\Lambda^{n-1}\gr(TM)^*\otimes V,$$
where the grading on the vector bundle $\Lambda^{n}\gr(TM)^*\otimes V$ is induced by the grading on $\Lambda^n\g_-^*\otimes\V$.
\\Let us now choose a principal connection on $\mathcal G_0^{ss}\rightarrow M$. Then we get induced linear connection on all associated vector
bundles and we will denote all of them by $\nabla$. In particular, we obtain a linear connection $\nabla: \Gamma(V)\rightarrow \Gamma(T^*M\otimes V)$ on $V$. The filtrations of $V$ and $TM$ induce a filtration of $T^*M\otimes V$, where the $\ell$-th filtration component $(T^*M\otimes V)^\ell$ consists of all elements in $T^*M\otimes V$ of homogeneity $\geq \ell$, i.e. $\phi\in (T^*M\otimes V)^\ell\textrm{ if and only if } \phi(T^iM)\subset V^{i+\ell} \textrm{ for } i<0.$ 
Since $\nabla$ is induced from a principal $G_0^{ss}$-connection, it has to preserve the grading on $V$. Hence it raises homogeneity by one: $$\nabla: \Gamma(V^i)\rightarrow\Gamma((T^*M\otimes V)^{i+1}).$$
Choosing a splitting of the filtration of the tangent bundle $TM\cong \gr(TM)$, we can view $\partial$ and $\delta^*$ as grading respectively filtration preserving vector bundle maps on $\Lambda^nT^*M\otimes V$.
In particular, the following definition makes sense:
$$\widetilde{\nabla}:=\nabla+\partial:\Gamma(V)\rightarrow \Gamma(T^*M\otimes V).$$
$\tilde\nabla$ is a linear connection on $V=\ker(\partial)\oplus\im(\delta^*)=V_0\oplus\im(\delta^*)$, which is of homogeneity $\geq 0$ and whose lowest homogeneous component is given by the algebraic operator $\partial$. Now consider the following linear differential operator $$\delta^*\circ\tilde\nabla: \Gamma(V)\rightarrow \Gamma(\textrm{im}(\delta^*))\subseteq \Gamma(V).$$ It is of homogeneity $\geq 0$ with lowest homogeneous component given by $\delta^*\circ\partial$. If we restrict this operator to $\textrm{im}(\delta^*)$, the lowest component $\delta^*\circ\partial$ is the identity on $\textrm{im}(\delta^*)$ and $-(\delta^*\tilde\nabla-id)$ is (at most) $N$-step nilpotent. Therefore $\delta^*\tilde\nabla$ is invertible on $\Gamma(\textrm{im}(\delta^*))$ with inverse given by the von Neumann series $$(\delta^*\tilde\nabla)^{-1}=(id-(-(\delta^*\tilde\nabla-id)))^{-1}=\sum_{i=0}^N(-1)^i(\delta^*\tilde\nabla-id)^i.$$
\\Now we define a linear differential operator $L:\Gamma(V_0)\rightarrow\Gamma(V)$ by $$L(s)=\Sigma-(\delta^*\tilde\nabla)^{-1}\delta^*\tilde\nabla\Sigma,$$ where $\Sigma$ is a section of $V$ with $p_0(\Sigma)=s$ and $p_0:V\rightarrow V_0$ the projection. This is well defined, since $\Sigma$ is determined up to adding sections of $\textrm\im(\delta^*)$ and $L$ is zero on $\im(\delta^*)$. The operator $L$ obviously splits the projection $p_0$, i.e. $p_0(L(s))=s$. In addition, since $\delta^*\tilde\nabla(\delta^*\tilde\nabla)^{-1}$ is the identity on $\Gamma(\im(\delta^*))$, we see that $\delta^*\tilde\nabla L=0$. The operator $L$ is uniquely characterised by these two properties, since for a section $\Sigma\in\Gamma(V)$ with $p_0(\Sigma)=s$ and
$\delta^*\tilde\nabla\Sigma=0$, we obtain $L(s)=\Sigma-(\delta^*\tilde\nabla)^{-1}\delta^*\tilde\nabla\Sigma=\Sigma$. In particular, this shows that a section $\Sigma$ of $V$ lies in the image of $L$ if and only if $\delta^*\tilde\nabla\Sigma=0$.
\\Inserting the formula for $\delta^*\tilde\nabla$ and using that $\delta^*\partial$ is the identity on $\im(\delta^*)$, we obtain
$$L(s)=\sum_{i=0}^N(-1)^i(\delta^*\nabla)^i(\Sigma)-\sum_{i=0}^N(-1)^i(\delta^*\nabla)^i\delta^*\partial(\Sigma).$$
\\Since the formula of $L$ is independent of the choice of $\Sigma$, this implies that 
\begin{equation}\label{Lformula}
L(s)=\Sigma^N_{i=0}(-1)^i(\delta^*\nabla)^{i}(s), 
\end{equation}
where $s$ is viewed as a section of $V$ by trivial extension.
\\Denoting by $L^j$ the component in $V_0\oplus...\oplus V_j$ of $L^j$, we have:

\begin{prop}\label{Splittingop}
There exists a unique linear differential operator $L:\Gamma(V_0)\rightarrow \Gamma(V)$ such that
\begin{itemize}
\item{$p_0(L(s))=s$}
\item{$L$ has values in the kernel of $\delta^*\tilde\nabla$}
\end{itemize}
In particular, a section $\Sigma\in\Gamma(V)$ is in $\im(L)$ if and only if $\delta^*\tilde\nabla(\Sigma)=0$.
Moreover, each operator $L^j:\Gamma(V_0)\rightarrow \Gamma(V_0\oplus...\oplus V_j)$ induces a surjective vector bundle map 
$$\J^j(V_0)\rightarrow V_0\oplus...\oplus V_j,$$ which is an isomorphism for $j<r$.
\end{prop}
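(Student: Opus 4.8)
The first three assertions were in fact already established in the discussion preceding the proposition: existence is witnessed by the explicit formula (\ref{Lformula}), the identities $p_0\circ L=\mathrm{id}$ and $\delta^*\tilde\nabla\circ L=0$ were verified there, and uniqueness together with the description $\im(L)=\ker(\delta^*\tilde\nabla)$ followed from the invertibility of $\delta^*\tilde\nabla$ on $\Gamma(\im(\delta^*))$. Hence the only new content is the last sentence. My plan is as follows. First I would check that $L^j$ has weighted order $\leq j$, reading this off (\ref{Lformula}): the operator $\delta^*\nabla$ raises homogeneity by at least one, and more precisely its $V_{m+s}$-component on $\Gamma(V_m)$ is the algebraic map $\delta^*$ applied to the $\ggr_{-s}(TM)^*$-part of $\nabla$, hence a linear differential operator of weighted order $s$; therefore in $L(s)=\sum_{i=0}^N(-1)^i(\delta^*\nabla)^i(s)$ the $V_j$-component is a sum, over strictly increasing chains $0=\ell_0<...<\ell_i=j$, of operators of weighted order $\sum_t(\ell_t-\ell_{t-1})=j$; in particular it has weighted order $\leq j$, and only the terms with $i\leq j$ contribute. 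Since $L$ is linear (being built from linear operators via the von Neumann series), $L^j$ is a linear differential operator of weighted order $\leq j$, so $L^j=\Phi^j\circ j^j$ for a vector bundle map $\Phi^j\colon\J^j(V_0)\to V_0\oplus...\oplus V_j$.

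Next I would set up an induction on $j$. From $p_0\circ L=\mathrm{id}$ and $\mathrm{pr}_{\leq j-1}\circ L^j=L^{j-1}$ one gets $p_0\circ\Phi^j=\pi^j_0$ and $\mathrm{pr}_{\leq j-1}\circ\Phi^j=\Phi^{j-1}\circ\pi^j_{j-1}$, so Theorem \ref{Jetvec} provides a commutative diagram with exact rows
\[
\xymatrix{
0\ar[r] & \U_{-j}(\ggr(TM))^*\otimes V_0 \ar[r]^{\quad\quad\iota}\ar[d]_{\sigma_j(L^j)} & \J^j(V_0)\ar[r]^{\pi^j_{j-1}}\ar[d]_{\Phi^j} & \J^{j-1}(V_0)\ar[r]\ar[d]_{\Phi^{j-1}} & 0\\
0\ar[r] & V_j \ar[r] & V_0\oplus...\oplus V_j \ar[r]^{\mathrm{pr}} & V_0\oplus...\oplus V_{j-1}\ar[r] & 0
}
\]
whose left vertical arrow is precisely the weighted symbol $\sigma_j(L^j)$ (the restriction of $\Phi^j$ to $\im(\iota)$, which lands in $V_j$ since $\mathrm{pr}\circ\Phi^j\circ\iota=0$). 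As $\Phi^0=\mathrm{id}_{V_0}$, the five lemma then reduces everything to showing that $\sigma_j(L^j)\colon\U_{-j}(\ggr(TM))^*\otimes V_0\to V_j$ is surjective for every $j$; for $j<r$ surjectivity automatically forces bijectivity, since then $\U_{-j}(\g_-)^*\otimes\E\cong\V_j$ by Proposition \ref{U_iV_0}, so the source and target of $\Phi^j$ have equal rank.

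The main point — and the part I expect to be hard — is to identify $\sigma_j(L^j)$. I would compute it from (\ref{Lformula}) and show it agrees, up to a $G_0^{ss}$-equivariant automorphism of $V_j$ (that is, up to nonzero normalising constants on isotypic components), with a $G_0^{ss}$-equivariant left inverse of the inclusion $\phi_j\colon\V_j\hookrightarrow\U_{-j}(\g_-)^*\otimes\E$ from Proposition \ref{U_iV_0}. The mechanism is that $\tilde\nabla=\nabla+\partial$ with $\partial$ algebraic, while the principal $G_0^{ss}$-connection $\nabla$ has, in the $\ggr_{-s}(TM)^*$-direction, weighted symbol equal to the canonical contraction dual to $\g_{-s}\hookrightarrow\U_{-s}(\g_-)$; hence on the relevant homogeneity components the weighted symbol of the operator $\delta^*\tilde\nabla$ reduces to the Kostant codifferential $\delta^*$, and the von Neumann series $(\delta^*\tilde\nabla)^{-1}=\sum_i(-1)^i(\delta^*\tilde\nabla-\mathrm{id})^i$, read at the level of weighted symbols, reproduces term by term the recursion (\ref{induktiv}) by which $\phi_j$ was built out of $\partial_j$ and the lower $\phi_{j'}$ in the proof of Proposition \ref{U_iV_0}. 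Since $\delta^*$ is a $G_0^{ss}$-equivariant left inverse of $\partial$ on $\im(\partial)$, the composite $\sigma_j(L^j)\circ\phi_j$ is a $G_0^{ss}$-automorphism of $\V_j$, so $\sigma_j(L^j)$ is surjective; for $j<r$, where $\phi_j$ is an isomorphism, $\sigma_j(L^j)$ is then bijective, and for $j\geq r$ one retains surjectivity onto $\V_j\cong(\U_{-j}(\g_-)^*\otimes\E)\cap(\U_{-(j-r)}(\g_-)^*\otimes\K)$. Feeding this back into the previous step finishes the proof.

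The obstacle I expect to be genuinely delicate is the bookkeeping inside this last step: one has to reconcile the normalising factors entering the definition of the weighted symbol (the functionals $\phi_\alpha$ from the proof of Proposition \ref{jetexact}) with the Poincar\'e--Birkhoff--Witt symmetrisation (\ref{symmetrisation3}) used to identify $\U_{-i}(\g_-)$ with $\mathcal S_{-i}(\g_-)$, and to keep track of the signs produced by the anti-automorphism $u\mapsto u^{\top}$ and by the alternating sum defining $L$. Once $\sigma_j(L^j)$ has been correctly identified with a normalisation of a left inverse of $\phi_j$, what remains is the purely formal five-lemma induction described above.
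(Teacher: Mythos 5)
Your overall strategy coincides with the paper's: reduce, via the jet exact sequence of Theorem \ref{Jetvec} and the commutative diagram (the paper's (\ref{DiaSplit})), to showing that the weighted symbol $\sigma(L^j)$ is surjective, and obtain this by composing with the inclusion $\phi_j\colon \V_j\hookrightarrow \U_{-j}(\g_-)^*\otimes\V_0$ of Proposition \ref{U_iV_0}. Your order count for $L^j$ and the identification of the symbol of the iterated covariant derivative with the canonical inclusion $\U_{-j}(\ggr(TM))^*\otimes V_0\hookrightarrow\mathcal T_{-j}(\ggr(TM))^*\otimes V_0$ (using that curvature and torsion corrections have lower weighted order) are exactly the paper's intermediate operator $D^j$.

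The gap is in the one step you yourself flag as delicate: you never actually prove that $\sigma(L^j)\circ\phi_j$ is invertible, and the argument you offer in its place -- that it is a $G_0^{ss}$-equivariant endomorphism of $\V_j$, hence an automorphism ``up to nonzero normalising constants on isotypic components'' -- does not work on its own. A $G_0^{ss}$-equivariant endomorphism of $\V_j$ can perfectly well vanish on an irreducible summand ($\V_j$ is in general far from irreducible and may have multiplicities), so equivariance gives you nothing until you have verified that no constant is zero, which is precisely the computation being deferred. The paper closes exactly this hole with an explicit telescoping identity: writing $\phi_j=\sum_{i=1}^j(-1)^{i-1}p_0^j\circ\partial^{(i)}$ and $\sigma(L^j)$ as the alternating sum of the iterated $\delta^*_{(i)}$'s, one uses $\delta^*_{(i)}\circ\partial^{(i)}=(\mathrm{id}-p_0^j)\circ\partial^{(i-1)}$ to collapse the nested composition from the inside out, obtaining $\sigma(L^j)\circ\phi_j=-\mathrm{id}$ on $V_j$ exactly (no undetermined constants), whence surjectivity of $\sigma(L^j)$ for all $j$ and, since $\phi_j$ is an isomorphism for $j<r$, $\sigma(L^j)=-\phi_j^{-1}$ there. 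Your ``the von Neumann series reproduces the recursion (\ref{induktiv})'' is the right intuition for why this identity holds, but as written your proposal asserts the conclusion rather than deriving it; to complete the argument you need to carry out that telescoping computation (or an equivalent one).
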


\begin{proof}
It only remains to show the last assertion. 
The principal connection on $\G_0^{ss}$ induces not only a linear connection $\nabla$ on $V$, but also a linear connection $\nabla$ on $\gr(TM)\cong TM$ satisfying $\nabla:\Gamma(\gr_i(TM))\rightarrow\Gamma(\gr(TM)^*\otimes\gr_i(TM))$. The $G_0^{ss}$-equivariance of $\delta^*: \g_-^*\otimes\V\rightarrow\V$ implies that the corresponding vector bundle map is parallel for the induced linear connection on $\gr(TM)\otimes V^*\otimes V$.
Therefore we conclude that $L(s)=\Sigma^N_{i=0}(-1)^i(\delta^*\nabla)^{i}s$ can be written as $$L(s)=\sum_{i=0}^N(-1)^i (\delta^*\circ(id\otimes\delta^*)\circ...\circ(\underbrace{id\otimes...\otimes id}_{i-1}\otimes \delta^*))\circ \nabla^i s$$ with the convention that the $0$-th term is the identity.
\\Denote by  $\mathcal T_{-i}(\gr(TM))=\G_0^{ss}\times_{G_0^{ss}}\mathcal T_{-i}(\g_-)$ the associated vector bundle corresponding to the $-i$-th grading component of the tensor algebra
$\T(\g_-)$ and consider the following differential operator
$$D^j: \Gamma(V_0)\rightarrow \Gamma(\bigoplus_{i=0}^j\mathcal T_{-i}(\gr(TM))^*\otimes V_0)$$
$$s\mapsto(\sum_{i=0}^j \nabla^is)_{\leq j}$$
where $(\quad)_{\leq j}$ means that we restrict $\sum_{i=0}^j \nabla^is$ to all grading components of degree $\leq j$ in $\bigoplus_{i=0}^j(\gr(TM)^i)^*\otimes V_0$.
This operator is obviously of weighted order $j$. Note that we have
$$\nabla\nabla s(\xi,\eta)-\nabla\nabla s(\eta,\xi)=R(\xi,\eta)(s)+\nabla_{\nabla_{\eta}\xi}s-\nabla_{\nabla_{\xi}\eta}s-\nabla_{[\eta,\xi]s}$$
and so
$$\nabla\nabla s(\xi,\eta)-\nabla\nabla s(\eta,\xi)-\nabla_{\mathcal L(\xi,\eta)}s\equiv 0 \textrm{ mod}( \textrm{ terms of lower weighted order in } s).$$
Therefore we conclude that the weighted symbol of $D^j$ is given by the canonical inclusion
$$\sigma(D^j): \U_{-j}(\gr(TM))^*\otimes V_0\hookrightarrow \mathcal T_{-j}(\gr(TM))^*\otimes V_0\subset\bigoplus_{i=0}^j\mathcal T_{-i}(\gr(TM))^*\otimes V_0,$$ which is obtained by dualising the projection $\mathcal T_{-j}(\gr(TM))\rightarrow\U_{-j}(\gr(TM)).$
Since $\delta^*$ is grading preserving, we deduce that
$$L^j(s)=\sum_{i=0}^j (-1)^i ((\delta^*\circ(id\otimes\delta^*)\circ...\circ(id\otimes...\otimes id\otimes \delta^*))\circ \nabla^i s)_{\leq j}=$$
$$=(\sum_{i=0}^j (-1)^i \delta^*\circ(id\otimes\delta^*)\circ...\circ(id\otimes...\otimes id\otimes \delta^*))\circ D^j(s)$$ is of weighted order $j$ and
hence induces a vector bundle map
$$L^j: \mathcal J^j(V_0)\rightarrow V_0\oplus...\oplus V_j.$$
Since $L^0$ is just the identity on $V_0$, the assertion holds for $j=0$.
\\Suppose now that $j\geq 1$ and let us compute the weighted symbol $\sigma(L^j)$ of $L^j$. It is given by the composition of the weighted symbol of $D^j$ with $\psi_j:=\sum_{i=1}^j(-1)^i((\delta^*\circ(id\otimes\delta^*)\circ...\circ(\underbrace{id\otimes...\otimes id}_{i-1}\otimes \delta^*))$
\begin{equation*}
\xymatrix{\U_{-j}(\gr(TM))^*\otimes V_0 \ar @{^{(}->}[r]^{\quad\sigma(D^j)}\ar[dr]_{\sigma(L^j)} & {\T_{-j}(\gr(TM))^*\otimes V_0\ar[d]^{\psi_j}}\\  & V_j\\}
\end{equation*}
Now consider the injective vector bundle map corresponding to the $G_0^{ss}$-equivariant inclusion of Proposition \ref{U_iV_0} $$\phi_j: V_j\rightarrow\U_{-j}(\gr(TM))^*\otimes V_0\subset \T_{-j}(\gr(TM))^*\otimes V_0.$$ This vector bundle map can also be written as
$$\phi_j=\sum_{i=1}^j(-1)^{i-1}p_0^j\circ (\underbrace{id\otimes...\otimes id}_{i-1}\otimes\partial)\circ...\circ(id\otimes\partial)\circ\partial, $$
where $p_0^j: \bigoplus_{i=1}^j\T_{-i}(\gr(TM))^*\otimes V_{j-i}\rightarrow\T_{-j}(\gr(TM))^*\otimes V_{0}$ is the projection given by restriction.
\\Setting $\partial^{(i)}:=\underbrace{id\otimes...\otimes id}_{i-1}\otimes\partial\circ...\circ(id\otimes\partial)\circ\partial|_{V_j}$ and $\delta^*_{(i)}:=\underbrace{id\otimes...\otimes id}_{i-1}\otimes\delta^*$,
we obtain that
\begin{align}\label{Last}
&(\sum_{i=1}^j(-1)^i(\delta^*\circ...\circ(\underbrace{id\otimes...\otimes id}_{i-1}\otimes \delta^*))\circ(\sum_{i=1}^j(-1)^{i-1} p^j_0\circ(\underbrace{id\otimes...\otimes id}_{i-1}\otimes\partial)\circ...\circ\partial)\nonumber\\
&=-[...\delta^*_{(j-2)}(p_0^j\circ\partial^{(j-2)}+\delta_{(j-1)}^*(p_0^j\circ\partial^{(j-1)}+\delta_{(j)}^*\circ p_0^j\circ\partial^{(j)}))].
\end{align}
Recall that $\delta^*:\gr(TM)^*\otimes V\rightarrow V$ is defined as the inverse of $\partial$ on $\im(\partial)\subset \gr(TM)^*\otimes V$ and zero on the rest.
Since $p_0^j\circ\partial^{(j)}=\partial^{(j)}$, we therefore get that $$\delta_{(j)}^*\circ p_0^j\circ\partial^{(j)}=\delta_{(j)}^*\circ\partial^{(j)}=(id-p^j_0)\circ\partial^{(j-1)}.$$ Hence $p_0^j\circ\partial^{(j-1)}+\delta_{(j)}^*\circ p_0^j\circ\partial^{(j)}=\partial^{(j-1)}$. Since $\delta_{(j-1)}^*\circ\partial^{(j-1)}$ equals again $(id-p^j_0)\circ\partial^{(j-2)}$, we conclude inductively that the composition $\sigma(L^j)\circ\phi_j$, which coincides with (\ref{Last}), equals $-id$ on $V_j$. In particular, $\sigma(L^j)$ is surjective and so is $L^j$.
Since we know by Proposition \ref{U_iV_0}  that $\phi_j: V_j\rightarrow\U_{-j}(\gr(TM))^*\otimes V_0$ is an isomorphism for $j<r$, we conclude that for $j<r$ the weighted symbol $\sigma(L^j)$ is an isomorphism, which equals $-\phi_{j}^{-1}$.
Therefore it follows by induction from the commutative diagram

\begin{equation}\label{DiaSplit}
\xymatrix{0\ar[d]&0\ar[d]\\ \mathcal U_{-j}(\gr(TM))^*\otimes V_0\ar[d]^{\iota}\ar[r]^{\quad\quad\quad\sigma(L^j)}& V_j \ar[d]\ar[r] & 0\\ \J^j(V_0)\ar[r]^{L^j}\ar[d]^{\pi^j_{j-1}}&V_0\oplus...\oplus V_j\ar[d]\ar[r]& 0\\\J^{j-1}(V_0)\ar[r]^{L^{j-1}}\ar[d]&V_0\oplus...\oplus V_{j-1}\ar[d]\ar[r] & 0\\0&0\\}
\end{equation}
that $L^j$ induces an isomorphism $\J^j(V_0)\rightarrow V_0\oplus...\oplus V_j$ for $j<r$
\end{proof}

\subsection*{3. Step - The construction of the bundle map $C$}

Now we define the following linear differential operator $$D^{\nabla}:=-(id\otimes\phi_{r-1})\circ\pi\circ\tilde\nabla\circ L:\Gamma(E)\rightarrow \Gamma(\circledcirc ^r \gr_{-1}(TM)^*\circledcirc E),$$
where $\pi$ denotes the projection $$\pi: \gr(TM)^*\otimes V\rightarrow\gr_{-1}(TM)^*\otimes V_{r-1}\rightarrow \ker(\square).$$
Since the projection $\pi$ annihilates $\im(\partial)$, we obtain that $$D^{\nabla}(s)=-(id\otimes\phi_{r-1})\pi\nabla(Ls)_{r-1},$$
where $(Ls)_{r-1}$ denotes the component in $V_{r-1}$ of $L(s)$.
From Proposition \ref{Splittingop} we know that $s\mapsto L(s)_{r-1}$ is a differential operator of weighted order $r-1$ with weighted symbol given by $-\phi_{r-1}^{-1}$ and so we see that $D^{\nabla}$ is of weighted order $r$ with weighted symbol given by
$$\sigma(D^{\nabla})=-id\otimes\phi_{r-1}\circ\pi\circ(-id\otimes\phi_{r-1}^{-1})$$
$$(\gr_{-1}(TM)^*\otimes\U_{r-1}(\gr(TM))^*\otimes E)\cap (\U_{-r}(\gr(TM))^*\otimes E)\rightarrow  \circledcirc ^r \gr_{-1}(TM)^*\circledcirc E.$$
\\Using (\ref{Isocohom}) we conclude that $\sigma(D^{\nabla})$ equals the projection (\ref{naturalprojection}).
Similarly as it was done for overdetermined systems on regular infinitesimal flag structures corresponding to $|1|$-graded semisimple Lie algebras 
in \cite{BCEG}, we can now start to rewrite the equation $D(s)=0$ for a differential operator $D$ as in Theorem \ref{main1}.

\begin{prop}\label{propA}
For any semi-linear differential operator $$D: \Gamma(E)\rightarrow\Gamma(\circledcirc^r \ggr_{-1}(TM)^*\circledcirc E)=\Gamma(F)$$
of weighted order $r$ with weighted symbol given by the projection (\ref{naturalprojection}), there exists a bundle map $A: V_0\oplus...\oplus V_{N}\rightarrow F$ such that $L$ and the projection $p_0: V\rightarrow V_0=E$ induce inverse bijections between the following sets
$$\{s\in\Gamma(E): Ds=0\}\leftrightarrow\{\Sigma\in\Gamma(V): \widetilde{\nabla}(\Sigma)+A(\Sigma)\in \Gamma(\textrm{\emph{im}}(\delta^*))\}.$$
\end{prop}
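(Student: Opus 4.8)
The plan is to produce the bundle map $A$ by modifying the naive zeroth-order operator coming from the lower-order part $D_2$ of $D$ and the symbol normalisation, and then to verify the two directions of the claimed bijection using the characterisation of $\im(L)$ from Proposition~\ref{Splittingop} (namely $\Sigma\in\im(L)$ iff $\delta^*\tilde\nabla\Sigma=0$). First I would observe that for $\Sigma\in\im(L)$, say $\Sigma=L(s)$ with $s=p_0(\Sigma)$, the section $\tilde\nabla\Sigma\in\Gamma(\gr(TM)^*\otimes V)$ decomposes according to the Hodge decomposition $\gr(TM)^*\otimes V=\im(\partial)\oplus\ker(\square)\oplus\im(\delta^*)$; the $\im(\delta^*)$-component is exactly $\delta^*\tilde\nabla\Sigma$, which vanishes by Proposition~\ref{Splittingop}, so $\tilde\nabla\Sigma\in\Gamma(\im(\partial)\oplus\ker(\square))$. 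Applying the projection $\pi$ onto $\ker(\square)$ (which kills $\im(\partial)$) and composing with $-(id\otimes\phi_{r-1})$ gives precisely $D^\nabla(s)$, which by the symbol computation preceding the proposition has the same weighted symbol as $D$, namely the projection~(\ref{naturalprojection}). Hence $D-D^\nabla$ is of weighted order $\leq r-1$.

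Next I would use Proposition~\ref{Splittingop} again: for $j<r$ the operator $L^j$ identifies $\J^j(V_0)$ with $V_0\oplus\dots\oplus V_j$, so any differential operator $\Gamma(E)\to\Gamma(F)$ of weighted order $\leq r-1$ can be written as $B\circ L^{r-1}$ for a (unique) bundle map $B: V_0\oplus\dots\oplus V_{r-1}\to F$; in particular $D-D^\nabla = B\circ L^{r-1}$ for such a $B$. Extending $B$ by zero on $V_r\oplus\dots\oplus V_N$ I get a bundle map, still called $B$, on all of $V$, and for $\Sigma\in\im(L)$ I have $B(\Sigma)=(D-D^\nabla)(p_0\Sigma)$. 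Now set $A:=-(id\otimes\phi_{r-1})\circ\pi\circ(\text{something})$... more precisely, I want a bundle map $A: V\to F$ so that for $\Sigma\in\im(L)$ the quantity $\tilde\nabla\Sigma+A(\Sigma)$ lies in $\Gamma(\im(\delta^*))$ exactly when $D(p_0\Sigma)=0$. The candidate is obtained by letting $A$ act as follows: compose $\tilde\nabla\Sigma$'s $\ker(\square)$-component (extracted by $-(id\otimes\phi_{r-1})\circ\pi$, landing in $F$) — but $A$ must be a bundle map $V\to T^*M\otimes V$-valued... here one uses that $F\cong\ker(\square)\subset\gr(TM)^*\otimes V_{r-1}\subset T^*M\otimes V$ via the isomorphism $\phi_{r-1}$ and the splitting $TM\cong\gr(TM)$, so that adding an $F$-valued correction is legitimate as adding an element of $\Gamma(T^*M\otimes V)$ whose Hodge type is $\ker(\square)$. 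Concretely, define $A$ on $V$ by: take $\Sigma$, form $B(\Sigma)\in\Gamma(F)$ (using the extended bundle map $B$), reinterpret $B(\Sigma)$ via $F\cong\ker(\square)\hookrightarrow T^*M\otimes V$, and set $A(\Sigma)$ to be (minus) this element plus possibly a correction supported in $\im(\partial)$ to make $\tilde\nabla+A$ land correctly; the precise sign and the fact that the $\im(\partial)$-part of $\tilde\nabla\Sigma$ is irrelevant modulo $\im(\delta^*)$ only after projecting makes the bookkeeping delicate.

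The cleanest way to organise the verification, which I would follow, is: (i) show that for $\Sigma\in\im(L)$, $\tilde\nabla\Sigma+A(\Sigma)\in\Gamma(\im(\delta^*))$ $\iff$ $\pi(\tilde\nabla\Sigma+A(\Sigma))=0$ and the $\im(\partial)$-component vanishes — but since $A$ is chosen to only affect the $\ker(\square)$-slot and $\delta^*\tilde\nabla\Sigma=0$ already forces the $\im(\delta^*)$-slot to vanish, the $\im(\partial)$-component of $\tilde\nabla\Sigma$ need not vanish; hence actually the correct normalisation is to define $A$ so that $\tilde\nabla\Sigma+A(\Sigma)$ has vanishing $\ker(\square)$-component, i.e.\ $A(\Sigma)=-\pi_{\ker\square}(\tilde\nabla\Sigma) + (\text{lower order, bundle-map, from }D_2)$, and then $\tilde\nabla\Sigma+A(\Sigma)\in\Gamma(\im(\partial)\oplus\im(\delta^*))$; (ii) observe $-(id\otimes\phi_{r-1})\circ\pi(\tilde\nabla\Sigma+A(\Sigma)) = D^\nabla(s) + (D-D^\nabla)(s) = D(s)$; hence $\tilde\nabla\Sigma+A(\Sigma)$ is a section of $\im(\delta^*)$ (equivalently its $\im(\partial)$-part also vanishes — one checks the $\im(\partial)$-part is already zero for $\Sigma\in\im(L)$ because $\tilde\nabla=\nabla+\partial$ with $\nabla$ of homogeneity $\geq 0$ and $\partial\Sigma$'s lowest piece, on $\im(\delta^*)=V\ominus V_0$, pairs with the defining property of $L$) iff $D(s)=0$. (iii) For the reverse direction, given $\Sigma$ with $\tilde\nabla\Sigma+A(\Sigma)\in\Gamma(\im(\delta^*))$, apply $\delta^*$: since $\delta^*$ kills $\im(\delta^*)$ one must first check $\delta^*(\tilde\nabla\Sigma+A(\Sigma))=0$ forces $\delta^*\tilde\nabla\Sigma=0$, whence $\Sigma\in\im(L)$ by Proposition~\ref{Splittingop}, and then part (ii) gives $D(p_0\Sigma)=0$; finally $p_0$ and $L$ are mutually inverse on these sets because $p_0\circ L=\id$ and, on $\im(L)$, $L\circ p_0=\id$ by the uniqueness clause in Proposition~\ref{Splittingop}.

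The main obstacle I anticipate is step~(i)–(ii) bookkeeping: carefully setting up $A$ as a genuine bundle map $V\to T^*M\otimes V$ (using the splitting $TM\cong\gr(TM)$ and the embedding $F\cong\ker(\square)\hookrightarrow T^*M\otimes V$), and checking that the $\im(\partial)$-component of $\tilde\nabla\Sigma$ automatically vanishes for $\Sigma\in\im(L)$ — or, if it does not vanish identically, that it is absorbed into $\Gamma(\im(\delta^*))$ correctly — so that "$\tilde\nabla\Sigma+A(\Sigma)\in\Gamma(\im(\delta^*))$" really is equivalent to "$D(p_0\Sigma)=0$" with no spurious extra conditions. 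This is the place where one genuinely uses that $\delta^*\tilde\nabla L=0$ and that $\partial$ restricted to the graded pieces is injective in low degrees (Proposition~\ref{U_iV_0}); once that is pinned down, the rest is a formal consequence of Propositions~\ref{U_iV_0} and~\ref{Splittingop} and the symbol computation for $D^\nabla$.
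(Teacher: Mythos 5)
Your overall strategy is the paper's: compare $D$ with $D^\nabla$, use the isomorphism $L^{r-1}\colon \J^{r-1}(E)\cong V_0\oplus\dots\oplus V_{r-1}$ from Proposition \ref{Splittingop} to turn the weighted-order-$(r-1)$ discrepancy into a bundle map on $V$ (extended by zero), read off the equation from the Hodge decomposition of $T^*M\otimes V$, and obtain the converse direction by applying $\delta^*$ and invoking the characterisation of $\im(L)$. However, the execution contains a genuine error: you systematically swap the roles of $\im(\partial)$ and $\im(\delta^*)$. Since $\delta^*$ vanishes on $\ker(\partial^*)=\im(\delta^*)\oplus\ker(\square)$ and is injective on $\im(\partial)$, the identity $\delta^*\tilde\nabla L=0$ says that $\tilde\nabla Ls$ has no $\im(\partial)$-component, i.e.\ $\tilde\nabla Ls\in\Gamma(\im(\delta^*)\oplus\ker(\square))$ --- not, as you write, that the ``$\im(\delta^*)$-slot'' vanishes and $\tilde\nabla Ls\in\Gamma(\im(\partial)\oplus\ker(\square))$. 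This is not cosmetic: it is exactly why the right-hand condition is ``$\in\Gamma(\im(\delta^*))$'' rather than ``$=0$'', and it makes your extended worrying about the $\im(\partial)$-part unnecessary, since that part vanishes automatically for $\Sigma\in\im(L)$ and $A(\Sigma)\in\ker(\square)$ contributes nothing to it.

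The second problem is that $A$ is never actually pinned down, and one of your two candidates is wrong. Setting $A(\Sigma)=-\pi_{\ker(\square)}(\tilde\nabla\Sigma)+(\text{lower order})$ would make $A$ a first-order differential operator rather than a bundle map, and would force the $\ker(\square)$-component of $\tilde\nabla\Sigma+A(\Sigma)$ to vanish identically, so the right-hand set would no longer detect $D(s)=0$; this also contradicts your own step (ii). The correct definition, which your step (ii) implicitly uses, is the paper's: write $D(s)=D^\nabla(s)+\psi(j^{r-1}s)$ for a bundle map $\psi\colon\J^{r-1}(E)\rightarrow F$, set $A:=\psi\circ(L^{r-1})^{-1}$ on $V_0\oplus\dots\oplus V_{r-1}$ extended by zero, and regard $F\cong\ker(\square)\subset T^*M\otimes V$ via $-(id\otimes\phi_{r-1})$, so that $A$ takes values in $\ker(\square)\subseteq\ker(\delta^*)$. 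With that choice the $\ker(\square)$-component of $\tilde\nabla Ls+A(Ls)$ is exactly $D(s)$, the $\im(\partial)$-component is zero, and both directions of the bijection follow as you sketch in (iii). With these corrections your argument coincides with the paper's proof.
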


\begin{proof}
The operators $D$ and $D^{\nabla}$ have the same weighted symbol and therefore there exists a bundle map $\psi: \J^{r-1}(E)\rightarrow F$ such that $D(s)=D^\nabla(s)+\psi(j^{r-1}s).$ By Proposition \ref{Splittingop} the splitting operator $L$ induces an isomorphism $$L^{r-1}: \J^{r-1}(E)\cong V_0\oplus...\oplus V_{r-1}$$ and so there is a unique bundle map
$$A: V _0\oplus...\oplus V_{r-1}\rightarrow F \textrm{ \quad such that\quad } \psi(j^{r-1}s)=A(Ls)$$ and we view $A$ as a map on the whole bundle $V$ by trivial extension.
\\Since $\tilde\nabla Ls$ has values in $\ker(\delta^*)$ by Proposition \ref{Splittingop} and $A(Ls)$ even in $\ker(\square)\subseteq\ker(\delta^*)$, we obtain that $$0=D(s)=\pi(\tilde{\nabla}Ls+A(Ls))$$ if and only if
$$\tilde\nabla Ls+A(Ls)\in\Gamma(\textrm{im}(\delta^*)),$$ where $\pi:T^*M\otimes V\rightarrow\ker(\square)$ is the projection.
\\Conversely, suppose $\Sigma$ is a section of $V$ such that $\tilde\nabla\Sigma+A(\Sigma)\in\Gamma(\textrm{im}(\delta^*))$. Then $\delta^*(\tilde\nabla\Sigma+A(\Sigma))=0$ and since the map $A$ has values in $\ker(\delta^*)$, we get $\delta^*(\tilde\nabla\Sigma)=0$. By Proposition \ref{Splittingop} the equality $\delta^*(\tilde\nabla\Sigma)=0$ implies that $\Sigma=L(p_0(\Sigma))$ and hence $D(p_0(\Sigma))=0$.
\end{proof}
The fact that $A: V\rightarrow\ker(\square)\subset T^*M\otimes V$ is of homogeneity $\geq 1$, allows us to compute the section $\tilde\nabla\Sigma+A(\Sigma)\in\Gamma(\im(\delta^*))$.

\begin{prop}\label{propB1}
Let $A:V\rightarrow T^*M\otimes V$ be a bundle map of homogeneity $\geq 1$. Then there exists a differential operator $B:\Gamma(V)\rightarrow \Gamma(T^*M\otimes V)$ such that
$$\tilde\nabla\Sigma+A(\Sigma)\in\Gamma(\textrm{\emph{im}}(\delta^*))\quad\textrm{ if and only if }\quad\quad\tilde\nabla\Sigma+B(\Sigma)=0.$$
If $A$ is a vector bundle map, $B$ is a linear differential operator.
\end{prop}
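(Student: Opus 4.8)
The plan is to reformulate the affine condition $\tilde\nabla\Sigma+A(\Sigma)\in\Gamma(\im(\delta^*))$ as the vanishing of the complementary Hodge projection, and then to exploit the fact that the purely algebraic summand $\partial$ of $\tilde\nabla$ is invisible to that projection; subtracting the resulting projected derivative term from $A$ will produce $B$.

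Concretely, I would first transport the algebraic Hodge decomposition of Section~\ref{LieAlgCoh}, via the chosen splitting $TM\cong\gr(TM)$, to the bundle $T^*M\otimes V$, writing $T^*M\otimes V=\im(\partial)\oplus\ker(\square)\oplus\im(\delta^*)$; here $\ker(\partial)=\im(\partial)\oplus\ker(\square)$ is a $G_0^{ss}$-invariant, homogeneity-graded complement of $\im(\delta^*)$. Let $P\colon T^*M\otimes V\to\im(\delta^*)$ and $Q:=\mathrm{id}-P\colon T^*M\otimes V\to\ker(\partial)$ be the associated projections; since the decomposition is $G_0^{ss}$-equivariant and compatible with the grading, both $P$ and $Q$ are vector bundle maps preserving homogeneity. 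A section lies in $\Gamma(\im(\delta^*))$ if and only if it is annihilated by $Q$, so the condition of the proposition is equivalent to $Q\bigl(\tilde\nabla\Sigma+A(\Sigma)\bigr)=0$.

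Then I would compute $Q\circ\tilde\nabla$. Writing $\tilde\nabla=\nabla+\partial$ and using $\partial\circ\partial=0$ one gets $\partial(\Gamma(V))\subseteq\Gamma(\ker(\partial))$, so $Q\circ\partial=\partial$ (the projection $Q$ restricts to the identity on $\im(\partial)$); together with $Q\circ\nabla=\nabla-P\circ\nabla$ this yields $Q\bigl(\tilde\nabla\Sigma+A(\Sigma)\bigr)=\tilde\nabla\Sigma+\bigl(Q\circ A-P\circ\nabla\bigr)(\Sigma)$. Hence $B:=Q\circ A-P\circ\nabla$ does the job, and in the situation where $A$ already has values in $\ker(\square)$ this simplifies to $B=A-P\circ\nabla$. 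To identify the type of $B$: the term $Q\circ A$ is a composition of bundle maps, hence a bundle map of homogeneity $\geq 1$ (since $A$ raises homogeneity by at least one and $Q$ preserves it), and $P\circ\nabla$ is the composition of the linear connection $\nabla$ with the vector bundle map $P$, hence a first-order linear differential operator, again of homogeneity $\geq 1$ because $\nabla\colon\Gamma(V^i)\to\Gamma((T^*M\otimes V)^{i+1})$. Thus $B$ is a first-order differential operator of homogeneity $\geq 1$, and it is a linear differential operator as soon as $A$ is a vector bundle map.

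There is no serious obstacle here; the one point carrying the actual content is the observation $Q\circ\partial=\partial$, i.e.\ that the algebraic leading part $\partial$ of $\tilde\nabla$ is untouched by the projection onto the complement of $\im(\delta^*)$. This is exactly what keeps the original connection $\tilde\nabla$ in the rewritten equation and forces the correction $B$ to have strictly positive homogeneity — a feature that will be needed afterwards when $B$ is to be absorbed into a genuine bundle map.
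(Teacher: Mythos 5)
Your argument is formally valid for the bare statement: the Hodge decomposition does give a grading-preserving bundle projection $Q$ onto $\ker(\partial)$ along $\im(\delta^*)$, one has $Q\circ\partial=\partial$ because $\partial\circ\partial=0$, and hence $Q\bigl(\tilde\nabla\Sigma+A(\Sigma)\bigr)=\tilde\nabla\Sigma+\bigl(Q\circ A-P\circ\nabla\bigr)(\Sigma)$, so $B:=Q\circ A-P\circ\nabla$ satisfies the stated equivalence and is linear whenever $A$ is a vector bundle map. This is a genuinely different, and much shorter, route than the paper's: there the unknown $\im(\delta^*)$-component $\delta^*\psi$ of $\tilde\nabla\Sigma+A(\Sigma)$ is \emph{computed} rather than projected away, by repeatedly applying $\delta^*d^{\tilde\nabla}$ to the equation and using $d^{\tilde\nabla}\tilde\nabla\Sigma=\tilde R(\Sigma)$ (Lemma \ref{curvature}) to convert every derivative of $\tilde\nabla\Sigma$ into a curvature term of homogeneity $\geq 1$; this produces the recursion $B_i(\Sigma)=A_i(\Sigma)-\delta^*\bigl([\tilde R(\Sigma)+d^{\tilde\nabla}(B_1(\Sigma)+\dots+B_{i-1}(\Sigma))]_i+\partial(A_i(\Sigma))\bigr)$.

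The difference is not cosmetic, and this is where your proposal falls short of what the proposition is for. The operator the paper constructs has the property, recorded in Proposition \ref{propB2}, that its $i$-th component factors through $\mathcal J^{i-1}(V_0)\oplus\dots\oplus\mathcal J^{1}(V_{i-2})\oplus V_{i-1}$ — in particular it depends on the top relevant component $\Sigma_{i-1}$ only algebraically. Your $B$ violates this already in homogeneity $1$: $B_1(\Sigma)=(Q A(\Sigma))_1-P\bigl(\nabla\Sigma_0|_{\gr_{-1}(TM)}\bigr)$ contains an honest first derivative of $\Sigma_0$, and more generally $B_i$ contains $(P\nabla\Sigma)_i$, which involves $\nabla\Sigma_{i-1}$. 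The equation $\tilde\nabla\Sigma+B(\Sigma)=0$ then reduces to $(Q\nabla\Sigma)_i+(\partial\Sigma)_i+(QA(\Sigma))_i=0$, which constrains only the $\ker(\partial)$-part of $(\nabla\Sigma)_i$ and leaves $(P\nabla\Sigma)_i$ free; so the inductive elimination of Proposition \ref{propC} cannot get started, and the chain leading to a system $\tilde\nabla\Sigma+C(\Sigma)=0$ with $C$ a bundle map — i.e.\ Theorem \ref{main1} — does not go through with your $B$. The missing idea is precisely the content of the paper's proof: that the discarded component $P\bigl(\tilde\nabla\Sigma+A(\Sigma)\bigr)=\delta^*\psi$ is itself a differential consequence of the equation, recoverable order by order via $\delta^*d^{\tilde\nabla}$ because $\delta^*\partial$ is the identity on $\im(\delta^*)$ and the curvature $\tilde R$ is algebraic of homogeneity $\geq 1$.
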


\begin{proof}
Since we have chosen a splitting of the tangent bundle, we can identify $TM$ with $\gr(TM)$. Therefore we have a grading on differential forms with values in $V$ corresponding to the grading (\ref{gradLambda}) on $\Lambda^n\g_-^*\otimes \V$, which is given by homogeneous degree. We denote by lower indices the grading components
$$(\Lambda^nT^*M\otimes V)_\ell=(\Lambda^n\gr(TM)^*\otimes V)_\ell:=\bigoplus_{j=n}^{nk}(\Lambda^n_{-j}\gr(TM))^*\otimes V_{\ell-j}$$ and by upper indices the filtration components $(\Lambda^n T^*M\otimes V)^\ell$ of the associated filtration ($\phi\in(\Lambda^nT^*M\otimes V)^\ell\textrm{ if and only if }\phi(T^{i_1}M,...,T^{i_n}M)\subset V^{i_1+....+i_n+\ell}$).
Further, let us denote by $d^{\tilde\nabla}$ the covariant exterior derivative corresponding to the linear connection $\tilde\nabla$.
Recall that for a one form $\phi\in\Gamma(T^*M\otimes V)$ the covariant exterior derivative is given by
\begin{equation}\label{d}
d^{\tilde\nabla}\phi(\xi,\eta)=\tilde\nabla_\xi(\phi(\eta))-\tilde\nabla_\eta(\phi(\xi))-\phi([\xi,\eta]).
\end{equation}
Inserting $\phi=\tilde\nabla\Sigma$ into (\ref{d}) we see that $d^{\tilde\nabla}\tilde\nabla\Sigma(\xi,\eta)$ equals the curvature $\tilde R(\xi,\eta)(\Sigma)$ of $\tilde\nabla$. We will write $\tilde R(\Sigma)$ for the two form, which is given by $(\xi,\eta)\mapsto\tilde R(\xi,\eta)(\Sigma)$.
\\Now let us consider the equation $\tilde\nabla\Sigma+A(\Sigma)=\delta^*\psi$ for some $\psi\in\Lambda^2T^*M\otimes V$ and show how it can be rewritten. Concerning the bundle map $A$, we will write $A_i(\Sigma)$ for the $i$-th grading component and  $A^i(\Sigma)$ for the $i$-th filtration component of $A(\Sigma)$ in $T^*M\otimes V$.
Since $\delta^*$ is filtration preserving, we have $\delta^*\psi\in(T^*M\otimes V)^2\subset(T^*M\otimes V)^1=T^*M\otimes V$ and we set $B_1(\Sigma):=A_1(\Sigma)$.
Then the equation reads as
\begin{equation}\label{B}
\tilde\nabla\Sigma+B_1(\Sigma)+A^2(\Sigma)=\delta^*\psi.
\end{equation}
Since $\tilde\nabla$ is of homogeneity $\geq 0$ and its lowest homogeneous component is given by $\partial$, the same is true for $d^{\tilde\nabla}: T^*M\otimes V\rightarrow \Lambda^2T^*M\otimes V$. Hence the operator
$\delta^*d^{\tilde\nabla}: T^*M\otimes V\rightarrow T^*M\otimes V$ is also of homogeneity $\geq 0$ with lowest homogeneous component $\delta^*\partial$, which by definition of $\delta^*$ is the identity on $\im(\delta^*)\subset T^*M\otimes V$. Applying
$\delta^*d^{\tilde\nabla}$ to the equation (\ref{B}), we can therefore compute the lowest grading component $(\delta^*\psi)_2$. Moving the resulting expression for $(\delta^*\psi)_2$ to the other side of the equation and applying $\delta^*d^{\tilde\nabla}$ to the new equation, we can compute $(\delta^*\psi)_3$ and so on until we have computed the whole one form $\delta^*\psi$.
More explicitly, if we apply first $d^{\tilde\nabla}$ to the equation (\ref{B}), we obtain that $$\tilde R(\Sigma)+d^{\tilde\nabla} B_1(\Sigma)+d^{\tilde\nabla}A^2(\Sigma)=d^{\tilde\nabla}\delta^*\psi.$$ This implies the following equation for the second grading component $\partial ((\delta^*\psi)_2)$ of $(d^{\tilde\nabla}\delta^*\psi)$ $$(\tilde R(\Sigma)+d^{\tilde\nabla}B_1(\Sigma))_2+\partial (A_2(\Sigma))=\partial ((\delta^*\psi)_2).$$ Applying now $\delta^*$ we see that $$\delta^*((\tilde R(\Sigma)+d^{\tilde\nabla}B_1(\Sigma))_2+\partial A_2(\Sigma))=\delta^*\partial ((\delta^*\psi)_2)=(\delta^*\psi)_2,$$ since $\delta^*\partial$ is the identity on $\im(\delta^*)$.
\\If we set $B_2(\Sigma):=A_2(\Sigma)-\delta^*((\tilde R(\Sigma)+d^{\tilde\nabla}B_1(\Sigma))_2+\partial A_2(\Sigma))$, the equation (\ref{B}) can be now written as $$\tilde\nabla(\Sigma)+B_1(\Sigma)+B_2(\Sigma)+A^3(\Sigma)=(\delta^*\psi)^3.$$ Applying again $\delta^*d^{\tilde\nabla}$, we can compute $(\delta^*\psi)_3$ and define $B_3$ by substracting the resulting expression for $(\delta^*\psi)_3$ from $A_3(\Sigma)$.
In this way, we can inductively define $$B_i(\Sigma):=A_i(\Sigma)-\delta^*([\tilde R(\Sigma)+d^{\tilde\nabla}(B_1(\Sigma)+...+B_{i-1}(\Sigma))]_{i}+\partial(A_i(\Sigma)).$$ Defining the differential operator $B$ as $B(\Sigma):=\sum_{i=1}^{N+k}B_i(\Sigma)$, it has by construction the property required in the proposition and we are done.
\end{proof}

To compute the weighted order of the differential operator $B$, we need a bit of information about the curvature $\tilde R$ of $\tilde\nabla$.

 \begin{lem}\label{curvature}
 Let $R\in\Lambda^2T^*M\otimes V$ be the curvature of the connection $\nabla$ on $V$ and $T$ the torsion of the connection $\nabla$ on $TM\cong\ggr(TM)$. Then the curvature of $\tilde\nabla=\nabla+\partial$ is given by
 $$\tilde R(\xi,\eta)(\Sigma)=R(\xi,\eta)(\Sigma)+\partial(\Sigma)(T(\xi,\eta)+\mathcal L(\xi,\eta)).$$
 Moreover, the map $\Sigma\mapsto \tilde R(\Sigma)$ is of homogeneity $\geq 1$.
 \end{lem}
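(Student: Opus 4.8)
The plan is to compute $\tilde R$ directly from the definition of the curvature of $\tilde\nabla$ and then read off the homogeneity statement from the known homogeneities of the pieces. Recall $\tilde\nabla=\nabla+\partial$, so for $\xi,\eta\in\Gamma(TM)$ and $\Sigma\in\Gamma(V)$ we have $\tilde R(\xi,\eta)(\Sigma)=\tilde\nabla_\xi\tilde\nabla_\eta\Sigma-\tilde\nabla_\eta\tilde\nabla_\xi\Sigma-\tilde\nabla_{[\xi,\eta]}\Sigma$. Expanding $\tilde\nabla_\xi=\nabla_\xi+\partial(\cdot)(\xi)$ (where I abuse notation and write $\partial(\Sigma)(\xi)$ for the value of the algebraic operator $\partial\colon V\to T^*M\otimes V$ evaluated on $\xi$), the plan is to collect the nine resulting terms into three groups: the purely $\nabla$-terms, which assemble into $R(\xi,\eta)(\Sigma)$; the terms where $\partial$ appears twice, which vanish because $\partial\circ\partial=0$ as an algebraic operator on $\Lambda^\bullet T^*M\otimes V$; and the mixed terms, one $\nabla$ and one $\partial$.

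For the mixed terms I would use that $\partial$ is parallel for $\nabla$ — this is exactly the $G_0^{ss}$-equivariance of $\partial\colon\g_-^*\otimes\V\to\V$ together with the fact that $\nabla$ is induced from a principal $G_0^{ss}$-connection, as was already used in the proof of Proposition \ref{Splittingop} for $\delta^*$. Consequently the $\nabla$-derivative hits only the argument $\Sigma$ (and the vector-field slots), and after the antisymmetrisation in $\xi,\eta$ the mixed contribution reorganises into $\partial(\Sigma)$ evaluated on $\nabla_\xi\eta-\nabla_\eta\xi-[\xi,\eta]$. By definition of the torsion $T$ of the induced connection on $TM\cong\ggr(TM)$ one has $\nabla_\xi\eta-\nabla_\eta\xi-[\xi,\eta]=T(\xi,\eta)-\mathcal L(\xi,\eta)$ up to a sign convention; matching signs with the stated formula gives $\tilde R(\xi,\eta)(\Sigma)=R(\xi,\eta)(\Sigma)+\partial(\Sigma)(T(\xi,\eta)+\mathcal L(\xi,\eta))$. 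The only real bookkeeping here is keeping the sign in $d^{\tilde\nabla}\tilde\nabla\Sigma=\tilde R(\Sigma)$ and in the Levi-bracket term consistent with the conventions fixed earlier in the paper.

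For the homogeneity statement, I would argue termwise. The algebraic operator $\partial$ is grading preserving, $\mathcal L\colon \ggr(TM)\times\ggr(TM)\to\ggr(TM)$ has the property $\mathcal L(T^iM,T^jM)\subset T^{i+j}M$ so the composition $\Sigma\mapsto\partial(\Sigma)(\mathcal L(\cdot,\cdot))$ is of homogeneity $\geq 0$ as a map to $\Lambda^2 T^*M\otimes V$ — but one checks it actually lands in homogeneity $\geq 1$ because $\partial\colon V\to T^*M\otimes V$ raises homogeneity by $1$ in the relevant indexing, or more precisely because of the shift already recorded in (\ref{gradLambda}). Meanwhile the torsion $T$ of a principal $G_0^{ss}$-connection on a regular infinitesimal flag structure has no component of homogeneity $0$ — its homogeneity-$0$ part would be a $G_0^{ss}$-invariant map $\ggr_i\times\ggr_j\to\ggr_{i+j}$, which is forced to agree with $\pm\mathcal L$ by regularity, hence the torsion term $\partial(\Sigma)(T(\xi,\eta))$ also has homogeneity $\geq 1$; and the ordinary curvature $R$ of a $G_0^{ss}$-connection, being a two-form with values in an associated bundle, is likewise of homogeneity $\geq 1$ relative to this grading (a two-form has form-degree contribution $\leq -2$ while $V$-values contribute $\geq 0$, and the homogeneity bookkeeping of (\ref{gradLambda}) gives $\geq 2\cdot 1=\geq 2\geq 1$; the key point is simply that $R$ kills no homogeneity-$0$ information). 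Summing the three contributions gives homogeneity $\geq 1$ for $\Sigma\mapsto\tilde R(\Sigma)$.

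The main obstacle I anticipate is not any single computation but getting the homogeneity of the curvature term $R$ and the torsion term exactly right, since these depend on the precise indexing conventions in (\ref{gradLambda}) and on what "homogeneity" means for a two-form versus a one-form in this filtered setting; one has to be careful that "of homogeneity $\geq 1$" is the statement that will actually be \emph{used} in computing the weighted order of $B$ in the next step, and that the bound is not off by one. The honest way to handle this is to note that $\partial$ as an algebraic operator on forms raises the homogeneous degree of (\ref{gradLambda}) by exactly $+1$ when one also accounts for the extra form-slot, and that $R,T,\mathcal L$ are all tensorial, so composing with them cannot decrease homogeneity; combined with the observation that the homogeneity-$0$ component of $\tilde R$, were it nonzero, would have to be $\partial\partial=0$ by the formula just derived, this forces homogeneity $\geq 1$.
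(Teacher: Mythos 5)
Your overall strategy (expand $\tilde R$ from the definition, group terms, use that $\partial$ is $\nabla$-parallel) is the same as the paper's, but two of your intermediate claims are wrong, and together they misidentify where the Levi-bracket term actually comes from. First, the terms in which $\partial$ appears twice do \emph{not} vanish. After expanding, these terms are $\partial(\partial\Sigma(\eta))(\xi)-\partial(\partial\Sigma(\xi))(\eta)$, which is \emph{not} $\partial(\partial\Sigma)(\xi,\eta)$: the Lie algebra cochain differential on one-cochains carries an extra bracket term, so the identity $\partial\circ\partial=0$ reads
$0=\partial(\partial\Sigma)(\xi,\eta)=\partial(\partial\Sigma(\eta))(\xi)-\partial(\partial\Sigma(\xi))(\eta)-\partial\Sigma(\mathcal L(\xi,\eta))$,
and hence the double-$\partial$ terms equal $\partial\Sigma(\mathcal L(\xi,\eta))$. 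This is exactly the source of the $\mathcal L$ term in the lemma. Correspondingly, your mixed terms produce exactly $\partial\Sigma(\nabla_\xi\eta-\nabla_\eta\xi-[\xi,\eta])=\partial\Sigma(T(\xi,\eta))$ with the standard torsion convention used in the paper --- not $T-\mathcal L$ --- so the ``matching signs with the stated formula'' step is papering over a genuine discrepancy rather than fixing a convention. If you discard the double-$\partial$ terms you simply cannot recover the $+\mathcal L$ contribution.

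Second, the termwise homogeneity claims do not hold. In the paper's indexing $\partial$ is grading \emph{preserving} (the grading (\ref{gradLambda}) already accounts for the form slots), so it does not ``raise homogeneity by $1$''; consequently $\Sigma\mapsto\partial\Sigma(\mathcal L(\cdot,\cdot))$ has a genuinely nonzero component of homogeneity exactly $0$, and so does $\Sigma\mapsto\partial\Sigma(T(\cdot,\cdot))$ (whose homogeneity-$0$ part is $-\partial\Sigma(\mathcal L(\cdot,\cdot))$, since the lowest component of $[\xi,\eta]$ under the splitting is $\mathcal L(\xi,\eta)$). Only the \emph{sum} is of homogeneity $\geq 1$, because the two-form $T+\mathcal L(\cdot,\cdot)=\nabla_\xi\eta-\nabla_\eta\xi-([\xi,\eta]-\mathcal L(\xi,\eta))$ has no homogeneity-$0$ part; this is the argument the paper gives, and it cannot be distributed over the two summands separately. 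Your closing remark --- that the homogeneity-$0$ component of $\tilde R=d^{\tilde\nabla}\circ\tilde\nabla$ must be $\partial\circ\partial=0$ --- is a correct alternative route to the homogeneity statement, but to make it work you must identify the lowest homogeneous component of $d^{\tilde\nabla}$ on one-forms as the full cochain differential \emph{including} the $\mathcal L$-correction, which is precisely the point missed in the first error.
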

 
 \begin{proof}
 The curvature of $\tilde\nabla$ is given by $\tilde R(\xi,\eta)(\Sigma)=\tilde\nabla_\xi\tilde\nabla_\eta\Sigma-\tilde\nabla_\eta\tilde\nabla_\xi\Sigma-\tilde\nabla_{[\xi,\eta]}\Sigma.$
 For the first term we have
 \begin{equation}\label{R}
 \tilde\nabla_\xi\tilde\nabla_\eta\Sigma=\nabla_\xi\nabla_\eta\Sigma+\nabla_\xi(\partial(\Sigma)(\eta))+\partial(\nabla_\eta\Sigma)(\xi)+\partial(\partial\Sigma(\eta))(\xi).
 \end{equation}
The second summand of (\ref{R}) can be written as $\nabla_\xi(\partial(\Sigma)(\eta))=(\nabla_\xi(\partial\Sigma))(\eta)+\partial\Sigma(\nabla_\xi\eta).$ Since $\partial: \V\rightarrow\g_-^*\otimes\V$ is $G_0^{ss}$-equivariant, the induced vector bundle map is parallel and so we have $(\nabla_\xi(\partial\Sigma))(\eta)=\partial(\nabla_\xi\Sigma)(\eta)$. Putting this together, we obtain that
 $$\tilde\nabla_\xi\tilde\nabla_\eta\Sigma=\nabla_\xi\nabla_\eta\Sigma+\partial(\nabla_\xi\Sigma)(\eta)+\partial\Sigma(\nabla_\xi\eta)+\partial(\nabla_\eta\Sigma)(\xi)+\partial(\partial\Sigma(\eta))(\xi).$$ Therefore we have
 \begin{align}\label{R1}
 \tilde R(\xi,\eta)(\Sigma) &=R(\xi,\eta)(\Sigma)+\partial\Sigma(T(\xi,\eta))+\partial(\partial\Sigma(\eta))(\xi)-\partial(\partial\Sigma(\xi))(\eta) \nonumber\\
  &=R(\xi,\eta)(\Sigma)+\partial(\Sigma)(T(\xi,\eta)+\mathcal L(\xi,\eta)),
  \end{align}
 since $$0=\partial(\partial\Sigma)(\xi,\eta)=\partial(\partial\Sigma(\eta))(\xi)-\partial(\partial\Sigma(\xi))(\eta)-\partial\Sigma(\mathcal L(\xi,\eta)).$$
 \\Since $\tilde R(\Sigma)$ equals $d^{\tilde\nabla}\tilde\nabla(\Sigma)$, the map $\Sigma\mapsto\tilde R(\Sigma)$ is at least of homogeneity $\geq 0$. To see that is actually of homogeneity $\geq 1$ we consider the formula (\ref{R1}).
The curvature $\Sigma\mapsto R(\Sigma)=d^\nabla\nabla(\Sigma)$ of $\nabla$ is of homogeneity $\geq 2$, since $\nabla$ and $d^\nabla$ both are of homogeneity $\geq 1$.
Now consider the second term of (\ref{R1}) given by $\partial(\Sigma)(T(\xi,\eta)+\mathcal L(\xi,\eta))$. We have $$T(\xi,\eta)+\{\xi,\eta\}=\nabla_\xi\eta-\nabla_\eta\xi-[\xi,\eta]+\mathcal L(\xi,\eta)$$ and under the identification of $\gr(TM)$ with $TM$ we can view $\mathcal L(\xi,\eta)$ as the grading component of lowest degree $-(\textrm{ord}(\xi)+\textrm{ord}(\eta))$ of $[\xi,\eta]$. Therefore the two form
 $T+\mathcal L(\, ,\, )$ is of homogeneity $\geq 1$. This implies that $$\Sigma\mapsto\partial(\Sigma)(T+\mathcal L( \,,\, ))$$ is of homogeneity $\geq 1$, since $\partial$ is filtration preserving.
 \end{proof}

Using Lemma \ref{curvature} we are able to determine the weighted order of $B$:

\begin{prop}\label{propB2}
The differential operator $B$ is of weighted order $N+k-1$, where $k$ is the depth of the filtration of $TM$. Therefore it defines a bundle map
$$B: \J^{N+k-1}(V)\rightarrow T^*M\otimes V.$$
Moreover, the component $B_i$ factors through $\J^{i-1}(V_0)\oplus...\oplus \J^{1}(V_{i-2})\oplus V_{i-1}.$
\end{prop}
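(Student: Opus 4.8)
The plan is to prove the three assertions of Proposition~\ref{propB2} by carefully tracking the homogeneities through the inductive definition of the operators $B_i$ given in the proof of Proposition~\ref{propB1}, using Lemma~\ref{curvature} for the curvature term and Proposition~\ref{Splittingop} for the weighted order of $L$. Recall that $B(\Sigma)=\sum_{i=1}^{N+k}B_i(\Sigma)$ with $B_1(\Sigma)=A_1(\Sigma)$ and, inductively,
$$B_i(\Sigma)=A_i(\Sigma)-\delta^*\bigl([\tilde R(\Sigma)+d^{\tilde\nabla}(B_1(\Sigma)+\dots+B_{i-1}(\Sigma))]_i+\partial(A_i(\Sigma))\bigr).$$
Here $A$ is the bundle map from Proposition~\ref{propA}, which factors through $V_0\oplus\dots\oplus V_{r-1}$ via the isomorphism $L^{r-1}:\J^{r-1}(E)\cong V_0\oplus\dots\oplus V_{r-1}$; so, as an operator on sections of $E$, $A$ is of weighted order $r-1$, and more precisely the composite $A\circ L^j$ reads off jets of the appropriate orders along each graded piece. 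The key observation is that $B_i(\Sigma)$ lands in the grading component $(T^*M\otimes V)_i$, i.e.\ it involves $V_{i-j}$ paired against $(\Lambda^1_{-j}\gr(TM))^*$, so the ``$V$-degree'' involved in $B_i$ drops as $i$ grows.

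First I would establish the bundle-map/weighted-order claim by induction on $i$. The base case $B_1=A_1$ is algebraic (weighted order $0$). For the inductive step, the term $A_i(\Sigma)$ and the correction $\delta^*\partial A_i(\Sigma)$ are again algebraic in $\Sigma$. The only differential contributions come from $\delta^* d^{\tilde\nabla}(B_1+\dots+B_{i-1})$. Since $d^{\tilde\nabla}$ raises weighted order by $1$ and, by Lemma~\ref{curvature}, the curvature term $\Sigma\mapsto\tilde R(\Sigma)$ is algebraic (a bundle map, of homogeneity $\geq 1$), the inductive hypothesis that $B_j$ has weighted order $\leq j-1$ gives that $d^{\tilde\nabla}B_j$ has weighted order $\leq j$, hence $B_i$ has weighted order $\leq\max_{j<i}(j)=i-1$. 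Summing over $i=1,\dots,N+k$ yields that $B$ has weighted order $\leq N+k-1$, so $B$ induces a bundle map $\J^{N+k-1}(V)\to T^*M\otimes V$; here the range $i\leq N+k$ is forced by the fact that $\tilde R(\Sigma)$ and $d^{\tilde\nabla}$ each can raise the homogeneous degree from the top piece $V_N$ by at most $k$ (the depth), since $(\Lambda^2_{-t}\gr(TM))^*$ and $(\Lambda^1_{-j}\gr(TM))^*$ only occur for $t\leq 2k$, $j\leq k$, while the $\delta^*$ brings it back into a one-form.

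For the final refinement — that $B_i$ factors through $\J^{i-1}(V_0)\oplus\dots\oplus\J^1(V_{i-2})\oplus V_{i-1}$ — I would again argue inductively, now tracking which graded component $V_m$ of $\Sigma$ each term of $B_i$ depends on and to what jet order. The algebraic terms $A_i(\Sigma)$ and $\delta^*\partial A_i(\Sigma)$ depend only on the components $V_0,\dots,V_{i-1}$ of $\Sigma$ (indeed, by the grading $B_i(\Sigma)\in(T^*M\otimes V)_i$, so only $V_m$ with $m\leq i-1$ can contribute), and algebraically, i.e.\ to jet order $0$ — consistent with the claimed factorisation since the summand $V_{i-1}$ appears without a jet. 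The differential terms $\delta^* d^{\tilde\nabla}B_j(\Sigma)$ with $j<i$: by induction $B_j$ factors through $\J^{j-1}(V_0)\oplus\dots\oplus\J^1(V_{j-2})\oplus V_{j-1}$, and applying $d^{\tilde\nabla}$ (weighted order $1$) followed by the grading-preserving $\delta^*$ shifts each jet slot up by one and lands in degree $i$, so after applying $\delta^*$ the dependence is on $\J^{j}(V_0)\oplus\dots\oplus\J^2(V_{j-2})\oplus\J^1(V_{j-1})$, which is contained in $\J^{i-1}(V_0)\oplus\dots\oplus\J^1(V_{i-2})\oplus V_{i-1}$ because $j\leq i-1$. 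The curvature contribution $\delta^*(\tilde R(\Sigma))_i$ is algebraic and of homogeneity $\geq 1$ by Lemma~\ref{curvature}, so it depends on $V_0,\dots,V_{i-2}$ only (not on $V_{i-1}$), again fitting the pattern. Assembling these gives the stated factorisation.

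The main obstacle I anticipate is the bookkeeping in the second induction: one must be careful that $d^{\tilde\nabla}$ genuinely behaves as ``weighted order $1$ plus a shift of the homogeneous grading by one'' on the graded pieces — this rests on the fact that $\tilde\nabla$ is of homogeneity $\geq 0$ with lowest component $\partial$ and that the curvature correction is purely algebraic (Lemma~\ref{curvature}), so that no hidden extra derivative appears in $d^{\tilde\nabla}$ beyond the single covariant derivative. Once this is pinned down, the index arithmetic relating the jet orders in slot $V_m$ (namely $i-1-m$ for $B_i$) to the depth bound $i\leq N+k$ is routine; the only subtlety is confirming that the $\delta^*$ appearing in the formula for $B_i$, being filtration-preserving but only homogeneity $\geq 0$ with possibly nonzero higher components, does not spoil the homogeneity count — but since in each step $\delta^*$ is applied to something already placed in a fixed grading component, its grading-preserving part is all that matters, and the formula is built precisely so that the lowest piece $\delta^*\partial=\mathrm{id}$ on $\im(\delta^*)$ does the cancellation.
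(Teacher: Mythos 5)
Your proposal follows the paper's proof essentially verbatim in structure: an induction on $i$ through the recursive definition of $B_i$, using homogeneity $\geq 1$ of $A$ and of $\tilde R$ (Lemma \ref{curvature}) to handle the algebraic terms, and an analysis of $d^{\tilde\nabla}B_j$ for the differential ones; the stated conclusions are correct. However, the point you yourself flagged as the main obstacle is exactly where your bookkeeping goes wrong: $d^{\tilde\nabla}$ does \emph{not} act on the graded pieces as ``weighted order $1$ plus a shift of the grading by one.'' Since $\nabla_\xi$ for $\xi\in\Gamma(T^{-s}M)$ raises the homogeneous degree by $s$, the degree-$i$ component of $d^{\tilde\nabla}B_j(\Sigma)$ involves first derivatives of $B_j(\Sigma)$ along vector fields of weighted order $i-j$ (this is precisely what the paper records), i.e.\ a weighted-order-$(i-j)$ differentiation, not a weighted-order-$1$ one. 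Concretely, $[d^{\tilde\nabla}B_1(\Sigma)]_i$ depends on the weighted $(i-1)$-jet of $\Sigma_0$, not on its $1$-jet as your count asserts. Your final bounds survive only by coincidence of maxima: the correct jet order in the $V_m$-slot contributed by $d^{\tilde\nabla}B_j$ is $(j-1-m)+(i-j)=i-1-m$, which matches the target $\J^{i-1-m}(V_m)$ for \emph{every} $j<i$, whereas your count $j-m$ is merely dominated by it; taken literally, your intermediate claim would assert a strictly stronger (and false) factorisation of $B_i$. The same correction is needed in the weighted-order count of the first part: $[d^{\tilde\nabla}B_j]_i$ has weighted order $\leq (j-1)+(i-j)=i-1$, which again yields $B_i$ of weighted order $\leq i-1$ and $B$ of weighted order $N+k-1$, so the proposition is unharmed once the accounting is repaired. (A minor further quibble: the reason $A_i(\Sigma)$ depends only on $\Sigma_0,\dots,\Sigma_{i-1}$ is the homogeneity $\geq 1$ of $A$, not the grading of the target $(T^*M\otimes V)_i$ as your parenthetical suggests.)
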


\begin{proof}
Let us write a section $\Sigma\in\Gamma(V)$ as $\Sigma=(\Sigma_0,...,\Sigma_N)$.
We shall prove the proposition by induction on $i$.
\\Since $A:V\rightarrow T^*M\otimes V$ is of homogeneity $\geq 1$, $B_1(\Sigma)=A_1(\Sigma)$ just depends on $\Sigma_0$ and so the assertion holds for $i=1$.
\\Now consider $B_2(\Sigma)=A_2(\Sigma)-\delta^*((\tilde R(\Sigma)+d^{\tilde\nabla}B_1(\Sigma))_2+\partial A_2(\Sigma))$.
The component $A_2(\Sigma)$ depends on $\Sigma_0$ and $\Sigma_1$, since $A$ is of homogeneity $\geq 1$. By Lemma \ref{curvature} we know that $\Sigma\mapsto \tilde R(\Sigma)$ is also of homogeneity $\geq 1$ and therefore $\tilde R(\Sigma)_2$ only depends on $\Sigma_0$ and $\Sigma_1$. So it remains to look at the term
$$(d^{\tilde\nabla}B_1(\Sigma))_2\in\Gamma(\gr_{-1}(TM)^*\wedge\gr_{-1}(TM)^*\otimes V_0).$$
Since $\partial$ is grading preserving, we have for $\xi,\eta\in \Gamma(T^{-1}M)$ that
\begin{align}
&d^{\tilde\nabla}B_1(\Sigma)(\xi,\eta)=\nonumber\\
&=\nabla_\xi(B_1(\Sigma)(\eta))-\nabla_\eta(B_1(\Sigma)(\xi))+\partial(B_1(\Sigma)(\eta))(\xi)-\partial(B_1(\Sigma)(\xi))(\eta)-B_1(\Sigma)([\xi,\eta])\nonumber\\
&=\nabla_\xi(B_1(\Sigma)(\eta))-\nabla_\eta(B_1(\Sigma)(\xi))+\partial (B_1(\Sigma))(\xi,\eta)-B_1(\Sigma)([\xi,\eta]-\mathcal L(\xi,\eta))\nonumber\\
&=\nabla_\xi(B_1(\Sigma)(\eta))-\nabla_\eta(B_1(\Sigma)(\xi))-B_1(\Sigma)([\xi,\eta]-\mathcal L(\xi,\eta))\nonumber,
\end{align}
Since the component $B_1(\Sigma)\in\gr_{-1}(TM)^*\otimes V_0$ just depends on $\Sigma_0$, we therefore conclude that $(d^{\tilde\nabla}B_1(\Sigma))_2$ depends on the weighted one jet of $\Sigma_0$. In total, we see that $B_2$ induces a bundle map $\J^1(V_0)\oplus V_1\rightarrow (T^*M\otimes V)_2$.
\\Now assume the statement is true for $B_i$ with $i<N+k$. The $i+1$-th component is given by
$$B_{i+1}(\Sigma):=A_{i+1}(\Sigma)-\delta^*([\tilde R(\Sigma)+d^{\tilde\nabla}(B_1(\Sigma)+...+B_{i}(\Sigma))]_{i+1}+\partial(A_{i+1}(\Sigma)).$$ Again, since $A$ and $\tilde R$ are of homogeneity $\geq 1$,
$A_{i+1}(\Sigma)$ and $(\tilde R(\Sigma))_{i+1}$, depend only on $\Sigma_0,....,\Sigma_i$. So it remains to study the term $$(d^{\tilde\nabla}(B_1(\Sigma)+...+B_{i}(\Sigma)))_{i+1}.$$
\\For $j<i+1$ consider $B_j(\Sigma)\in\Gamma(\bigoplus_{\ell=1}^j\gr_{-\ell}(TM)^*\otimes V_{j-\ell}).$
We know that the operator $d^{\tilde\nabla}$ is of homgeneity $\geq 0$ and hence we have $d^{\tilde\nabla}(B_j(\Sigma))\in\Gamma((\Lambda^2\gr(TM)^*\otimes V)^j)$.
Consider
\begin{align}
&(d^{\tilde\nabla}B_j(\Sigma))_{i+1}(\xi,\eta)=\nonumber\\
&=[\nabla_\xi(B_j(\Sigma)(\eta))-\nabla_\eta(B_j(\Sigma)(\xi))+\partial (B_j(\Sigma))(\xi,\eta)-B_j(\Sigma)([\xi,\eta]-\mathcal L(\xi,\eta))]_{i+1}\nonumber.
\end{align}
Obviously $d^{\tilde\nabla}(B_j(\Sigma))_{i+1}$ depends only on $B_j(\Sigma)$ and derivatives of $B_j(\Sigma)$ in direction of vector fields of order $i+1-j$. The claim now follows from the assumption that $B_j(\Sigma)$ factors through $\J^{j-1}(V_0)\oplus...\oplus V_{j-1}$ for all $j<i+1$.
\end{proof}

Now one can do the last step in rewriting the equation $D(s)=0$ by solving $\tilde\nabla\Sigma+B(\Sigma)=0$ component by component.

\begin{prop}\label{propC}
Suppose that  $B:\J^{N+k-1}(V)\rightarrow T^*M\otimes V$ is a bundle map such that its $i$-th component
$B_i: \J^{N+k-1}(V)\rightarrow(T^*M\otimes V)_i$ factors through
$\J^{i-1}(V_0)\oplus...\oplus \J^{1}(V_{i-2})\oplus V_{i-1}.$ Then there exists a bundle map $C:V\rightarrow T^*M\otimes V$ such that 
$$\tilde\nabla\Sigma+B(\Sigma)=0\quad\textrm{ if and only if }\quad \tilde\nabla\Sigma+C(\Sigma)=0.$$ If $B$ is a vector bundle homomorphism, then also $C$ can be chosen to be a vector bundle homomorphism.
\end{prop}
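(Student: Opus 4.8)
The plan is to build $C$ out of homogeneous components $C_i\colon V\to(T^*M\otimes V)_i$, $i=1,\dots,N+k$, by induction on $i$: having fixed the splitting $TM\cong\gr(TM)$, I would use the parts of the equation $\tilde\nabla\Sigma+B(\Sigma)=0$ of homogeneous degree $<i$ to eliminate, one at a time, all the jet-arguments on which $B_i$ depends, replacing them by fibrewise (order-zero) expressions in $\Sigma$. The starting observation is that, writing $\Sigma=(\Sigma_0,\dots,\Sigma_N)$ and using that $\nabla$ preserves the grading of $V$ while $\partial$ is grading preserving after the splitting, the degree-$i$ part of the left hand side is
$$[\tilde\nabla\Sigma+B(\Sigma)]_i=\sum_{t=1}^{k}(\nabla\Sigma_{i-t})|_{\gr_{-t}(TM)}+\partial\Sigma_i+B_i(\Sigma),$$
so that, extracting the $\gr_{-t}(TM)^*\otimes V_m$-component with $m=i-t$, the equation $[\tilde\nabla\Sigma+B(\Sigma)]_i=0$ reads
$$(\nabla\Sigma_{m})|_{\gr_{-t}(TM)}=-(\partial\Sigma_{m+t})|_{\gr_{-t}(TM)}-B_{m+t}(\Sigma)|_{\gr_{-t}(TM)}.$$
Hence the equation in degrees $<i$ expresses every weighted-order-$t$ covariant derivative $\nabla|_{\gr_{-t}(TM)}\Sigma_{m}$ with $m+t<i$ through the \emph{value} of $\Sigma_{m+t}$ and $B_{m+t}(\Sigma)$.

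Now the induction. Put $C_1:=B_1$, which by hypothesis already factors through $V_0$. Assume $C_1,\dots,C_{i-1}$ are constructed as bundle maps and that for each $j<i$ one has $B_j(\Sigma)=C_j(\Sigma)+R_j(\Sigma)$ with $R_j$ a differential operator vanishing on every section with $[\tilde\nabla\Sigma+C(\Sigma)]_{j'}=0$ for all $j'<j$. Since $B_i$ factors through $\J^{i-1}(V_0)\oplus\J^{i-2}(V_1)\oplus\dots\oplus\J^{1}(V_{i-2})\oplus V_{i-1}$, the identification of these weighted jet bundles (via the connection and the splitting) expresses $B_i(\Sigma)$ in terms of iterated covariant derivatives $\nabla|_{\gr_{-t_1}(TM)}\!\cdots\nabla|_{\gr_{-t_s}(TM)}\Sigma_m$ with $t_1+\dots+t_s+m\le i-1$. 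Using the displayed relation (with $B_{m+t}$ already replaced by $C_{m+t}$) one rewrites each single covariant derivative $\nabla|_{\gr_{-t}(TM)}\Sigma_{m}$, $m+t<i$, as $\Phi^{(t)}_m(\Sigma):=-(\partial\Sigma_{m+t})|_{\gr_{-t}(TM)}-C_{m+t}(\Sigma)|_{\gr_{-t}(TM)}$ up to the remainder $[\tilde\nabla\Sigma+C(\Sigma)]_{m+t}|_{\gr_{-t}(TM)}+R_{m+t}(\Sigma)|_{\gr_{-t}(TM)}$; and for iterated derivatives one substitutes from the inside out, differentiating the bundle maps $\Phi^{(t)}_m$ (using that $\partial$ is parallel and Lemma \ref{curvature} for the curvature terms) — this produces only covariant derivatives of components of index $\le m+t$, hence again of total weight $\le i-1$, so the same substitutions apply and the recursion terminates. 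Substituting into $B_i$, and for a general smooth $B$ writing $f(u)-f(v)=g(u,v)(u-v)$ (Hadamard's lemma) for the remainders, yields a bundle map $C_i\colon V\to(T^*M\otimes V)_i$ together with a relation $B_i(\Sigma)=C_i(\Sigma)+R_i(\Sigma)$ in which $R_i$ is assembled from the components $[\tilde\nabla\Sigma+C(\Sigma)]_j$, $j<i$, and their covariant derivatives, hence vanishes on sections with $[\tilde\nabla\Sigma+C(\Sigma)]_{<i}=0$.

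From the resulting identities $[\tilde\nabla\Sigma+B(\Sigma)]_i=[\tilde\nabla\Sigma+C(\Sigma)]_i+R_i(\Sigma)$ the equivalence of the two systems follows by a trivial induction on $i$: if $\tilde\nabla\Sigma+C(\Sigma)=0$ then all $R_i(\Sigma)$ vanish and so $\tilde\nabla\Sigma+B(\Sigma)=0$; conversely, assuming $[\tilde\nabla\Sigma+C(\Sigma)]_{<i}=0$ and $\tilde\nabla\Sigma+B(\Sigma)=0$ forces $R_i(\Sigma)=0$ and hence $[\tilde\nabla\Sigma+C(\Sigma)]_i=0$. If $B$ is a vector bundle homomorphism, then inductively every $C_{m+t}$, hence every $\Phi^{(t)}_m$, is linear, differentiation preserves linearity, and the remainders split off linearly ($f(u)-f(v)=f(u-v)$), so each $C_i$ and therefore $C$ is a vector bundle homomorphism; combined with Propositions \ref{propA}, \ref{propB1}, \ref{propB2} this also completes the proof of Theorem \ref{main1}.

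The only place real care is needed is the weight bookkeeping in the second paragraph: one must organise the recursive substitution so that it visibly terminates and never requires eliminating a covariant derivative of homogeneous degree $\ge i$. This is exactly what the precise factorisation hypothesis on $B_i$ (each argument $\J^{i-1-m}(V_m)$ having total weight $i-1$) guarantees, and it is the reason Proposition \ref{propB2} was phrased in that sharp form.
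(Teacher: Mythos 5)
Your proof is correct and follows essentially the same strategy as the paper: induction on the homogeneous degree, using the components of the equation in degrees $<i$ to replace the covariant derivatives on which $B_i$ depends by algebraic expressions in $\Sigma_0,\dots,\Sigma_{i-1}$, with the factorisation hypothesis of Proposition \ref{propB2} guaranteeing that the weight bookkeeping closes up. The only differences are presentational — you substitute the individual pieces $\nabla|_{\ggr_{-t}(TM)}\Sigma_m$ from the inside out and make the remainders $R_i$ explicit, whereas the paper tracks the grading components $(\tilde\nabla^{\ell}\Sigma)_{\leq i}$ of the iterated full covariant derivatives — so no further comment is needed.
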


\begin{proof}
The linear connection $\tilde\nabla=\nabla+\partial$ is of homogeneity $\geq 0$ with lowest homogeneous component given by the vector bundle map $\partial$.
Since we have a linear connection $\nabla$ on $TM\cong\gr(TM)$, we can form iterated covariant derivatives $\tilde\nabla^i$. We know that the linear connection on $TM$ is of homogeneity $\geq 1$, since $\nabla: \Gamma(\gr_i(TM))\rightarrow\Gamma(\gr(TM)^*\otimes \gr_i(TM))$, and hence, since $\tilde\nabla$ is of homogeneity $\geq 0$ with lowest homogeneous component $\partial$, we conclude that the iterated covariant derivative $\tilde\nabla^i$ is also of homogeneity $\geq 0$ and that its lowest homogeneous component is algebraic. By assumption on $B$ we therefore deduce that the component $B_i$ just depends on $\Sigma_{\leq i-1}$, $(\tilde\nabla\Sigma)_{\leq i-1}$,...,$(\tilde\nabla^{i-1}\Sigma)_{\leq i-1}$ and we may write $$B_i(\Sigma)=B_i(\Sigma_{\leq i-1},(\tilde\nabla\Sigma)_{\leq i-1},...,(\tilde\nabla^{i-1}\Sigma)_{\leq i-1}),$$
where $(_-)_{\leq i-1}$ means that we restrict to grading components of degree $\leq i-1$.  Let us now consider the equation $\tilde\nabla(\Sigma)+B(\Sigma)=0$ grading component by grading component.
For the first component we get $(\tilde\nabla\Sigma)_1+B_1(\Sigma)=0$ and we set $C_1(\Sigma):=B_1(\Sigma_0)$. For the second component we have $$(\tilde\nabla\Sigma)_2+B_2(\Sigma_0,\Sigma_1,(\tilde\nabla\Sigma)_1)=0$$ and we define $C_2(\Sigma_0,\Sigma_1):=B_2(\Sigma_0,\Sigma_1,-C_1(\Sigma_0))$. By construction we have
\begin{equation}\label{C2}
((\tilde\nabla\Sigma)+B(\Sigma))_{\leq 2}=0 \quad\textrm{ if and only if }\quad((\tilde\nabla\Sigma)+C(\Sigma))_{\leq 2}=0,
\end{equation}
where $C=C_1+C_2$.
\\Suppose now inductively that we have found bundle maps $C_1,...,C_i$ for $i<N+k$ such that
\begin{equation}\label{Ci}
(\tilde\nabla\Sigma+B(\Sigma))_{\leq i}=0 \quad\textrm{ if and only if }\quad(\tilde\nabla\Sigma+C(\Sigma))_{\leq i}=0,
\end{equation}
where $C=C_1+...+C_i$ and $C_j$ depends only on $\Sigma_{\leq j-1}$.
Assume further that for any section $\Sigma$ satisfying (\ref{Ci})
we have derived algebraic expressions in terms of
$\Sigma_0,...,\Sigma_{\leq i-1}$ for all $(\tilde\nabla^\ell\Sigma)_{\leq i}$ with $\ell=1,...,i$.
Inserting these expressions into $B_{i+1}(\Sigma)$, we obtain a bundle map $C_{i+1}(\Sigma_0,...,\Sigma_i)$ such that
\begin{equation}\label{Ci+1}
(\tilde\nabla\Sigma+B(\Sigma))_{\leq i+1}=0\quad\textrm{ if and only if } \quad(\tilde\nabla\Sigma+C(\Sigma))_{\leq i+1}=0,
\end{equation}
$C=C_1+...+C_{i+1}$.
\\It remains to show that for any section $\Sigma$ satisfying (\ref{Ci+1}) we can deduce algebraic expressions in terms of $\Sigma_0,...,\Sigma_{i}$ for all $(\tilde\nabla^{\ell}\Sigma)_{\leq i+1}$ occurring in $B_{i+2}$, where $\ell=1,...,i+1$. Since $\tilde\nabla ^j$ is of homogeneity $\geq 0$,
$(\tilde\nabla\Sigma+C(\Sigma))_{\leq i+1}=0$ implies that $(\tilde\nabla^{j}(\tilde\nabla\Sigma+C(\Sigma)))_{\leq i+1}=0$.
The differential operator $$((\tilde\nabla^1 C(\Sigma))_{i+1},...,(\tilde\nabla^i C(\Sigma))_{i+1})$$ depends on the weighted $i$-jet of $C_1(\Sigma)$, on the weighted $i-1$-jet of $C_2(\Sigma)$,...,on the weighted one jet of $C_i(\Sigma)$ and algebraic on $C_{i+1}(\Sigma)$.  Therefore it just depends on $\Sigma_{\leq i}$, $(\tilde\nabla\Sigma)_{\leq i}$..., $(\tilde\nabla^i\Sigma)_{\leq i }$, for which we have by induction hypothesis algebraic formulae in terms of $\Sigma_0,...,\Sigma_i$. Hence we get formulae in terms of $\Sigma_0,..,\Sigma_i$ for $(\tilde\nabla^{j+1}\Sigma)_{\leq i+1}$ with $j=0,...,i$ and we are done. If $B$ is a linear differential operator, $C$ will be a vector bundle map by construction.
\end{proof}

Combining Propositions \ref{propA}, \ref{propB1}, \ref{propB2} and \ref{propC}, we have proved Theorem \ref{main1}. 
In particular, if a system $D(s)=0$ of the form of Theorem \ref{main1} is linear, the bundle map $C$ is a vector bundle map and solutions of $D(s)=0$ correspond bijectively to parallel sections of the linear connection $\tilde\nabla+C$. Since a parallel section is already determined by its value in a single point, we obtain as a corollary:

\begin{cor}\label{bound}
Let $\E$ be an irreducible representation of $G_0^{ss}$ and $r>0$ be an integer.
For a linear differential operator $D:\Gamma(E)\rightarrow\Gamma(\circledcirc^r\ggr_{-1}(TM)^*\circledcirc E)$ of weighted order $r$ with weighted symbol given by (\ref{naturalprojection}) the solution space of $D(s)=0$ is finite dimensional and bounded by the dimension of the irreducible $G$ representation $\V[\E,r]$.
Moreover, if the grading (\ref{decomV1}) of $\V[\E,r]$ is of the form $\V[\E,r]=\V_0\oplus...\oplus\V_N$ a solution is already determined by its weighted $N$-jet in a single point.
\end{cor}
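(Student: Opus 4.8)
The plan is to read the statement off directly from Theorem~\ref{main1}, of which it is essentially an immediate corollary. First I would apply that theorem to the given operator $D$: since $D$ is linear, it is in particular semi-linear of weighted order $r$ with weighted symbol given by (\ref{naturalprojection}), so Theorem~\ref{main1} produces a natural graded vector bundle $V=V_0\oplus\dots\oplus V_N$ with $V_0=E$, a linear connection $\widetilde\nabla$ on $V$, a linear differential operator $L\colon\Gamma(V_0)\to\Gamma(V)$ of weighted order $N$ with $p_0\circ L=\mathrm{id}$, and a bundle map $C\colon V\to T^*M\otimes V$ which, because $D$ is linear, is a \emph{vector} bundle map, such that $L$ and $p_0$ induce inverse bijections between $\{s\in\Gamma(E):D(s)=0\}$ and $\{\Sigma\in\Gamma(V):(\widetilde\nabla+C)(\Sigma)=0\}$. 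Since $L$ and $p_0$ are linear, this is in fact a linear isomorphism of vector spaces; and since $C$ is a vector bundle map, $\widetilde\nabla+C$ is an honest linear connection on $V$, so the second space is precisely the space of its parallel sections.

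Second, I would invoke the elementary fact that on a (connected) manifold a parallel section of a linear connection is determined by its value at any single point $x_0$, so that evaluation at $x_0$ embeds the space of $(\widetilde\nabla+C)$-parallel sections linearly into the fibre $V_{x_0}$. Hence that space, and therefore the solution space of $D(s)=0$, has dimension at most $\operatorname{rank}(V)$; in particular it is finite dimensional. It then remains only to identify $\operatorname{rank}(V)$ with $\dim\V[\E,r]$: by the construction in Step~1 of the proof of Theorem~\ref{main1} one has $V_i=\mathcal G_0^{ss}\times_{G_0^{ss}}\V_i$ and $\V[\E,r]=\V_0\oplus\dots\oplus\V_N$, so $\operatorname{rank}(V)=\sum_{i=0}^N\dim\V_i=\dim\V[\E,r]$. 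This gives the asserted bound.

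Third, for the last assertion: given a solution $s$, the corresponding parallel section is $\Sigma=L(s)$, which, being parallel, is determined by $\Sigma(x_0)=L(s)(x_0)$. But $L$ has weighted order $N$, so by the definition of weighted order for differential operators the value $L(s)(x_0)$ depends only on the weighted $N$-jet $j^N_{x_0}s$. Since $s=p_0(\Sigma)$, the solution $s$ is therefore determined by $j^N_{x_0}s$, which is exactly the claim that a solution is already determined by its weighted $N$-jet in a single point.

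There is no serious obstacle here, as all the work is contained in Theorem~\ref{main1}; the only minor points to keep track of are the (implicit) connectedness of $M$, used to pass from "determined by the value at $x_0$" to "globally determined", and the bookkeeping identification $\operatorname{rank}(V)=\dim\V[\E,r]$ which follows from the grading $\V[\E,r]=\V_0\oplus\dots\oplus\V_N$ together with the fact that each $V_i$ is the bundle associated to $\V_i$.
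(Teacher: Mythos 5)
Your proposal is correct and follows essentially the same route as the paper: the corollary is read off from Theorem~\ref{main1} by observing that linearity of $D$ makes $C$ a vector bundle map, so that solutions correspond to parallel sections of the linear connection $\widetilde\nabla+C$ on $V$, which are determined by their value at a single point, and $\operatorname{rank}(V)=\dim\V[\E,r]$ by construction. Your additional remark that the final assertion follows because $L$ has weighted order $N$ (so $L(s)(x_0)$ depends only on $j^N_{x_0}s$) is exactly the intended, if unstated, argument in the paper.
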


\begin{rem}
In nearly all cases a regular infinitesimal flag structure on a manifold $M$ determines a regular normal parabolic geometry of the same type. A large class of invariant differential operators for parabolic geometries occur as differential operators in some BGG-sequence, see \cite{CSS} and \cite{CD}. If the center of $\g_0$ is one dimensional the first operator in a BGG-sequence is always a linear differential operator of the form described in Theorem \ref{main1} and hence the prolongation procedure presented here applies to them. On the one hand this shows that Theorem \ref{main1} covers a lot of geometrically interesting equations, like the equation for the infinitesimal automorphisms or in the case of conformal geometries the equations for conformal Killing tensors, the equation for twistor spinors and the equation for Einstein scales.
On the other hand it shows that the bound in Corollary \ref{bound} is sharp. In fact, considering the homogenous space $G/P$ with its canonical regular infinitesimal flag structure of type $(G,P)$ (see \cite{CSbook}), it turns out that in this case $D^{\nabla}$ equals the first operator in the BGG-sequence corresponding to $V$ and that solutions of $D^{\nabla}(s)=0$ correspond to parallel sections of the flat tractor connection on $V$, see \cite{CG}. The flatness of the connection implies that the dimension of the solution space equals the rank of $V$.
\end{rem}

\begin{rem}
For the first BGG-operators it was recently shown in \cite{HSSS} how to construct a linear connection on the corresponding tractor bundle $V$, whose parallel sections correspond bijectively to solutions of the linear system of equations defined by a first BGG-operator. This approach has the feature that the connection on $V$ is natural with respect to the parabolic geometry respectively its underlying regular infinitesimal flag structure. Our approach in contrast, although not invariant, works for a larger class of operators, namely all semi-linear operators having the same weighted symbol as some first BGG-operator. Note also that to apply Theorem \ref{main1} one just has to check if the operator in question has the right weighted symbol and one doesn't need to know, if one is dealing with a BGG-operator.   
\end{rem}

\subsection{Applications to overdetermined systems on contact manifolds}\label{AppCon}

For $n\geq 1$ consider $\R^{2n+2}$ endowed with the skew-symmetric non-degenerate bilinear form $$<(x_0,...,x_{2n+1}), (y_0,...,y_{2n+1})>=x_0y_{2n+1}-y_0x_{2n+1}+\sum_{i=1}^n(x_iy_{n+i}-x_{n+i}y_i).$$
Moreover, let $$\g=\s\p(2n+2,\R)=\{A\in \textrm{End}(\R^{2n+2}): <Ax,y>=-<x,Ay> \textrm{for all} x,y\in\R^{2n+2}\}$$ be the symplectic Lie algebra with respect to $< \,,\, >$.
\\It turns out that $\g$ is given by block matrices of block sizes $1$, $n$, $n$ and $1$ of the following form:
$$\g=\left\{\left(\begin{matrix} a & Z  & W & z\\X & A & B&W^t \\
Y&C & -A^t&-Z^t\\x& Y^t&-X^t&-a\\\end{matrix}\right): B^t=B, C^t=C\right\}.$$
This realisation of $\g$ defines a $|2|$-grading on $\g=\g_{-2}\oplus\g_{-1}\oplus\g_0\oplus\g_1\oplus\g_2$ given by
$$\left(\begin{matrix} \g_0 & \g_1 &\g_1&\g_2 \\\g_{-1} &\g_0&\g_0&\g_1\\\g_{-1}&\g_0&\g_0&\g_{1} \\
\g_{-2} &\g_{-1} & \g_{-1}&\g_0\\\end{matrix}\right),$$ where the subalgebra $\g_-$ is isomorphic to a Heisenberg Lie algebra. Hence we have a contact grading 
(see example \ref{excontact}). 
Note that $\g_0\cong\R\oplus\s\p(2n,\R)$, where $\s\p(2n,\R)$ is the symplectic Lie algebra with respect to the standard symplectic form on $\R^{2n}$.
Let $G=Sp(2n+2,\R)$ be the symplectic Lie group consisting of linear symplectic automorphisms of $(\R^{2n+2}, < , >)$ and let $P\subset G$ be the parabolic subgroup with Lie algebra $\p=\g_0\oplus \g_1\oplus\g_2$ given by the connected component of the identity of all block upper triangular matrices in $G$ with block sizes $1$, $n$, $n$ and $1$.
The corresponding Levi subgroup $G_0$ is given by all the block diagonal matrices in $P$

$$G_0=\left\{\left(\begin{matrix} e & & \\\ & D &  \\
& & e^{-1}\\\end{matrix}\right): D\in Sp(2n,\R), e\in\R_{>0} \right\},$$
where $Sp(2n,\R)$ is the symplectic Lie group wit respect to the standard symplectic form on $\R^{2n}$.
A regular infinitesimal flag structure of type $(G, P)$ on a manifold $M$ consists of a contact structure $TM=T^{-2}M\supset T^{-1}M=:H$ and a reduction $\G_0$ of the structure group of $\mathcal P(\gr(TM))$ via $Ad: G_0\rightarrow Aut_{gr}(\g_-)$. It is easy to see that $G_0$ can be identified via $Ad$ with the subgroup consisting of those automorphisms in $Aut_{gr}(\g_-)$, which in addition preserve an orientation on $\g_-$. Therefore a regular infinitesimal flag manifold $(M, H, \G_0)$ of type $(G,P)$ is just an oriented contact manifold.
\\Recall that for an orientable contact manifold $(M,H)$, there exists a globally defined \textit{contact form}, i.e. a section $\alpha\in\Gamma(T^*M)$ such that $\ker(\alpha)=H$. 
It is unique up to multiplication by a nowhere vanishing function and $\alpha\wedge(d\alpha)^n$ is a volume form on $M$. Note that the choice of a contact from reduces the frame bundle of $\gr(TM)$ further to $G_0^{ss}=Sp(2n,\R)$.
Moreover, it is well known that a contact form $\alpha$ gives rise to a unique vector field $r$  such that $\alpha(r)=1$ and $i_rd\alpha=d\alpha(r,_-)=0$. It is called the \textit{Reeb vector field} associated to $\alpha$. In particular, $\alpha$ induces a splitting of the filtration of the tangent bundle $TM\cong \gr(TM)$ given by $\xi\mapsto (\xi-\alpha(\xi)r,\alpha(\xi)).$
\\Now suppose that $(M,H, \G_0)$ is an oriented contact manifold. Further, assume that we have chosen a contact form $\alpha$ inducing the given orientation on $M$ and let $\G_0^{ss}$ be the corresponding reduction to $Sp(2n,\R)$. Moreover, we identify $TM$ and $\gr(TM)$ via the isomorphism induced from the associated Reeb vector field.
Hence Theorem \ref{main1} applies to geometric structures of the form $(M, H, \G_0^{ss})$ and Corollary \ref{bound} reads as follows:

\begin{cor}\label{contactcor}
Suppose that $\E$ is an irreducible representation of $Sp(2n,\R)$ and let $r>0$ be an integer.
Then for every linear differential operator $D:\Gamma(E)\rightarrow\Gamma(S^r H^*\circledcirc E)$ of weighted order $r$ with symbol given by the natural projection
$$\sigma(D):\mathcal U_{-r}(\ggr(TM))^*\otimes E\rightarrow S^r H^*\circledcirc E.$$
 the dimension of the solution space of the associated linear system $Ds=0$ is bounded by the dimension of the irreducible $Sp(2n+2,\R)$-representation $\V[\E,r]$.
\end{cor}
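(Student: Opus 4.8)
The plan is to read this off from Corollary \ref{bound} once the contact grading of $\g=\s\p(2n+2,\R)$ has been recognised as an instance of the situation in Section \ref{onedim}. First I would check the hypotheses: from the block form of $\g$ above one sees that $\g_0\cong\R\oplus\s\p(2n,\R)$, so $\z(\g_0)$ is one dimensional, and $G=Sp(2n+2,\R)$ with the parabolic $P$ described above is a group of the kind considered there. The auxiliary data required by Theorem \ref{main1}, namely a principal $G_0^{ss}$-connection on $\G_0^{ss}\to M$ and a splitting of the tangential filtration $TM\cong\gr(TM)$, are available: any principal connection serves, and the Reeb vector field of the chosen contact form $\alpha$ furnishes the splitting, as recalled above.

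Next I would identify the bundle $F=\circledcirc^r\gr_{-1}(TM)^*\circledcirc E$ of Theorem \ref{main1} with the bundle $S^rH^*\circledcirc E$ appearing in the statement. Here $\gr_{-1}(TM)=H$, and by Lemma \ref{g_1} the $\g_0^{ss}=\s\p(2n,\R)$-module $\g_{-1}\cong\R^{2n}$ is irreducible with highest weight the first fundamental weight. For the symplectic group the symmetric powers of the standard representation are already irreducible --- the invariant bilinear form lives in $\Lambda^2\g_{-1}^*$, not in $S^2\g_{-1}^*$, so no contraction is available --- whence $\circledcirc^r\g_{-1}^*=S^r\g_{-1}^*$ and therefore $\circledcirc^r\gr_{-1}(TM)^*\circledcirc E=S^rH^*\circledcirc E$. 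In particular the natural projection (\ref{naturalprojection}) coincides with the symbol $\sigma(D)$ of the statement, so the linear operators $D$ considered here are exactly those to which Theorem \ref{main1} and Corollary \ref{bound} apply.

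With these identifications in place, Corollary \ref{bound} gives the claim directly: for a linear $D$ of weighted order $r$ with this weighted symbol one obtains the natural vector bundle $V=V_0\oplus\dots\oplus V_N\cong\G_0^{ss}\times_{G_0^{ss}}\V[\E,r]$ together with a linear connection $\tilde\nabla+C$ on $V$ whose parallel sections are in bijection with the solutions of $Ds=0$; since a parallel section is determined by its value at one point, $\dim\{s:Ds=0\}$ is at most the rank of $V$, which is $\dim\V[\E,r]$, and $\V[\E,r]$ is the irreducible $Sp(2n+2,\R)$-representation furnished by Proposition \ref{V}. The only step carrying genuine representation-theoretic content is the identity $\circledcirc^r\g_{-1}^*=S^r\g_{-1}^*$ for $\s\p(2n,\R)$; this is classical, so I expect no real obstacle, and everything else is bookkeeping with the structures set up in Section \ref{AppCon}.
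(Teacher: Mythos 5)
Your proposal is correct and follows essentially the same route as the paper: Section \ref{AppCon} verifies exactly these hypotheses (one-dimensional $\z(\g_0)$, the reduction to $G_0^{ss}=Sp(2n,\R)$ via a contact form, the Reeb splitting) and then invokes Corollary \ref{bound} directly. The one detail you make explicit that the paper leaves tacit --- the identification $\circledcirc^r\g_{-1}^*=S^r\g_{-1}^*$ because symmetric powers of the standard symplectic representation are irreducible --- is correct and is indeed the point needed to match $F=\circledcirc^r\gr_{-1}(TM)^*\circledcirc E$ with $S^rH^*\circledcirc E$.
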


\begin{ex}
Choosing a principal connection on $\G_0^{ss}$, we obtain linear connections on all associated vector bundles. In particular, we get a partial linear connection $\nabla:\Gamma(H^*)\rightarrow\Gamma(H^*\otimes H^*)$ on $H^*$. Now consider the following linear differential operator $D:\Gamma(H^*)\rightarrow\Gamma(S^{r+1}H^*)$ given by
$$D: s_b\mapsto\nabla_{(a_1}\nabla_{a_2}.....\nabla_{a_r}s_{b)},$$ where one symmetrises over all the indices in the round bracket. It is a differential operator of weighted order $r$ with symbol given by the natural projection $\mathcal U_{-r}(\gr(TM))^*\otimes H^*\rightarrow S^r H^*\circledcirc H^*=S^{r+1}H^*$. By Corollary \ref{contactcor} the dimension of the solution space of $D(s)=0$ is bounded by the dimension of $\V[\g_{-1}^*,r]$.
Using elementary representation theory one immediately computes that
$$\dim(\V[\g_{-1}^*,r])=\frac{(2n+r)!(2n)!r(2n+2+r)}{(r+1)!(2n+1)!(2n-1)!}.$$
\end{ex}

\begin{ex}
Choosing a principal connection on $\G_0^{ss}$ induces partial connections on $H^*$ and $S^tH^*$, which we denote both by $\nabla$.
Now consider the linear differential operator $D:\Gamma(S^tH^*)\rightarrow\Gamma(S^{t+r}H^*)$ given by
$$s_{b_1...b_t}\mapsto \nabla_{(a_1}\nabla_{a_2}...\nabla_{a_r}s_{b_1....b_t)},$$ where one symmetrises over all the indices in the round bracket. It is of weighted order $r$ and the weighted symbol is given by the projection $\U_{-r}(\gr(TM))^*\otimes S^tH^*\rightarrow S^{r}H^*\circledcirc S^{t}H^*=S^{t+r}H^*$.
By Corollary \ref{contactcor} its solution space is bounded by the dimension of $\V[S^t\g_{-1}^*,r]$ and one directly computes that
$$\dim(\V[S^t\g_{-1}^*,r])=\frac{(r+t+2n-1)!(2n+t-1)!r(r+2t+2n)}{(r+t)!t!(2n+1)!(2n-1)!}.$$
\end{ex}

\subsection{Semi-linear systems on regular infinitesimal flag manifolds corresponding to $|k|$-gradings such that $\dim(\z(\g_0))>1$}\label{general}

Suppose that $\g=\g_{-k}\oplus ....\oplus\g_0\oplus...\oplus\g_k$ is a complex $|k|$-graded semisimple where the center $\z(\g_0)$ of the Levi subalgebra has dimension $d>1$. Let $G$ be a simply connected Lie group with Lie algebra $\g$ and $P\subset G$ be a parabolic subgroup corresponding to the grading on $\g$ with Levi subgroup $G_0$ . Further, assume that $M$ is a manifold endowed with a geometric structure $(\mathcal G_0^{ss},\{T^iM\})$ of type $(G,P)$ as described in Section \ref{RegInfFlag}.
\\From Lemma \ref{g_1} we know that $\g_{-1}$  decomposes as $\g_0^{ss}$-module into irreducible as follows
$$\g_{-1}=\bigoplus_{j\in J}\g_{-1,j}$$
where $\g_{-1,j}$ is the irreducible representation with highest weight $-\alpha_j\in\Sigma_\p$ and $J$ as in Lemma \ref{g_1}. Recall that the number of elements of $J$ is $d$. 
\\Suppose that $\E$ is an irreducible representation of $G_0^{ss}$ and for $j\in J$ fix an element $r_j\in\N$. If the restriction of $\lambda=\sum_{i\in I\backslash J}a_i\omega_{\alpha_i}$ to $\h_0$ is the highest weight of $\E^*$, we define $\V$ as the irreducible representation of $\g$, which is dual to the irreducible representation with highest weight $\lambda+\sum_{j\in J}(r_j-1)\omega_{\alpha_j}\in\h^*$. Using Theorem \ref{Kostant}, one shows directly that $\V$ satisfies that

\begin{equation}
H^0(\g_-,\mathbb V)=\mathbb E \quad\textrm{ and }\quad H^1(\g_-,\mathbb V)=\bigoplus_{j\in J}\circledcirc^{r_j}\g_{-1,j}^*\circledcirc\mathbb E.
\end{equation}

By relabelling we assume that $ H^1(\g_-,\mathbb V)=\bigoplus_{j=1}^d\circledcirc^{r_j}\g_{-1,j}^*\circledcirc\mathbb E$ with $r_1\leq...\leq r_d$.
Again, decomposing $\V$ with respect to the action of the grading element in $\z(\g_0)$ one obtains that
$$\V=\V_0\oplus...\oplus\V_N \quad\textrm{ such that } \quad\V_0=\E\quad\textrm{ and }\quad\g_j\V_i\subset \V_{i+j}.$$
Using Theorem \ref{Kostant} one deduces that $\ker(\square_{r_j})\cong \circledcirc^{r_i}\g_{-1,j}^*\circledcirc\mathbb E$ sits inside $\g_{-1,j}^*\otimes\V_{r_j-1}$ and  analogously as in Proposition \ref{U_iV_0} one therefore deduces that the $G_0$-equivariant maps

\begin{equation}\label{phi}
\phi_i: \V_i\rightarrow \U_{-i}(\g_-)^*\otimes \V_0 \quad \textrm{ given by }\quad v\mapsto(u\mapsto-u^{\top}v)
\end{equation} 
are inclusions, which are even isomorphisms provided that $i<r_1$.

\begin{rem}\label{Kr_j}
Denoting by $\K^{r_j}$ the kernel of the $G_0^{ss}$-equivariant projection 
\begin{equation}\label{general2}
\U_{-r_j}(\g_-)^*\otimes\E\rightarrow S^{r_j}\g_{-1}^*\otimes \E\rightarrow S^{r_i}\g_{-1,j}^*\otimes \E\rightarrow \circledcirc^{r_j}\g_{-1,j}^*\circledcirc\mathbb E.
\end{equation} one can prove as in Section \ref{onedim} that there are $G_0^{ss}$-equivariant isomorphisms
$$\phi_{i}: \V_i\cong \U_{-i}(\g_-)^*\otimes \E \quad\textrm{ for } 0\leq i<r_1$$
$$\phi_{i}: \V_i\cong \U_{-i}(\g_-)^*\otimes \E\cap(\U_{-(i-r_1)}(\g_-)^*\otimes\K^{r_1}) \quad\textrm{ for } r_1\leq i<r_2$$
$$:$$
$$\phi_i: \V_i\cong \U_{-i}(\g_-)^*\otimes \E\cap(\U_{-(i-r_1)}(\g_-)^*\otimes\K^{r_1})\cap...\cap (\U_{-(i-r_d)}(\g_-)^*\otimes\K^{r_d}) \textrm{ for } r_{d}\leq i\leq N.$$
\end{rem}
Choosing a principal connection $\nabla$ on $\G_0^{ss}$ and a splitting of the filtration of the tangent bundle $TM\cong\gr(TM)$, one may define a linear connection $\tilde\nabla$ of homogeneity $\geq 0$ on $V$ by $\tilde\nabla:=\nabla+\partial$ and constructs in the same way as in Section \ref{onedim} an operator $L:\Gamma(V_0)\rightarrow\Gamma(V)$ of weighted order $N$, which is characterised by the same properties as the analogous operator in Proposition \ref{Splittingop}. It is even given by the same formula (\ref{Lformula}). The same reasoning as in the proof of Proposition \ref{Splittingop} then shows that the composition $\sigma(L^i)\circ \phi_i$ equals $-id$ and hence $L^i$ induces a surjective vector bundle map

\begin{equation}\label{Lagain}
L^i: \J^i(V_0)\rightarrow V_0\oplus...\oplus V_i, 
\end{equation}
which is an isomorphism provided that $i<r_1$.
\\Denote by $\gr_{-1,j}(TM)$ the natural vector bundle corresponding to the irreducible representation $\g_{-1,j}$ and consider the linear differential operator given by
$$D^{\nabla}=(D_1^{\nabla},...,D_d^{\nabla}): \Gamma(E)\rightarrow\Gamma(\circledcirc^{r_1} \gr_{-1,1}(TM)^*\circledcirc E\oplus...\oplus\circledcirc^{r_d} \gr_{-1,d}(TM)^*\circledcirc E),$$
where $$D_j^{\nabla}:=-id\otimes\phi_{r_j-1}\circ\pi_j\circ\tilde\nabla\circ L: \Gamma(E)\rightarrow \Gamma(\circledcirc^{r_j} \gr_{-1,j}(TM)^*\circledcirc E)$$ and
$\pi_j: \gr(TM)^*\otimes V\rightarrow\gr_{-1,j}(TM)^*\otimes V_{r_j-1}\rightarrow\ker(\square_{r_j})$ is the natural projection.
The operator $D_j^{\nabla}$ is of weighted order $r_j$ with weighted symbol given by the natural projection $\U_{-r_j}(\gr(TM))^*\otimes E\rightarrow\circledcirc^{r_j} \gr_{-1,j}(TM)^*\circledcirc E$ induced by (\ref{general2}). We set $F_{j}:=\circledcirc^{r_j} \gr_{-1,j}(TM)^*\circledcirc E$ and $F:=F_{1}\oplus...\oplus F_{d}$.
\\If $D=(D_1,...,D_d): \Gamma(E)\rightarrow\Gamma(F_{1}\oplus...\oplus F_{d})$ is a differential operator, which differs from $D^{\nabla}$ by a bundle homomorphism
$\J^{r_1-1}(E)\rightarrow F$, then the isomorphism $L^{r_1-1}:\J^{r_1-1}(E)\cong V_0\oplus...\oplus V_{r_1-1}$ can be used to rewrite the system $D(s)=0$ step by step as in Section \ref{onedim} into a system of the form $\tilde\nabla(\Sigma)+C(\Sigma)=0$ for a bundle map $C: V\rightarrow T^*M\otimes V$.

\begin{rem}
Suppose that $D=(D_1,...,D_d): \Gamma(E)\rightarrow\Gamma(F_{1}\oplus...\oplus F_{d})$ is a linear differential operator such that $D_j$ is of the same weighted order and has the same weighted symbol as $D^{\nabla}_j$, then the approach of this article will  in general not apply straightforward, since $L^{i}$ is an isomorphism only for $i<r_1$. However, one may exploit things a bit further. Let us explain this in the case $d=2$:
\\For a linear differential operator $\phi:\J^r(W)\rightarrow\bar W$ of weighted order $r$ between vector bundles $W$ and $\bar W$ over a filtered manifold $M$, there is a general notion of its $\ell$-th prolongation $p_{\ell}(\phi): \J^{r+\ell}(W)\rightarrow \J^{\ell}(\bar W)$, which is given by $p_{r+\ell}(\phi)(j^{r+\ell}_xs)=j^{\ell}_x\phi(j^rs)$, see \cite{Morimoto3} or also \cite{N}. Correspondingly, the $\ell$-th prolongation $Q^{r+\ell}$ of the equation $Q^r:=\ker(\phi)$ associated to the operator $\phi$ is given by the kernel of $p_{\ell}(\phi)$. 
Note that a section $s$ of $W$ is a solution of $Q^r$, i.e. $\phi(j^rs)=0$, if and only if $s$ is a solution of $Q^{r+\ell}$ for all $\ell\geq 0$. In general $Q^{r+\ell}$ need not to be a vector bundle, but generically this will be the case.
\\Now by assumption on $D=(D_1,D_2)$, we may write 
$$D_1(s)=D_1^{\nabla}(s)+\psi_1(j^{r_1-1}s)\quad \textrm{ and } \quad D_2(s)=D_2^{\nabla}(s)+\psi_2(j^{r_2-1}s),$$ 
for some vector bundle maps $\psi_1: \J^{r_1-1}(E)\rightarrow F_1$ and  $\psi_2: \J^{r_2-1}(E)\rightarrow F_2$.
\\If $r:=r_1=r_2$, we know that $L^{r-1}$ defines an isomorphism $\J^{r-1}(E)\cong V_0\oplus...\oplus V_{r-1}$ and we can find vector bundle maps $A_i: V\rightarrow F_i$ such that $D_i(s)=D^{\nabla}_i(s)+A_i(Ls)$ for $i=1,2$. Hence the prolongation procedure of Section \ref{onedim} can be applied without problems to rewrite $D(s)=0$ into a system of the form $\tilde\nabla(\Sigma)+C(\Sigma)=0$.  
\\If $r_1<r_2$, the operator $L^{r_1-1}$ still defines an isomorphism $\J^{r_1-1}(E)\cong V_0\oplus...\oplus V_{r_1-1}$ and we can at least find a vector bundle map $A_1: V\rightarrow F_1$ such that $D_1(s)=D^{\nabla}_1(s)+A_1(Ls)$. Now let $Q^{r_1+\ell}\subset \J^{r_1+\ell}(E)$ be the $\ell$-th prolongation of $D_1(s)=0$ 
and denote by $P^{i}$ the kernel of the vector bundle map $L^{i}: \J^{i}(E)\rightarrow V_0\oplus....\oplus V_{i}$. 
We claim that $Q_x^{r_{2}-1}\cap P_x^{r_2-1}=\{0\}$ for all $x\in M$:
\\In fact, suppose that $j^{r_2-1}_xs\in Q_x^{r_{2}-1}\cap P_x^{r_2-1}$. The diagram (\ref{DiaSplit}) still holds and hence it follows that $j^i_xs\in P_x^{i}$ for all $i\leq r_2-1$. In particular, we have $j^{r_1}_xs\in P_x^{r_1}$ and, since $P_x^{r_1-1}=\{0\}$ by (\ref{Lagain}), this implies that $j^{r_1-1}_xs=0$. Hence $j^{r_1}_xs$ is an element in 
$\U_{-r_1}(\gr(T_xM))^*\otimes E_x\subset \J^{r_1}_x(E)$, which lies in the kernel of the weighted symbol of $L^{r_1}$. On the other hand, since $j^i_xs\in Q^{i}_x$ for all $i$, we obtain that
$j^{r_1}_xs$ also lies in the kernel of the weighted symbol of $D^1$. Since $\sigma(L^{r_1})\circ\phi_{r_1}=-id$, we deduce from remark \ref{Kr_j} that that the intersection of the kernels of these weighted symbols is zero and so $j^{r_1}_xs=0$. Hence $j^{r_1+1}_xs$ is an element of $\U_{-r_1-1}(\gr(T_xM))^*\otimes E_x$. In addition, it has to be in the kernel of the weighted symbol of $L^{r_1+1}$ and in the the kernel of the weighted symbol of the the first prolongation $p_1(D_1)$. In \cite{N} it was shown that the kernel of $\sigma_x(p_1(D_1))$ coincides with the intersection
$$\U_{-r_1-1}(\gr(T_xM))^*\otimes E_x\cap(\U_{-1}(\gr(T_xM))^*\otimes K^{r_1}_x).$$ Hence from $\sigma(L^{r_1+1})\circ\phi_{r_1+1}=-id$ and remark \ref{Kr_j} one concludes again that the intersection of the two kernels is zero and so $j^{r_1+1}_xs=0$. 
Since the kernel of $\sigma_x(p_\ell(D_1))$ equals $ \U_{-r_1-\ell}(\gr(T_xM))^*\otimes E_x\cap(\U_{-\ell}(\gr(T_xM))^*\otimes K^{r_1}_x)$, see \cite{N}, one shows in this way step by step that $j^{r_2-1}_xs=0$, which proves the claim.
\\Therefore the operator $L^{r_2-1}$ induces a fiberwise injective map 
\begin{equation}\label{Lisorem}
L^{r_2-1}|_{Q^{r_2-1}}: Q^{r_2-1}\rightarrow V_0\oplus...\oplus V_{r_2-1}, 
\end{equation}
which is a injective vector bundle map, if $Q^{r_2-1}$ is a vector bundle.
\\Choosing a splitting of this injection, shows that we can find a map $A_2: V\rightarrow F_2$ such that 
$$D(s)=0 \quad \textrm{ if and only if } \quad D_1^\nabla(s)+A_1(Ls)=0 \textrm{ and } D_2^\nabla(s)+A_2(Ls)=0,$$
since $D_1^\nabla(s)+A_1(Ls)=0$ implies that $j^{r_2-1}_xs\in Q^{r_2-1}_x$ for all $x\in M$.
The map $A=A_1+A_2$ is of homogeneity $\geq 1$ and so the prolongation procedure of Section \ref{onedim} can be applied to rewrite $D(s)=0$ into a system of the form $\tilde\nabla(\Sigma)+C(\Sigma)=0$.  
\\Finally, let us remark that if $Q^{i}$ is a vector bundle for all $i\leq r_2-1$ and all maps $Q^{i}\rightarrow Q^{i-1}$, which are induced from the projections $\pi^{i}_{i-1}: \J^i(E)\rightarrow \J^{i-1}(E)$, are surjective, then (\ref{Lisorem}) is even an isomorphism.
\end{rem}

\end{document}